\newtheoremstyle{montheoreme}
{}
{}
{\itshape}
{}
{\bf}
{.}
{.5em}
{}
\newtheoremstyle{maremarque}% name
  {}%				Space above
  {}% 			Space below
  {}%				Body font
  {}%				Indent amount (empty = no indent, \parindent = para indent)
  {\sl}%			Thm head font
  {.}%			Punctuation after thm head
  {.5em}%			Space after thm head: " " = normal interword space;
\newtheoremstyle{manotation}% name
  {}%				Space above
  {}% 			Space below
  {}%				Body font
  {}%				Indent amount (empty = no indent, \parindent = para indent)
  {\bf}%			Thm head font
  {.}%			Punctuation after thm head
  {.5em}%			Space after thm head: " " = normal interword space;
\newtheoremstyle{monexo}% name
  {}%				Space above
  {}% 			Space below
  {}%				Body font
  {}%				Indent amount (empty = no indent, \parindent = para indent)
  {\sl}%			Thm head font
  {}%			Punctuation after thm head
  {.5em}%			Space after thm head: " " = normal interword space;
\theoremstyle{montheoreme}
\newtheorem{defn}{Definition}[section]
\newtheorem{thm}[defn]{Theorem}
\newtheorem{prop}[defn]{Proposition}
\newtheorem{lem}[defn]{Lemma}
\newtheorem{cor}[defn]{Corollary}
\theoremstyle{maremarque}
\newtheorem{rmq}[defn]{Remark}
\theoremstyle{manotation}
\newtheorem*{nota}{Notation}
\theoremstyle{monexo}
\newtheorem{exple}[defn]{Example}
\definecolor{bordeaux}{rgb}{.545,0,0}
\title{Categorification of the Kazhdan-Lusztig basis of the Temperley-Lieb algebra by bimodules}
\author{Thomas Gobet}
\begin{document}
\maketitle
\begin{abstract}
We realize the Temperley-Lieb algebra by analogues of Soergel bimodules. The key point is that the monoidal structure is not given by a usual tensor product but by a slightly more complicated operation.

\end{abstract}
\tableofcontents
\section*{Introduction}
The purpose of this work was to try to realize the Temperley-Lieb algebra in type $A$ by bimodules, motivated by unexplained positivity properties in this algebra.

The category of Soergel bimodules defined in \cite{S} categorifies the Kazhdan-Lusztig basis of the Hecke algebra of any Coxeter system of finite rank. A diagrammatic categorification of the Temperley-Lieb category obtained by taking a quotient of (a diagrammatic version of) the category of Soergel bimodules in type $A$ was described in \cite{BE}. Such a quotient category is a priori not a category whose objects can be viewed as bimodules anymore, but Elias gives some indications that there could exist such a realization of it by considering quasi-coherent sheaves on \textit{Weyl lines}, that is, one dimensional subspaces of the geometric representation of the Coxeter group that are intersections of reflection hyperplanes. A natural framework for this is the analogues of Soergel bimodules that are suggested by Elias in \cite{BE}. As he noticed, such bimodules are not free anymore as left or right modules over the algebra of regular functions on the union of all Weyl lines.  

Writing $Z$ for the union of all the Weyl lines viewed as a subvariety of the geometric representation we are able to realize the Temperley-Lieb algebra as a monoidal category of graded $\bar{R}$-bimodules where $\bar{R}$ is the algebra of regular functions on $Z$ by considering a slightly more complicated operation than a usual tensor product: given two graded $\bar{R}$-bimodules $B, B'$, one can consider the right, resp. left annihilators of $B'$, resp. $B$ and associate to each of them the corresponding varieties $V_{B}^r$, $V_{B'}^\ell\subset Z$. We then define a product of bimodules by setting
$$B\ast B'=B\otimes_{\bar{R}}\mathcal{O}(V_{B}^r\cap V_{B'}^\ell)\otimes_{\bar{R}} B',$$ where $\mathcal{O}(-)$ stands for the algebra of regular functions. Unfortunately such a product is neither additive nor associative on the category of finitely generated graded $\bar{R}$-bimodules but it will be associative when restricted to a suitable stable class of bimodules containing some special bimodules called \textit{fully commutative} together with some of their sums and shifts; proving that the fully commutative bimodules are indecomposable, which is a long combinatorial argument, will allow us to extend our product to direct sums of shifts of fully commutative bimodules by bilinearity. Setting $B_i=\mathcal{O}(V_i)\otimes_{\mathcal{O}(V_i)^{s_i}} \mathcal{O}(V_i)$ for the analogue of the Soergel bimodule, where $V_i$ stands for the union of the Weyl lines not included in the reflecting hyperplane of $s_i$ where $s_i$ is the simple transposition $(i, i+1)$, we give a categorification theorem for the Temperley-Lieb algebra (theorem \ref{thm:fin}); the bimodule $B_i$ corresponds to the element $b_i$ of the Kazhdan-Lusztig basis of the Temperley-Lieb algebra and the $\ast$-product of bimodules to the multiplication in the Temperley-Lieb algebra.  

To be able to compute the $\ast$-product of bimodules $B_i$ we need to understand inductively their annihilators ; it turns out that given a bimodule $B_w=B_{i_1}\ast\cdots\ast B_{i_k}$ where $w=s_{i_1}\cdots s_{i_k}$ where one can pass from any reduced expression of $w$ to any other only by commutation relations (such elements of the Weyl group turn out to index the Kazhdan-Lusztig basis of the Temperley-Lieb algebra and are usually called \textit{fully commutative} or \textit{braid avoiding}), the left and right varieties corresponding to the left and right annihilators of $B_w$ can be characterized by two subsets of pairwise commuting reflections of the Weyl group, which turn out to be exactly the two sets obtained in the realization of the Temperley-Lieb algebra by planar diagrams by considering the diagram associated to the element $b_{i_1}\cdots b_{i_k}$ after removing the lines going from the top to the bottom of the diagram (that is, keeping only the half circles and viewing them as reflections by numbering the points from the left to the right). Hence this also gives some categorical interpretation of the realization of the Temperley-Lieb algebra by planar diagrams (proposition \ref{prop:digne}). \\
\\
Stroppel obtained in \cite{Strop} a categorification of the Temperley-Lieb algebra by considering projective functors on the principal block of graded parabolic versions of the BGG category $\mathcal{O}$; due to the relationship between Soergel bimodules and projective functors on category $\mathcal{O}$ (see \cite{Soergel}, Korollar $1$), we can expect our categorification to be related to the one obtained in \cite{Strop}.\\
\\
\textit{Organization of the paper}. Section $1$ gives some basic results on Weyl lines and introduces varieties and sets of reflections defined inductively, which will correpond to varieties associatied to left and right annihilators of bimodules. Section $2$ gives some results on graded bimodules as well as on analogues of Soergel bimodules considered here and introduces the product of bimodules; we show that when restricted to a suitable class of bimodules this product turns out to be associative. Section $3$ gives the realization of the Temperley-Lieb algebra by analogues of Soergel bimodules; for this we need to show the Temperley-Lieb relations and in order have a way to extend our product to direct sums of fully commutative bimodules we need to show that these are indecomposable.\\ 
\\
\textbf{Acknowledgement}. I thank my advisor François Digne at the University of Picardy, Amiens for his suggestions and for reading preliminary versions of this paper.  
\section{Combinatorics of Weyl lines}
The Coxeter systems $(\mathcal{W}, S)$ considered will always be of type $A$ unless otherwise specified, identifying $\mathcal{W}$ of type $A_n$ with the symmetric group on $n+1$ letters and $S$ with the set of simple transpositions $s_i=(i, i+1)$ for all $i=1,\dots, n$. We will denote by $T$ the set of reflections, by $H_t$ the reflecting hyperplane of $t\in T$ and by $V$ the geometric representation over a field $k$ of characteristic zero, which is reflection faithful in the sense of \cite{S}.
\subsection{Weyl lines}
\begin{defn}
A \emph{\textbf{Weyl line}} is a subspace of $V$ of dimension $1$ that is the intersection of reflection hyperplanes. A Weyl line is \textbf{transverse} to some reflection $t\in T$ if it is not contained in $H_t$. 
\end{defn}

We denote by $Z$ the union of all Weyl lines in $V$, which is a $\mathcal{W}$-stable subvariety of $V$. We write $V_t$ for the union of Weyl lines transverse to $t$ as a subvariety of $Z\subset V$ ; if the reflection is simple we will often write $V_i$ to mean $V_{s_i}$. 

\begin{lem}\label{lem:parabolic}
There exists a bijection
\begin{equation*}
\left\{\begin{array}{cccccc}\text{Weyl lines in $V$}\end{array}\right\} \overset{\sim}{\longrightarrow}  \left\{\begin{array}{cccccc} \text{partitions of $\{1,\dots,n+1\}$ into two subsets}  
\end{array}\right\},
\end{equation*}
which to any Weyl line $L=\bigcap_{i=1}^{n-1} H_{t_i}$, where $H_{t_i}$ is the reflection hyperplane of $t_i\in T$, associates the partition given by the decomposition of $t_1\cdots t_{n-1}$ into disjoint cycles (which turns out to be a partition in two sets as the proof will show).
\end{lem}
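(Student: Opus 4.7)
The plan is to reduce the statement to the classical identification of intersection flats in the type $A_n$ reflection arrangement with set partitions. In the geometric representation, the reflection $t=(i,j)\in T$ has hyperplane $H_{t}=\{x_i=x_j\}$, so an arbitrary intersection of reflection hyperplanes is cut out by equations of the form $x_i=x_j$ and is therefore determined by an equivalence relation on $\{1,\dots,n+1\}$. In these terms the flat associated to a partition with $k$ blocks has dimension $k-1$, so a Weyl line (dimension $1$) corresponds to a partition of $\{1,\dots,n+1\}$ into exactly two nonempty blocks. This already identifies the two sides of the claimed bijection; the content of the lemma is that the explicit formula $t_1\cdots t_{n-1}$ recovers this partition.

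Given $L=\bigcap_{i=1}^{n-1} H_{t_i}$, I would form the graph $\Gamma$ on the vertex set $\{1,\dots,n+1\}$ whose edges are the transpositions $t_i$ regarded as $2$-element subsets. By the identification above, the partition associated to $L$ is exactly the partition into connected components of $\Gamma$. Since $L$ has dimension $1$, $\Gamma$ has exactly two connected components, and having $n+1$ vertices, $n-1$ edges and two components forces $\Gamma$ to be a forest consisting of two trees, one spanning each block $A, B$ of the partition. So the $t_i$ split into two groups of $|A|-1$ and $|B|-1$ transpositions forming spanning trees of $A$ and $B$ respectively, and transpositions coming from different blocks commute.

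The key combinatorial input is the classical fact that if $t_{i_1},\dots,t_{i_{k-1}}$ are the edges of a spanning tree of a set $C$ of size $k$, then their product (in any order) is a $k$-cycle on $C$; this follows easily by induction, removing a leaf of the tree. Applying this to each of the two trees spanning $A$ and $B$, and using that the two resulting cycles act on disjoint sets and hence commute, I would conclude that $t_1\cdots t_{n-1}$ decomposes into exactly two disjoint cycles, of supports $A$ and $B$. This is precisely the partition associated to $L$, so the map described in the statement is well defined and agrees with the geometric identification of the first paragraph.

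It then remains to note surjectivity and injectivity. Given any partition $\{A,B\}$ of $\{1,\dots,n+1\}$ into two nonempty blocks, choosing any spanning tree of each block produces $n-1$ reflections whose hyperplanes intersect in the corresponding Weyl line, giving surjectivity; injectivity is immediate since the partition determines the flat and vice versa. The main obstacle is really just the tree-product fact; once it is invoked, the rest is bookkeeping in terms of the graph $\Gamma$ and the dimension formula for flats.
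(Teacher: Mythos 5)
Your proof is correct, but it follows a genuinely different route from the paper's. You work explicitly in the standard realization of the geometric representation inside $k^{n+1}$ (identifying it with $\{\sum x_i=0\}$, so that $H_{(i,j)}=\{x_i=x_j\}$), invoke the classical identification of flats of the type $A$ reflection arrangement with set partitions, and then reduce the cycle-decomposition claim to the graph-theoretic fact that the product, in any order, of the edges of a spanning tree of a $k$-element set is a $k$-cycle (proved by leaf removal). The paper instead stays coordinate-free: it uses the reflection length of $w=t_1\cdots t_{n-1}$, the linear independence of the roots of the $t_i$, Carter's description of the parabolic subgroup fixing $L$ (to get well-definedness across different expressions of $L$ as an intersection), and the identity "$T$-length $= n+1 -$ number of cycles" to deduce that $w$ has exactly two cycles (or one cycle and one fixed point). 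What your version buys is concreteness and an elementary, self-contained argument not requiring Carter's results; it also makes the correspondence "connected components of $\Gamma$ $\leftrightarrow$ blocks of the partition" transparent, which automatically handles both well-definedness and bijectivity at once. What the paper's version buys is that it avoids choosing an explicit model for $V$ and frames the argument in terms of parabolic subgroups, which is the language used in the rest of the section (e.g.\ the remark immediately after, identifying Weyl lines with maximal parabolic subgroups). One small thing to make explicit in your write-up: the case where one block is a singleton, so that $t_1\cdots t_{n-1}$ is an $n$-cycle with a fixed point; your spanning-tree count ($|B|-1=0$ edges) silently covers it, but it's worth a sentence since the lemma's phrase "decomposition into disjoint cycles" is being read as including the trivial $1$-cycle there.
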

\begin{proof}
One has to show that the map defined above is well-defined. Suppose $L=\bigcap_{i=1}^{n-1} H_{t_i}$ is a Weyl line in  $V$. The product $w=t_1\cdots t_{n-1}$ has $T$-length equal to $n-1$ since $L$ has dimension $1$ (the set of roots of the $t_i$ consists of linearly independent vectors, which implies that $t_1\cdots t_{n-1}$ is a reduced $T$-decomposition for $w$ ; the parabolic subgroup generated by the $t_i$ is equal to the subgroup of elements of $\mathcal{W}$ fixing $L$; see \cite{Carter}, section $2$). Now the $T$-length of an element of the symmetric group $S_{n+1}$ is equal to $n+1$ minus the number of cycles occuring in the decomposition into disjoint cycles. This forces $w$ as element of $S_{n+1}$ to fix at most $1$ letter. If it fixes exactly one letter $j$, suppose $L$ is written as another intersection of reflecting hyperplanes $\bigcap_{i=1}^{n-1} H_{t_i'}$. Then all the $t_i'$ fix $L$ and hence have to be in the parabolic subgroup of $\mathcal{W}$ generated by the $t_i$. Hence all the $t_i'$ have to fix the letter $j$ and one gets the same partition of $n+1$ into two sets as before. 

If no letter is fixed, write $S_1\cup S_2$ for the disjoint union of the supports of the two cycles. If $L$ is written $\bigcap_{i=1}^{n-1} H_{t_i'}$, then every $t_j'$ has to be in the parabolic subgroup generated by the $t_i$ and since it is a conjugate of some $t_i$ it will either fix $S_1$ or fix $S_2$. Hence we obtain the same partition into two sets as before. 

Now for each partition $S_1\cup S_2$ of $\{1,\dots,n+1\}$ write a corresponding $n$-cycle if either $S_1$ or $S_2$ has cardinal one or write a corresponding product of $2$ cycles if both have cardinal more than $1$ and decompose them in the obvious way as products of $n-1$ reflections. This proves that the above map is surjective. Now if $L\neq L'$ are two different Weyl lines, then one can find some reflecting hyperplane $L'\subset H_s$ such that $L\cap H_s=0$. Then $s$ cannot be in the parabolic subgroup of elements fixing $L$ and hence $L$ and $L'$ will not yield the same cycle decomposition.

\end{proof}
\begin{rmq}
In fact, Weyl lines are in bijection with rank $n-1$ parabolic subgroups (that is, maximal parabolic subgroups, not necessarily standard).
\end{rmq}
\begin{lem}\label{lem:hyperebene}
Let $t, t', t''\in T$ be three distinct reflections not commuting with each other (in particular $t'tt'=tt't=t''$). Then 

$$V_t\cap V_{t'}=V_t\cap H_{t''}=V_{t'}\cap H_{t''}.$$

\noindent In particular $V_t\cap (V_{t'}\cup V_{t''})=V_t$.
\end{lem}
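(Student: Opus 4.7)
The plan is to reduce everything to the combinatorics of bipartitions of $\{1,\dots,n+1\}$ via Lemma \ref{lem:parabolic}. In type $A$, three distinct reflections $t,t',t''$ that do not pairwise commute must share a common letter and hence be the three transpositions on a three-element subset $\{a,b,c\}\subset\{1,\dots,n+1\}$; without loss of generality $t=(a,b)$, $t'=(b,c)$ and $t''=(a,c)$.

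First I would unpack the geometric conditions combinatorially. From the proof of Lemma \ref{lem:parabolic}, the parabolic subgroup fixing a Weyl line $L$ is the direct product of the symmetric groups on the two blocks of its associated bipartition $\{S_1,S_2\}$; consequently $L\subset H_{(i,j)}$ if and only if $i$ and $j$ lie in the same block. Moreover, two distinct Weyl lines meet only at the origin, so every nonzero point of $Z$ lies on a unique Weyl line. Hence a nonzero point belongs to $V_t$ (resp.\ $V_{t'}$) iff its Weyl line is transverse to $t$ (resp.\ $t'$), and to $H_{t''}$ iff its Weyl line is contained in $H_{t''}$.

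Under this dictionary, the three sets in the claimed equality are the unions of Weyl lines (together with the origin) whose bipartition respectively separates both $\{a,b\}$ and $\{b,c\}$; separates $\{a,b\}$ while keeping $\{a,c\}$ together; or separates $\{b,c\}$ while keeping $\{a,c\}$ together. Since there are only two blocks, specifying the relative position of any two of the three pairs among $\{a,b\},\{a,c\},\{b,c\}$ forces the third, and a direct case check shows the three conjunctions are equivalent; this gives the three-fold equality.

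For the \emph{in particular} statement, by the equality just proved and its analogue with $t'$ replaced by $t''$, one has
$$V_t\cap(V_{t'}\cup V_{t''})=(V_t\cap H_{t''})\cup(V_t\cap H_{t'})=V_t\cap(H_{t'}\cup H_{t''}),$$
so it suffices to show $V_t\subset H_{t'}\cup H_{t''}$. But any Weyl line $L$ in $V_t$ separates $a$ from $b$, and whichever of these two blocks contains $c$ forces either $\{b,c\}$ together (so $L\subset H_{t'}$) or $\{a,c\}$ together (so $L\subset H_{t''}$). The only point that needs a second's thought is that $V_t\cap H_{t''}$ is a priori an intersection of varieties, not a union of Weyl lines; but since distinct Weyl lines only meet at $0$, this intersection is exactly the union of the Weyl lines satisfying both conditions, and no further subtlety arises. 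I do not expect a real obstacle: after the translation via Lemma \ref{lem:parabolic} the proof is elementary set theory on two-block partitions of three letters.
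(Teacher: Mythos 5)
Your proof is correct and rests on the same mechanism as the paper's, namely the bijection of Lemma \ref{lem:parabolic} between Weyl lines and two-block partitions of $\{1,\dots,n+1\}$; the paper re-derives the required facts in terms of reflection lengths and cycle decompositions of $t_1\cdots t_{n-1}$, but the underlying combinatorial picture is identical and your phrasing via the explicit dictionary ($L\subset H_{(i,j)}$ iff $i,j$ are in the same block) is if anything cleaner. One small slip in your opening reduction: three pairwise non-commuting transpositions need not share a common letter (e.g.\ $(1,2)$, $(2,3)$, $(1,3)$), and if they do share one they need not lie on a common three-element set (e.g.\ $(1,2)$, $(2,3)$, $(2,4)$); what forces $\{t,t',t''\}$ to be the three transpositions on a single triple $\{a,b,c\}$ is the lemma's parenthetical hypothesis $t''=tt't=t'tt'$, which you use implicitly in the normalization $t=(a,b)$, $t'=(b,c)$, $t''=(a,c)$ and should cite rather than the ``shared letter'' claim. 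With that corrected, the case check on the three pairs and the argument $V_t\subset H_{t'}\cup H_{t''}$ go through as you wrote them.
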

\begin{proof}
Let $L\subset V_t\cap V_{t'}$. By definition, $L=\bigcap_{i=1}^{n-1} H_{t_i}$ for some $t_i\in T$. Since $L$ is transverse to both $t$ and $t'$, $H_{r}\cap\bigcap_{i=1}^{n-1} H_{t_i}=0$ for $r=t, t'$. It follows that $t t_1\cdots t_{n-1}$ and $t' t_1\cdots t_{n-1}$ have reflection length equal to $n$ and hence that they are $(n+1)$-cycles. Since $t, t'$ are non-commuting, there exists distinct letters $i, k, k'\in\{1,\dots,n+1\}$ such that $t=(k,i)$, $t'=(k',i)$. An easy computation then shows that when considering the decomposition of $t_1\cdots t_{n-1}$ as a product of two cycles, the letters $k$ and $k'$ must lie in the same cycle and the letter $i$ must lie in the other cycle. This means that $tt't=(k,k')$ divides $t_1\cdots t_{n-1}$, which implies that $L\subset H_{tt't}=H_{t''}$. Conversely if $L\subset H_{t''}$ and $L\not\subset H_t$ then $L\not\subset H_{t'' t t''}=H_{t'}$ since $L$ is fixed by $t''$.
\end{proof}
\begin{rmq}
Identifying $W$ with the symmetric group and viewing a reflection as a transposition, if $t=(i, k)$ and $t'=(k, j)$ with $j\neq i$, then $V_t\cap V_{t'}$ consists exactly of the Weyl lines corresponding to the maximal parabolic subgroups whose operation on $\{1,\dots, n+1\}$ yields exactly two orbits $S_1$ and $S_2$ with $i, j\in S_1$ and $k\in S_2$. In particular $V_t\cap V_{t'}\neq\{0\}$.
\end{rmq}
\subsection{Noncrossing and dense sets of reflections}

\begin{nota} For $i\leq j$ two indices in $\{1,\dots,n\}$ we write $[i,j]$ for the set $\{i,i+1,\dots,j-1,j\}$.
\end{nota}
\begin{defn}
Two indices $i, j$ in $[1,n]$ are \textbf{distant} if $|i-j|>1$.
\end{defn}
To any sequence $i_1 \cdots i_m$ with $i_j\in\{1,\dots,n\}$ of length at least one, we associate the variety $W_{i_1\cdots i_m}$ built inductively by setting $W_i=V_i$ and 
$$W_{i_1\cdots i_m}=V_{i_1}\cap (W_{i_2\cdots i_m}\cup s_{i_1} W_{i_2\cdots i_m}).$$
These varieties will play a key role later on. We write $\mathcal{V}_n$ for the family of varieties obtained in this way.
\begin{exple}\label{ex:distant}
For $i$ and $j$ with $|j-i|>1$, one has $W_{ij}=V_i\cap V_j=W_{ji}$.
\end{exple}
\begin{exple}\label{ex:voisin}
We have $W_{i(i\pm1)}=V_i\cap(V_{i\pm1}\cup s_i V_{i\pm1})=V_i$ by lemma \ref{lem:hyperebene}.
\end{exple}
\begin{exple} One has $W_{i(i\pm 1)i}=V_i$.
\end{exple}
We will show in proposition \ref{prop:reflexion} that any $W\in\mathcal{V}_n$ can be written as an intersection $\bigcap_{t\in T_W} V_t$ for a unique set $T_W$ with interesting properties. 

\begin{lem}\label{lem:suitecroissante}
Let $j\leq m\leq i$. Then $W_{m(m-1)\cdots j}=V_m$ and $W_{m(m+1)\cdots i}=V_{m}$.
\end{lem}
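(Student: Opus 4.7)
The plan is to prove the two equalities simultaneously by induction on the length of the sequence, namely $m-j+1$ in the first case and $i-m+1$ in the second. The two cases are symmetric (reversing the order and swapping the roles of $j,i$), so it suffices to describe the argument for $W_{m(m-1)\cdots j}=V_m$.

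The base case is $j=m$, for which the sequence reduces to a single index and $W_m=V_m$ holds by the very definition of $W_i$. For the inductive step, assume the statement has been established for all strictly shorter decreasing sequences starting at any integer; in particular for $W_{(m-1)(m-2)\cdots j}$. Unfolding one step of the recursive definition,
$$W_{m(m-1)\cdots j}=V_m\cap\bigl(W_{(m-1)(m-2)\cdots j}\cup s_m W_{(m-1)(m-2)\cdots j}\bigr).$$
By the inductive hypothesis $W_{(m-1)(m-2)\cdots j}=V_{m-1}$, so
$$W_{m(m-1)\cdots j}=V_m\cap(V_{m-1}\cup s_m V_{m-1})=V_m,$$
where the last equality is precisely the content of Example \ref{ex:voisin} (which in turn rests on Lemma \ref{lem:hyperebene}). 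The argument for $W_{m(m+1)\cdots i}=V_m$ is identical after replacing $m-1$ by $m+1$ and using Example \ref{ex:voisin} with the other sign.

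I do not expect any substantial obstacle: the proof is essentially a one-step unfolding of the recursive definition combined with the inductive hypothesis, and the only nontrivial ingredient is the absorption identity $V_m\cap(V_{m\pm 1}\cup s_m V_{m\pm 1})=V_m$ already recorded as Example \ref{ex:voisin}. The only point one must be careful about is checking that the induction bottoms out correctly, which it does thanks to the definitional equality $W_m=V_m$.
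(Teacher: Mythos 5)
Your proof is correct and takes essentially the same route as the paper: induction on the length of the decreasing (or increasing) sequence, with the base case being the definitional identity $W_m=V_m$, the inductive step unfolding the recursion once, and the absorption identity of Example \ref{ex:voisin} closing the argument.
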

\begin{proof}
We prove the first equality by induction on $m-j$, the second being similar. If $m-j=0$ then $W_m=V_m$ by definition. Suppose $m-j>0$. Then
$$W_{m(m-1)\cdots j}=V_m\cap(W_{(m-1)\cdots j}\cup s_m W_{(m-1)\cdots j})$$ and $W_{(m-1)\cdots j}=V_{m-1}$ by induction. Example \ref{ex:voisin} concludes.
\end{proof}
\begin{nota}
For short, if $s_i\in S$ is a simple reflection and $W\subset Z$ a closed subset, we write $s_i\cdot W$ or even $i\cdot W$ for the variety $V_i\cap(W\cup s_i W)$. More generally given any sequence $i_1\cdots i_k$ of indices in $\{1,\dots,n\}$, we write $i_1\cdots i_k\cdot W$ for the variety $s_{i_1}\cdot(s_{i_2}\cdot(\cdots(s_{i_k}\cdot W)\cdots )$.
\end{nota}

\begin{lem}\label{lem:reflexions-recurrence}
Suppose $Q\subset T$ is a set of commuting reflections. Let $s\in T$. Set $W:=\bigcap_{t\in Q} V_t$. Then $W\neq 0$ and $s\cdot W=\bigcap_{t\in Q'} V_t$ where
$$
Q' = \left\{
    \begin{array}{ll}
        Q\cup\{s\} & \mbox{if } st=ts \mbox{ for each } t\in Q\\
        (Q\backslash t)\cup\{s\} & \mbox{if } \exists! t\in Q \mbox{ such that } st\neq ts\\
        (Q\backslash\{t, t'\})\cup\{s, tt'st't\} & \mbox{if } \exists t\neq t'\in Q \mbox{ such that } st\neq ts, st'\neq t's.
    \end{array}
\right.
$$ 
and $Q'$ is also commuting. In particular $s\cdot W\neq 0$.
\end{lem}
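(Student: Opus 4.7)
The plan is to combine the partition description of Weyl lines from Lemma \ref{lem:parabolic} with the triangle identity of Lemma \ref{lem:hyperebene}. First, a pairwise commuting set of type $A$ reflections consists of pairwise disjoint transpositions; by Lemma \ref{lem:parabolic} the intersection $\bigcap_{t\in Q}V_t$ is nonempty if and only if there is a 2-partition of $\{1,\dots,n+1\}$ separating every pair in $Q$. Since a matching is bipartite, such a partition always exists, so $W\neq 0$. Writing $s=(a,b)$, any non-commuting $t=(c,d)\in Q$ must share exactly one letter with $\{a,b\}$, and since $Q$ is disjoint, at most two such $t$ exist, one sharing $a$ and one sharing $b$. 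This justifies that the three listed cases exhaust all possibilities.

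In Case 1, $s$ commutes with every $t\in Q$, hence $sV_t=V_{sts^{-1}}=V_t$, so $sW=W$ and $s\cdot W=V_s\cap W$, as stated. In Case 2, let $t$ be the unique non-commuter and set $W_0=\bigcap_{r\in Q\setminus\{t\}}V_r$; since $s$ commutes with every element of $Q\setminus\{t\}$, $sW_0=W_0$, and
$$W\cup sW=W_0\cap(V_t\cup V_{sts^{-1}}).$$
The reflections $s,t,sts^{-1}$ form a non-commuting triangle of the type considered in Lemma \ref{lem:hyperebene}, which gives $V_s\cap(V_t\cup V_{sts^{-1}})=V_s$. Hence $s\cdot W=V_s\cap W_0$, the announced formula.

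Case 3 is the main point. Up to exchanging $t,t'$, we may write $t=(a,c)$ and $t'=(b,d)$ with $c\neq d$ both distinct from $a,b$ (forced by $t,t'$ being disjoint and each sharing one letter with $\{a,b\}$). A direct computation yields $u:=tt'st't=(c,d)$, together with $\tilde t:=sts^{-1}=(b,c)$ and $\tilde{t'}:=st's^{-1}=(a,d)$. Setting $W_0=\bigcap_{r\in Q\setminus\{t,t'\}}V_r$, the commutations give $sW_0=W_0$, hence
$$W\cup sW=W_0\cap\bigl((V_t\cap V_{t'})\cup(V_{\tilde t}\cap V_{\tilde{t'}})\bigr).$$
Using Lemma \ref{lem:parabolic}, any Weyl line $L\in V_s$ induces a 2-partition of $\{1,\dots,n+1\}$ separating $a$ and $b$; one reads directly that $L\in V_t\cap V_{t'}$ forces the induced partition on $\{a,b,c,d\}$ to be $\{a,d\}\sqcup\{b,c\}$, that $L\in V_{\tilde t}\cap V_{\tilde{t'}}$ forces $\{a,c\}\sqcup\{b,d\}$, and that $L\in V_u$ is equivalent to the union of these two conditions. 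Hence
$$V_s\cap\bigl((V_t\cap V_{t'})\cup(V_{\tilde t}\cap V_{\tilde{t'}})\bigr)=V_s\cap V_u,$$
giving $s\cdot W=V_s\cap V_u\cap W_0$, as announced.

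Finally, in each case the commutativity of $Q'$ is immediate from the disjointness of the transpositions involved (in Case 3, one notes that $u=(c,d)$ is disjoint from $s$ and from every element of $Q\setminus\{t,t'\}$, since the latter are disjoint from both $t$ and $t'$). Applying the first step to $Q'$ then yields $s\cdot W\neq 0$.
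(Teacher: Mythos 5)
Your proof is correct. Cases 1 and 2, and the nonvanishing of $W$ and $s\cdot W$, are handled the same way as in the paper: a commuting set of transpositions is a matching, hence $2$-colorable, which yields a suitable parabolic and therefore a Weyl line via Lemma \ref{lem:parabolic}; and for Case 2 the triangle identity $V_s\cap(V_t\cup V_{sts})=V_s$ from Lemma \ref{lem:hyperebene} is used identically. The genuine difference is in Case 3. The paper stays with Lemma \ref{lem:hyperebene}: it applies it twice to rewrite $V_s\cap V_t\cap V_{t'}$ through the intermediate hyperplane $H_{st's}$ as $V_t\cap V_{t'}\cap V_{tst'st}$, does the symmetric computation for $V_s\cap V_{sts}\cap V_{st's}$, and then proves the reverse inclusion $V_s\cap V_{tst'st}\subseteq(V_t\cap V_{t'})\cup(V_{sts}\cap V_{st's})$ by a case split on whether a Weyl line lies in $H_t$. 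You instead go back to the partition correspondence of Lemma \ref{lem:parabolic}: writing $s=(a,b)$, $t=(a,c)$, $t'=(b,d)$, you read membership in $V_s\cap V_t\cap V_{t'}$, in $V_s\cap V_{sts}\cap V_{st's}$, and in $V_s\cap V_{(c,d)}$ directly as conditions on how the associated $2$-partition restricts to $\{a,b,c,d\}$, and the identity $V_s\cap\bigl((V_t\cap V_{t'})\cup(V_{sts}\cap V_{st's})\bigr)=V_s\cap V_{(c,d)}$ drops out immediately. This is a cleaner and more self-contained computation of the key equality; it also yields, as a free byproduct, the exhaustiveness of the three cases (at most one element of the matching $Q$ can contain $a$ and at most one can contain $b$), a point the paper leaves implicit. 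The paper's version has the merit of reusing Lemma \ref{lem:hyperebene} uniformly, which is the geometric workhorse elsewhere in the section, but your route is equally valid and arguably more transparent.
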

\begin{proof}
First notice that $W\neq \{0\}$: since the reflections from $Q$ pairwise commute, any Weyl line corresponding to a parabolic subgroup $P$ with the following property will be in $W$: the operation of $P$ on $\{1,\dots,n+1\}$ yields two orbits $S_1$ and $S_2$ where each $t\in Q$ has an index from its support in $S_1$ and the other one in $S_2$. The same will hold for $s\cdot W$. The fact that the sets $Q'$ are commuting is obvious in the two first cases ; for the third case it is an easy computation viewing the reflections as transpositions and considering their supports. 

First recall that for $s, t$ any two reflections, $sV_t=V_{sts}$. If $s\in T$ commutes with any of the $t\in Q$ then $s\cdot W=\left(\bigcap_{t\in Q} V_t\right)\cap V_s$ since $s V_t=V_{sts}=V_t$ whenever $s$ and $t$ commute. 

If $s t\neq t s$ for some $t\in T_W$ but $s$ commutes with any $t'\in Q$ with $t'\neq t$, then 
$$s\cdot W=\left(\bigcap_{r\in Q\backslash t} V_{r}\right)\cap V_s\cap( V_{t}\cup V_{s t s})$$
\noindent As we have seen in lemma \ref{lem:hyperebene} we have $V_s\cap( V_{t}\cup V_{s t s})=V_s$ hence
$$s\cdot W=\bigcap_{r\in (Q\backslash t)\cup\{s\}} V_{r},$$
The remaining case is the case where $s$ does not commute with exactly two reflections $t, t'\in Q$. In that case one has
$$s\cdot W=\left(\bigcap_{r\in Q\backslash\{t,t'\}} V_r\right)\cap V_s\cap((V_{t}\cap V_{t'})\cup (V_{sts}\cap V_{st's})).$$
We claim that 
$$V_s\cap((V_{t}\cap V_{t'})\cup (V_{sts}\cap V_{st's}))=V_s\cap V_{t s t' s t},$$
which concludes. By lemma \ref{lem:hyperebene} we have
\begin{eqnarray*}
V_s\cap V_{t}\cap V_{t'}&=&V_t\cap V_{t'}\cap H_{st's}=V_{t'}\cap(V_t\cap H_{st's})\\
&=&V_{t'}\cap V_t\cap V_{tst'st}.
\end{eqnarray*}
Similarly, $V_s\cap V_{s t s}\cap V_{s t' s}=V_{sts}\cap V_{st's}\cap V_{tst'st}$.
Conversely, since $V_s\cap V_{t s t' s t}$ in not equal to zero consider a Weyl line $L\subset V_s\cap V_{t s t' s t}$. If $L\not\subset H_{t}$, then $L\subset H_{s t' s}$ and hence $L\not\subset H_{t'}$ since $L\not\subset H_{s}$. Similarly if $L\in V_s\cap V_{t s t' s t}$ and $L\subset H_{t}$ then $L\not\subset H_{s t' s}$ (since $L\not\subset H_{t s t' s t}$) and $L\not\subset H_{s t s}$ (since $L\not\subset H_{s}$).
\end{proof}
\begin{prop}\label{prop:reflexion}
Let $W\in\mathcal{V}_n$. Then $W\neq \{0\}$ and there exists a unique set $T_W\subset T$ with $tt'=t't$ for each $t, t'\in T_W$ such that $W=\bigcap_{t\in T_W} V_{t}$.
\end{prop}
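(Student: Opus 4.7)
My plan is to split the statement into two parts: existence together with non-vanishing, handled by induction and Lemma~\ref{lem:reflexions-recurrence}, and uniqueness of $T_W$, which requires showing that $T_W$ depends only on $W$.

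\textbf{Existence.} I would argue by induction on the length $m$ of a sequence $i_1\cdots i_m$ such that $W = W_{i_1\cdots i_m}$. The base case $m=1$ is immediate with $T_W = \{s_{i_1}\}$. For the inductive step, the recursive definition gives $W = s_{i_1}\cdot W_{i_2\cdots i_m}$; applying the induction hypothesis to $W_{i_2\cdots i_m}$ produces a commuting presentation $\bigcap_{t\in Q}V_t$, and then Lemma~\ref{lem:reflexions-recurrence} directly yields a commuting set $Q'$ and a presentation $W = \bigcap_{t\in Q'}V_t$, simultaneously guaranteeing $W\neq\{0\}$.

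\textbf{Uniqueness.} The plan is to show that $T_W$ can be recovered from $W$ itself via
\[
T_W = \{s\in T\mid W\subset V_s\}.
\]
The inclusion from left to right is tautological. For the converse, given $s = (a,b)\in T\setminus T_W$ (identifying reflections with transpositions in $S_{n+1}$), I would construct a $2$-partition $\{1,\dots,n+1\} = S_1\sqcup S_2$ such that every transposition of $T_W$ is split between $S_1$ and $S_2$ while $a$ and $b$ lie in the same part; Lemma~\ref{lem:parabolic} then realizes this partition as a Weyl line $L\subset W$ with $L\subset H_s$, contradicting $W\subset V_s$. The construction splits into the cases $\{a,b\}\not\subset I$ and $\{a,b\}\subset I$, where $I$ denotes the union of supports of elements of $T_W$; in the first case one freely places the element outside $I$ on the same side as the other, and in the second case $a$ and $b$ must belong to the supports of two \emph{distinct} elements of $T_W$ (otherwise $s$ itself would be in $T_W$), so one places $a, b$ on one side and their respective partners on the other, distributing the remaining supports of $T_W$ and the letters outside $I$ to keep both parts non-empty.

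The main obstacle is the uniqueness part, since existence reduces mechanically to Lemma~\ref{lem:reflexions-recurrence}. The delicate point in the uniqueness argument is checking that the constructed partition is valid, i.e.\ that both $S_1$ and $S_2$ end up non-empty; this requires a separate sanity check when $|T_W|$ is small or when the letters outside $I$ are scarce, but in all situations one has enough freedom to distribute the remaining letters so as to obtain a non-trivial partition.
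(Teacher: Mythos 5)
Your existence argument is identical to the paper's: induction on the length of the defining sequence, reducing at each step to Lemma~\ref{lem:reflexions-recurrence}. For uniqueness, however, you take a genuinely different and in fact cleaner route. The paper proves the same characterization $T_W=\{s\in T\mid W\subset V_s\}$, but does so by contradiction: assuming $W\subset V_s$ with $s\notin T_W$, it splits according to whether $s$ commutes with all of $T_W$. In the non-commuting case it derives $W\subset H_{sts}$ and $W\subset V_{sts}$ simultaneously (using the $t$-invariance of $W$), forcing $W=\{0\}$; in the commuting case it bounds $|T_W|+1$ by $\lfloor(n+1)/2\rfloor$ and further subdivides into strict inequality (producing a reflection $s'$ and again exploiting invariance) and equality (constructing one explicit Weyl line by hand). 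Your proof instead goes directly through the bijection of Lemma~\ref{lem:parabolic}: given $s=(a,b)\notin T_W$, build a two-block partition of $\{1,\dots,n+1\}$ that separates the support of every element of $T_W$ but keeps $a$ and $b$ together; the corresponding Weyl line $L$ then lies in $W$ and in $H_s$, so $W\not\subset V_s$. This handles all cases uniformly and avoids the paper's cardinality dichotomy entirely. The one detail you should make explicit is that the constructed partition is indeed non-degenerate: it suffices to observe that every $W\in\mathcal{V}_n$ satisfies $W\subset V_{i_1}$ for the first index of its defining sequence, so $T_W\neq\emptyset$ and each of $S_1$, $S_2$ already receives one endpoint of some transposition in $T_W$; your case split on $\{a,b\}\subset I$ versus not, noting that $t_a\neq t_b$ in the former case, is exactly right. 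In short: correct, and more elementary than the original via a purely combinatorial construction.
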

\begin{proof}
Existence is shown using induction on the length of a sequence associated to a variety in $\mathcal{V}_n$. If $W\in\mathcal{V}_n$ is obtained from a sequence of length $1$, then $W=V_j$ for some $j$ and $W\neq 0$. Now assume the result holds for each variety in $\mathcal{V}_n$ obtained from a sequence of length less than or equal to $m$, and suppose $W\in\mathcal{V}_n$ is obtained from a sequence of length equal to $m+1$. Then by definition $W=s\cdot W'$ for some simple reflection $s$ and some variety $W'\in\mathcal{V}_n$ obtained from a sequence of length equal to $m$. By induction $W'=\bigcap_{t\in T_{W'}} V_{t}$ and thanks to lemma \ref{lem:reflexions-recurrence} we have $W=\bigcap_{t\in Q'} V_t$ with $Q'$ commuting and $W\neq \{0\}$.

For unicity, suppose $W\in\mathcal{V}_n$ and suppose there exists another set $Q$ of pairwise commuting reflections such that $W=\bigcap_{t\in Q} V_t$. Let $s\in Q$. Suppose $s\notin T_W$. If there exists $t\in T_W$ such that $ts\neq st$, then $W\subset V_t\cap V_s\subset H_{sts}$ using lemma \ref{lem:hyperebene}. But this is impossible because since $t$ commutes to any reflection in $T_W$, $W$ is $t$-invariant, hence $W=t W\subset t V_s=V_{sts}$. Now suppose $s$ commutes with any reflection in $T_W$. In type $A$, a set of commuting reflections contains at most $\left\lfloor \frac{n+1}{2} \right\rfloor$ elements. Hence $|T_W|+1\leq \left\lfloor \frac{n+1}{2} \right\rfloor$. First suppose $|T_W|+1< \frac{n+1}{2}$. As a consequence, if $n>1$, there must exist a reflection $s'\in T$ such that $s'$ commutes with any element of $T_W$ but not with $s$ (think about identifying reflections with transpositions and considering their supports). If $L\subset W$ is a Weyl line, then by assumption $L\subset V_s$. This forces $L\not\subset H_{s'}$ or $L\not\subset H_{ss's}$ : otherwise $L=s'L\subset H_s$. Suppose $L\not\subset H_{s'}$, which implies by lemma \ref{lem:hyperebene} that $L\subset H_{ss's}$ and $s' L\subset H_s$. But since $s'$ commutes with any reflection in $T_W$ one has that $W$ is $s'$-stable, hence $s'L\subset W\subset V_s$, a contradiction. The case where $L\subset H_{s'}$ is similar, permuting $s'$ and $ss's$. Hence any case with $s\notin T_W$ leads to a contradiction. This forces $Q\subset T_W$ and also $T_W\subset Q$ by exchanging the roles of $Q$ and $T_W$.

Now suppose $|T_W|+1=\frac{n+1}{2}$. Write $T_W\cup\{s\}=\{t_1,\dots, t_{k}, s\}$ and we can suppose without loss of generality that $t_i=s_{2i-1}$, $s=s_{2k+1}$. Notice that $k=(n-1)/2$. Consider the intersection of hyperplanes 
$$H_{s_1 s_2 s_1}\cap H_{s_2 s_3 s_2}\cap H_{s_3 s_4 s_3}\cap\cdots\cap H_{s_{2k-2} s_{2k-1} s_{2k-2}}\cap H_{s_{2k-1} s_{2k} s_{2k-1}}\cap H_{s_{2k+1}}$$
\noindent that involves $2k=n-1$ reflecting hyperplanes. The product 
$$(s_1 s_2 s_1)(s_2 s_3 s_2)\cdots(s_{2k-2} s_{2k-1} s_{2k-2})(s_{2k-1} s_{2k} s_{2k-1}) s_{2k+1}$$
has reflection length equal to $n-1$. It follows that the above intersection of hyperplanes is a Weyl line $L$. But then $L\subset H_s$ and $L\not\subset H_t$ for each $t\in T_{W}$ : if $L\subset H_{s_j}$ for some $j=1,3,\dots, 2k-1$, it follows by successive conjugations that $L\subset H_{s_j}$ for each index $j=1,\dots, n$ and hence that $L=0$. Hence $\bigcap_{t\in T_W} V_t\neq \bigcap_{t\in T_W\cup\{s\}} V_t$.   
\end{proof}
\begin{rmq}\label{rmq:reflection}
When proving unicity in the above proof we have shown that if $W\in\mathcal{V}_n$ and $W\subset V_t$ for some reflection $t\in T$, then $t\in T_W$ and in particular $W$ is $t$-invariant. Hence we have :
\end{rmq}
\begin{prop}
Let $W\in \mathcal{V}_n$. Then
$$T_W=\{s\in T~|~W\subset V_s\}.$$
\end{prop}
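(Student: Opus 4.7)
The plan is to show both inclusions of the claimed equality, using the previous proposition and the remark that immediately precedes it.

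For the inclusion $T_W \subseteq \{s \in T \mid W \subset V_s\}$, this is immediate from the existence part of Proposition \ref{prop:reflexion}: if $s \in T_W$, then
$$W = \bigcap_{t \in T_W} V_t \subset V_s,$$
since $V_s$ is one of the factors in the intersection.

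For the reverse inclusion $\{s \in T \mid W \subset V_s\} \subseteq T_W$, I would invoke Remark \ref{rmq:reflection}, which records exactly this fact: during the proof of uniqueness in Proposition \ref{prop:reflexion} it was shown that if $W \in \mathcal{V}_n$ and $W \subset V_s$ for some reflection $s$, then $s \in T_W$. More explicitly, if one assumes for contradiction that $s \notin T_W$ but $W \subset V_s$, then one runs through the case analysis of the uniqueness argument: either some $t \in T_W$ fails to commute with $s$ (and one uses Lemma \ref{lem:hyperebene} combined with $t$-invariance of $W$ to derive $W \subset V_{sts} \neq V_s$, a contradiction in the Weyl line argument), or $s$ commutes with every element of $T_W$ and one exhibits a Weyl line in $\bigcap_{t \in T_W} V_t$ that fails to lie in $V_s$, again contradicting $W \subset V_s$.

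Since both arguments are already contained in the uniqueness proof of Proposition \ref{prop:reflexion}, no further obstacle appears — the proposition is essentially a bookkeeping consequence extracting the remark into a standalone statement. The only subtlety, should one wish to reprove things rather than quote the remark, is handling the borderline case $|T_W| + 1 = (n+1)/2$, where the ad hoc Weyl line constructed from the specific reflections $s_{2i-1} s_{2i} s_{2i-1}$ was needed; but again this has already been done.
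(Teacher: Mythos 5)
Your proof is correct and follows exactly the paper's intended route: the forward inclusion is immediate from the expression $W=\bigcap_{t\in T_W}V_t$, and the reverse inclusion is precisely the content of Remark~\ref{rmq:reflection}, which the paper states just before the proposition and which summarizes the uniqueness argument from Proposition~\ref{prop:reflexion}. No discrepancy to note.
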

\noindent The following consequence will be crucial further:
\begin{cor}\label{cor:ni}
Let $W\in\mathcal{V}_n$, $i\in\{1,\dots,n\}$. Then $V_i\cap W\neq\{0\}$ and the following are equivalent:
\begin{enumerate}
\item The variety $W$ is $s_i$-invariant,
\item The variety $W\cap V_i$ is $s_i$-invariant,
\item For each $t\in T$ such that $ts_i\neq s_it$, $(W\cap V_i)\cap V_t\neq 0$. 
\end{enumerate}
\end{cor}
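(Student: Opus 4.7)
My plan is to establish the nonvanishing $V_i\cap W\neq \{0\}$ first, and then the equivalences by proving $(1)\Leftrightarrow(2)$ and $(1)\Leftrightarrow(3)$. For the nonvanishing I would invoke Lemma~\ref{lem:reflexions-recurrence} with $s=s_i$ to get $s_i\cdot W\neq\{0\}$; unfolding the definition and using $s_iV_i=V_i$ gives $s_i\cdot W = (V_i\cap W)\cup s_i(V_i\cap W)$, so if $V_i\cap W$ were $\{0\}$ then $s_i\cdot W$ would be too, a contradiction.

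A key auxiliary fact I would extract from Proposition~\ref{prop:reflexion} is that, in type $A$, $W$ is $s_i$-invariant if and only if $s_i$ commutes with every reflection in $T_W$. One direction is clear since then $s_iV_rs_i=V_r$ for each $r\in T_W$. For the converse, if $s_iW=W$ then $s_iT_Ws_i=T_W$ by uniqueness in Proposition~\ref{prop:reflexion}, and any $r\in T_W$ not commuting with $s_i$ would produce $s_irs_i\in T_W$ sharing a letter with $r$, contradicting the pairwise commutativity of $T_W$ (transpositions in type $A$ commute iff their supports are disjoint).

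The implication $(1)\Rightarrow(2)$ is immediate from $s_iV_i=V_i$, while $(2)\Rightarrow(1)$ runs by contrapositive: the auxiliary fact supplies some $r\in T_W$ not commuting with $s_i$; Lemma~\ref{lem:hyperebene} applied to $s_i,r,s_irs_i$ gives $V_i\cap V_r\subset H_{s_irs_i}$. Any Weyl line $L\subset W\cap V_i$ lies in $V_r\cap V_i\subset H_{s_irs_i}$, so $L\not\subset V_{s_irs_i}$; since $s_iH_r=H_{s_irs_i}$, this forces $s_iL\subset H_r$, i.e.\ $s_iL\not\subset V_r$, and therefore $s_iL\not\subset W\cap V_i$. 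Symmetrically, $(3)\Rightarrow(1)$ is contrapositive: picking such an $r$ and setting $t:=s_irs_i$, Lemma~\ref{lem:hyperebene} applied to $s_i,t,s_its_i=r$ gives $V_i\cap V_t\subset H_r$, hence $(W\cap V_i)\cap V_t\subset V_r\cap H_r=\{0\}$ since $W\subset V_r$, contradicting $(3)$.

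The main technical step will be $(1)\Rightarrow(3)$. Once $W$ is $s_i$-invariant, the auxiliary fact gives $s_i\cdot W = W\cap V_i$, which belongs to $\mathcal{V}_n$ and satisfies $s_i\in T_{W\cap V_i}$ by Lemma~\ref{lem:reflexions-recurrence}. The other reflections in $T_{W\cap V_i}$ therefore commute with $s_i$, so in type $A$ their supports are pairwise disjoint and all disjoint from $\{i,i+1\}$. Given $t\in T$ not commuting with $s_i$, I would write $t=(i,c)$ (the $(i+1,c)$-case being symmetric) with $c\notin\{i,i+1\}$ and construct, via the bijection of Lemma~\ref{lem:parabolic}, a two-part partition $S_1\sqcup S_2$ of $\{1,\dots,n+1\}$ with $i\in S_1$, $i+1,c\in S_2$, and each pair $\{a_r,b_r\}$ split between $S_1$ and $S_2$ in a way compatible with the placement of $c$ (so that in the one case where $c\in\{a_r,b_r\}$ the partner is forced into $S_1$). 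The corresponding Weyl line lies in $W\cap V_i\cap V_t$; this combinatorial bookkeeping is routine but is where the real work sits.
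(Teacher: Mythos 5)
Your proof is correct, and the tools are the paper's tools (Proposition~\ref{prop:reflexion}, Lemma~\ref{lem:hyperebene}, Lemma~\ref{lem:reflexions-recurrence}, Lemma~\ref{lem:parabolic}), but the organization is genuinely different from the paper's. The paper establishes the cycle $(1)\Rightarrow(2)\Rightarrow(3)\Rightarrow(1)$, whereas you prove $(1)\Leftrightarrow(2)$ and $(1)\Leftrightarrow(3)$ separately, giving four implications where three suffice. The bigger divergence is at what you correctly flag as the main step: you prove $(1)\Rightarrow(3)$ directly by constructing a Weyl line in $W\cap V_i\cap V_t$ via the partition bijection of Lemma~\ref{lem:parabolic} (after observing $s_i\in T_{W\cap V_i}$ so that $T_{W\cap V_i}$ is a pairwise-commuting, pairwise-support-disjoint set away from $\{i,i+1\}$), whereas the paper gets $(2)\Rightarrow(3)$ by contradiction in two lines: if $W\cap V_i\subset H_t$ with $ts_i\neq s_it$, then $s_i$-invariance forces $W\cap V_i\subset H_{s_its_i}$ as well, hence $W\cap V_i\subset H_{s_i}$ (since $t$ and $s_its_i$ generate $s_i$), contradicting $W\cap V_i\neq\{0\}$. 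The paper's version reuses the already-established nonvanishing and sidesteps the combinatorial bookkeeping; your version is more hands-on but does carry through (the supports of the reflections in $T_{W\cap V_i}\setminus\{s_i\}$ are pairwise disjoint and miss $\{i,i+1\}$, so the only interaction with $c$ is the single reflection possibly containing $c$, which is handled consistently). Your alternative argument for $V_i\cap W\neq\{0\}$, invoking $s_i\cdot W\neq\{0\}$ from Lemma~\ref{lem:reflexions-recurrence} and the identity $s_i\cdot W=(V_i\cap W)\cup s_i(V_i\cap W)$, is also a clean shortcut compared to the paper's explicit construction of a Weyl line at that point.
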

\begin{proof}
Thanks to the above proposition, $W=\bigcap_{t\in T_W} V_t$, where $T_W\subset T$ is a set of pairwise commuting reflections. Hence we can find a partition $S_1\cup S_2=\{1,\dots,n\}$ such that $i\in S_1$, $i+1\in S_2$ and each $t\in T_W$ can be written as a transposition $(j,k)$ with $j\in S_1$ and $k\in S_2$. Thanks to lemma \ref{lem:parabolic} this gives us a corresponding Weyl line included in $W\cap V_i$, hence $W\cap V_i\neq\{0\}$. If $W$ is $s_i$-invariant then so is $W\cap V_i$, and then if $W\cap V_i\subset H_t$ for some reflection $t$ which does not commute with $s_i$, one would get $W\cap V_i\subset H_{s_its_i}$ by $s_i$-invariance and hence also $W\cap V_i\subset H_i$ which would force $W\cap V_i=\{0\}$. Now if $W$ is not $s_i$-invariant, there exists $t'\in T_W$ such that $t' s_i\neq s_i t'$ and $V_i\cap W\subset V_i\cap V_{t'}\subset H_{s_i t' s_i}$ by lemma \ref{lem:hyperebene}, and $t=s_i t' s_i$ does not commute with $s_i$ since $t'$ does not. 
\end{proof}
\begin{defn}
A set $Q\subset T$ of pairwise commuting reflections is \textbf{noncrossing} if after identification with a set of transpositions of the isomorphic symmetric group, it contains no pair of transpositions $(i, j)$ and $(k, l)$ with $i<k<j<l$. 
\end{defn}
If we draw $n+1$ points on a circle and label each of them with an index between $1$ and $n+1$, starting by $1$ at some point and writing the increasing indices in clockwise order, and represent a transposition by a line segment between the two indices it exchanges, a set $Q\subset W$ of reflections is noncrossing if and only if any two segments in the corresponding circle never cross each other. Equivalently, if one draws a line with $n+1$ points starting on the left by $1$ and represent a transposition by an arc between the two indices it exchanges (up to homotopy), then a set of reflections is noncrossing if and only if there is a way of writing the arcs such that any two arcs associated to distinct reflections from this set never cross. This last way of representing noncrossing sets will turn out to be the most convenient one.  

\begin{defn}
Given any subset $Q\subset \mathcal{W}$, the \textbf{support} of $Q$, written $\mathrm{supp}(Q)$, is the union of the supports of its elements viewed as elements of the symmetric group. A set $Q\subset T$ of pairwise commuting reflections will be said to be \textbf{dense} if it is noncrossing and if there exists an integer $k>0$ and indices $0<m_1<j_1<m_2<j_2<\dots <m_k<j_k\leq n+1$ such that $(m_q, j_q)\in Q$ and $\mathrm{supp}(Q)=\bigcup_{q=1}^k \{m_q, m_q+1,\dots, j_q\}$. This forces in particular $j_q-m_q$ to be odd for each $q$ since $Q$ is noncrossing and $(m_q,j_q)\in Q$. A subset of $\mathrm{supp}(Q)$ of the form $\{m_q, m_q+1,\dots, j_q\}$ as above will be called a \textbf{block} of indices from $Q$. 
\end{defn}
%If the set $Q$ is made of commuting reflections, this is equivalent to saying that the product of all its reflections left divide the Coxeter element $c=s_1\cdots s_n$ for the reflection length $\ell_T$.  
\begin{lem}
Let $W\in\mathcal{V}_n$. Then $T_W$ is noncrossing.
\end{lem}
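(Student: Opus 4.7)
The plan is to proceed by induction on the length $m$ of a sequence $(i_1,\dots,i_m)$ with $W = W_{i_1\cdots i_m}$. The base case $m=1$ is immediate: then $W = V_{i_1}$ and $T_W = \{s_{i_1}\}$ is a singleton, hence trivially noncrossing. For the inductive step, write $W = s_i\cdot W'$ with $W'\in\mathcal{V}_n$ arising from a sequence of length $m-1$, and assume by induction that $T_{W'}$ is noncrossing. Lemma \ref{lem:reflexions-recurrence} then describes $T_W$ as one of three modifications of $T_{W'}$, so it suffices to verify that each of these three operations preserves the noncrossing property.

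In the first two cases of Lemma \ref{lem:reflexions-recurrence} the only reflection added to $T_W$ is the simple reflection $s_i = (i, i+1)$, whose support is the pair of adjacent indices $\{i, i+1\}$. Every other element of $T_W$ commutes with $s_i$ (only the single reflection $t$ is removed in the second case), so as a transposition it has support disjoint from $\{i, i+1\}$. Such a transposition $(c, d)$ can never cross $(i, i+1)$, since a crossing would require an endpoint of one arc to lie strictly between the endpoints of the other, and there is no integer strictly between $i$ and $i+1$. The remaining pairs of reflections in $T_W$ already lie in $T_{W'}$, so are noncrossing by the inductive hypothesis.

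The third case is the delicate one. Writing $t = (a, i)$ and $t' = (b, i+1)$ for the two elements of $T_{W'}$ that do not commute with $s_i$ (the shape is forced by the disjointness of supports of commuting transpositions, giving $a, b \notin \{i, i+1\}$ and $a \neq b$), a direct computation yields $tt's_it't = (a, b)$. The noncrossing condition on $\{t, t'\} \subset T_{W'}$ restricts us to exactly three configurations: $b < a < i$ (the arc $(a,i)$ nested inside $(b, i+1)$), $i+1 < b < a$ (the arc $(b, i+1)$ nested inside $(a, i)$), or $a < i$ and $b > i+1$ (the two arcs disjoint on either side of $\{i, i+1\}$). In each configuration one checks that any other transposition $(c, d) \in T_{W'}\setminus\{t, t'\}$, being disjoint in support from $\{a, b, i, i+1\}$ and noncrossing with both $(a, i)$ and $(b, i+1)$, is forced to lie entirely within one of the four gaps determined by $\min(a, b), \max(a, b), i, i+1$, or else to surround both removed arcs with $c$ to the left and $d$ to the right; in every such possibility $(c, d)$ is nested in, disjoint from, or surrounds the new arc $(a, b)$, and in particular does not cross it. Together with the observation that $(c, d)$ does not cross $(i, i+1)$, this shows $T_W$ is noncrossing.

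The main obstacle is the case analysis inside the third case: one has to rule out that the potentially long new arc $(a, b)$ accidentally crosses a surviving reflection. The point that makes this manageable is that having to be simultaneously noncrossing with both $(a, i)$ and $(b, i+1)$ is a strong constraint on the position of a third arc $(c, d)$, and in each of the three allowable geometric configurations for $\{(a, i), (b, i+1)\}$ these constraints confine $(c, d)$ to regions where its interaction with $(a, b)$ is automatically noncrossing.
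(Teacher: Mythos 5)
Your proof is correct and takes essentially the same route as the paper: induction on the length of the defining sequence, reducing the inductive step to checking that the three rules of Lemma \ref{lem:reflexions-recurrence} preserve the noncrossing property, with the third rule handled by drawing the reflections as arcs on a line. The paper merely asserts the third case is ``clear'' from the arc picture, whereas you spell out the case analysis of the possible positions of a surviving arc relative to the two removed arcs $(a,i)$ and $(b,i+1)$; this is a welcome elaboration, not a different method.
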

\begin{proof}
Again, we use induction on the length of the sequence defining $W$. If such a sequence has length one the result is clear. Let $W=s\cdot W'$ and suppose $Q=T_{W'}$ is noncrossing, then $Q'=T_{s\cdot W'}$ is also noncrossing using the formulas from lemma \ref{lem:reflexions-recurrence} (it is obvious in the two first cases and clear for the last one if we represent $Q$ and $Q'$ as arcs joining points on a line).
%Recall that an algebraic set in $W\in\mathcal{V}_n$ is associated to a sequence and built inductively on the length of the sequence. If the sequence has length $1$, then by definition the corresponding variety is equal to $V_i$ for some $i=1,\dots, n$ and there is nothing to prove. Now let $W=W_{i_1 i_2\cdots i_k}$ and suppose $T_{W_{i_2\cdots i_k}}$ consists of noncrossing reflections. If $s_{i_1}$ commutes with any reflection in $T_{W_{i_2\cdots i_k}}$, then $T_W=T_{W_{i_2\cdots i_k}}\cup\{s_{i_1}\}$ which is noncrossing since $T_{W_{i_2\cdots i_k}}$ is by induction and $s_{i_1}$ exchanges two successive indices and hence cannot cross any other reflection. If $s_{i_1}$ commutes with any reflection in $T_{W_{i_2\cdots i_k}}$ except one reflection $t$, we have seen in the proof of proposition \ref{prop:reflexion} that $T_W=(T_{W_{i_2\cdots i_k}}-\{t\})\cup\{s_{i_1}\}$ which is also still noncrossing. The last case is the case where there are exactly two reflections in $T_{W_{i_2\cdots i_k}}$ which do not commute with $s_{i_1}=(i_1, i_1+1)$ ; we can suppose $t=(k, i_1), t'=(j, i_1+1)$ and one has that $j$ is not between $k$ and $i_1$ on the circle. Again, the proof of proposition \ref{prop:reflexion} shows that $T_W=(T_{W_{i_2\cdots i_k}}-\{t, t'\})\cup\{s_{i_1}, (k, j)\}$, and one sees by representing the situation on a line with $n+1$ points and arcs associated to each reflection that this is again a noncrossing set. 

%\usepackage{pstricks,pst-plot,pst-text,pst-tree,pst-eps,pst-fill,pst-node,pst-math}
\end{proof}
\begin{nota}
If $W\in\mathcal{V}_n$ is associated to a sequence $i_1\cdots i_k$ we will often write $T(i_1\cdots i_k)$ instead of $T_W$ for convenience. Notice that using lemma \ref{lem:reflexions-recurrence} one can inductively compute the variety and the corresponding dense set associated to a sequence.  
\end{nota}
\begin{thm}\label{thm:classification}
Let $W\in \mathcal{V}_n$. Then $T_W$ is dense. Conversely, any dense subset $Q\subset T$ is equal to a $T_{V'}$ for some variety $V'\in\mathcal{V}_n$. In formulas, 
$$\{T_W~|~W\in\mathcal{V}_n\}=\{Q\subset T~|~Q~\text{is dense}\}.$$
\end{thm}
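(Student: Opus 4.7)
The plan is a double induction: for the forward direction (every $T_W$ is dense) I would induct on the length of a sequence defining $W$, and for the converse (every dense $Q$ arises as some $T_{V'}$) I would induct on the weight $\mu(Q):=\sum_{(p,q)\in Q}(q-p)$.

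For the forward direction the base case $W=V_i$ gives $T_W=\{s_i\}$, visibly dense with the single block $[i,i+1]$. For the step, write $W=s_i\cdot W'$ with $T_{W'}$ dense by induction, and read off $T_W$ from the three formulas of lemma \ref{lem:reflexions-recurrence}. Noncrossing has already been handled, so only block/support structure remains. In Case 1 the pair $\{i,i+1\}$ is disjoint from $\mathrm{supp}(T_{W'})$ because $s_i$ commutes with every reflection there, so adjoining $s_i$ merely inserts a fresh size-two block $[i,i+1]$ into the decomposition. In Case 2 the unique non-commuting reflection, say $t=(i,a)$, satisfies $i+1\notin\mathrm{supp}(T_{W'})$, which forces $i$ to be the right endpoint of its block and $t=(a,i)$ to be the outermost transposition of that block $[a,i]$; replacing $t$ by $s_i$ therefore decomposes $[a,i]$ into the sub-blocks of its (dense) inner matching on $[a+1,i-1]$ and appends the new block $[i,i+1]$. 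In Case 3, a case analysis of how $t=(i,a)$ and $t'=(i+1,b)$ sit inside $T_{W'}$'s blocks (outermost of two adjacent blocks, or inside a common block with various inner/outermost patterns) shows, together with the identity $tt's_it't=(a,b)$, that the affected blocks always reorganise into a valid dense decomposition: when $t,t'$ are outermost of adjacent blocks $[a,i],[i+1,b]$ these merge into $[a,b]$ with inner enlarged by $s_i$, and when $t,t'$ share a block the size-two block $[i,i+1]$ splits off while the surviving inner indices re-form around the new transposition $(a,b)$.

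For the converse the base $\mu(Q)=1$ forces $Q=\{s_i\}=T_{V_i}$. In the step, first look for a top-level size-two block $[i,i+1]$ of $Q$: if one exists then $s_i$ commutes with every other reflection of $Q$ (disjoint supports), $Q\setminus\{s_i\}$ is still dense with strictly smaller $\mu$, and induction combined with Case 1 of lemma \ref{lem:reflexions-recurrence} realises $Q$ from the sequence $i,\sigma'$ obtained by induction. Otherwise, descend the nesting tree of $Q$ to find a size-two sub-block $[i,i+1]$ which is a top-level block of the inner matching of some transposition $(a,b)\in Q$; such a pair exists because every nonempty dense set contains a size-two block somewhere in its nesting, and the innermost such is automatically top-level in its parent. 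Set $Q'':=(Q\setminus\{(a,b),s_i\})\cup\{(a,i),(i+1,b)\}$: the top-level position of $s_i$ in the inner of $(a,b)$ prevents any reflection of $Q$ from strictly straddling $\{i,i+1\}$, so $Q''$ is still noncrossing and the block $[a,b]$ splits cleanly into the two adjacent blocks $[a,i]$ and $[i+1,b]$, keeping $Q''$ dense. Since $\mu(Q'')=\mu(Q)-2$, induction yields $Q''=T_{W''}$ via some sequence $\sigma''$, and Case 3 of the lemma shows $T_{s_i\cdot W''}=Q$, realising $Q$ via the sequence $i,\sigma''$.

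The main obstacle is the case analysis in Case 3 of the forward direction: although the lemma is stated uniformly, density creates several qualitatively different configurations for $t$ and $t'$ (in two adjacent blocks both as outermost, or in a common block with various inner/outermost patterns), and one must verify in each that the surviving reflections together with $s_i$ and $(a,b)$ assemble into a valid dense decomposition. The converse has a milder mirror difficulty: one has to choose the pair $(a,b),s_i$ so that reverse Case 3 introduces no crossings, which is precisely guaranteed by requiring $s_i$ to be top-level in the inner matching of $(a,b)$.
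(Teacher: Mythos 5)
Your forward direction is the same as the paper's (induction on the length of a defining sequence, unpacking the three cases of lemma \ref{lem:reflexions-recurrence}), and you correctly fill in the case analysis that the paper leaves as ``clear'' and ``easy''; in particular you are right that in Case~3 the pairs $(t,t')$ must be siblings in the nesting tree, so the only configurations are two adjacent sibling blocks or one nested inside the other, and all of them reorganise into a dense set with $s_i$ and $tt's_it't=(a,b)$. Your converse, however, takes a genuinely different route. The paper reduces to a single block, constructs a canonical tail sequence $k_1\cdots k_{n(Q)}$ from the block structure, and inducts on the size of the largest block by first peeling off the outermost transposition $(k_1,k_{n(Q)}+1)$. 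You instead induct on the weight $\mu(Q)=\sum_{(p,q)\in Q}(q-p)$ and run the three rules of lemma \ref{lem:reflexions-recurrence} \emph{in reverse}: either delete a top-level $s_i$ (inverse of Case~1, $\mu$ drops by $1$), or locate an innermost size-two sub-block $[i,i+1]$ that is a top-level child of some $(a,b)\in Q$ and replace $\{(a,b),s_i\}$ by $\{(a,i),(i+1,b)\}$ (inverse of Case~3, $\mu$ drops by $2$), then apply $s_i\cdot(-)$. This gains symmetry with the forward direction and avoids the paper's auxiliary intermediate set $Q'$ and the explicit concatenation bookkeeping; the price is that one must verify the small structural facts you note (the innermost size-two sub-block exists and is top-level in its parent, and the split of $[a,b]$ into $[a,i]$ and $[i+1,b]$ preserves noncrossing and density), all of which hold. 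Both approaches are valid; yours is more economical.
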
 
\begin{proof}
Thanks to the previous lemma $T_W$ is noncrossing for each $W\in\mathcal{V}_n$. If $W$ is associated to a sequence of length $1$ then $T_W$ contains only one simple reflection, hence is dense. It suffices then to show that the rules from lemma \ref{lem:reflexions-recurrence} preserve dense sets, which is clear for the first two rules and easy for the last one if we write the reflections as transpositions.  

Conversely suppose that $Q$ is dense, in particular $\mathrm{supp}({Q})=\bigcup_{q=1}^{k} [m_q, j_q]$, with $j_q-m_q$ odd for each $1\leq q\leq k$. Consider the set of simple reflections $\bigcup_{q=1}^k \{s_{m_q}, s_{m_q+2},\dots,s_{j_q-1}\}$ and rewrite this union as $\{s_{k_1},\dots, s_{k_{n(Q)}}\}$ with $k_i<k_j$ if $i<j$. Notice that this is a set of pairwise commuting reflections. We will show by induction on the size of the biggest block of $Q$ that there exists a sequence $\mathrm{seq}=n_1 n_2\cdots n_\ell$ with $n_i\in\bigcup_{q=1}^{k} [m_q,j_q-1]$ for each $1\leq i\leq\ell$ such that $Q=T_W$ where $W$ is associated to the sequence
$$\mathrm{seq} k_1 k_2 \cdots k_{n(Q)}$$
obtained by concatenation of the sequence $\mathrm{seq}$ and the sequences $k_1 k_2\cdots k_{n(Q)}$. First we suppose that the size of the biggest block is $1$. Then each block has size one, in other words, $j_q=m_q+1$ for each $q$ and there is only one corresponding dense set $Q$: the set of reflections $\{s_{k_1}, s_{k_2}, \dots, s_{k_{n(Q)}}\}$. One then has $Q=T_W$ with $W$ associated to the sequence $k_1 k_2\cdots k_{n(Q)}$ (see example \ref{ex:distant}).
Now suppose that the biggest block $\mathcal{B}_i=[m_i, j_i]$ of $Q$ has size bigger than $1$. It suffices to show the induction hypothesis for the set $Q_i$ of reflections in $Q$ supported in $\mathcal{B}_i$, i.e., that $Q_i$ is equal to $T_W$ for some $W$ associated to a sequence $\mathrm{s(i)}=\mathrm{seq}_i m_i (m_i+2)\cdots (j_i-1)$ where $\mathrm{seq}_i$ is a sequence with all indices in $[m_i, j_i-1]$ : if this holds, one associates to each block $\mathcal{B}_q$ of $Q$ the variety $W_{\mathrm{s(q)}}$ such that $T_{W_{\mathrm{s(q)}}}$ is equal to the set $Q_q$ of reflections in $Q$ supported in $\mathcal{B}_q$ (this is possible since we show it for the biggest block(s) and the result holds by induction for blocks of smaller size); but then if $q\neq q'$ the reflections in $Q_q$ commute with the reflections in $Q_{q'}$ since they are supported in $[m_q, j_{q}]$ and $[m_{q'}, j_{q'}]$ which are disjoint. Hence one gets
\begin{eqnarray*} 
Q&=&\bigcup_{q=1}^k T({\mathrm{s}(q)})\\
&=&T({\mathrm{s}(1)\cdots \mathrm{s}(k)})\\
&=&T(\mathrm{seq}_{1}m_1(m_1+2)\cdots(j_1-1)\cdots \mathrm{seq}_{k} m_k(m_k+2))\cdots (j_k-1)\\
&=&T(\mathrm{seq}_{1}\cdots \mathrm{seq}_{k}\underbrace{m_1(m_1+2)\cdots(j_1-1)\cdots m_k(m_k+2)\cdots (j_k-1)}_{=k_1k_2\cdots k_{n(Q)}}),
\end{eqnarray*}
where second and last equalities hold since the indices in $\mathrm{s}(i)$ are distant from the indices $\mathrm{s}(i')$ whenever $i\neq i'$ (if two sequences $\mathrm{x}$ and $\mathrm{y}$ are such that any index in $\mathrm{x}$ is distant from any index in $\mathrm{y}$ then it is a consequence of lemma \ref{lem:reflexions-recurrence} that $T(\mathrm{xy})=T(\mathrm{yx})=T(\mathrm{x})\cup T(\mathrm{y})$).

Therefore we have to show that a dense set $Q$ having only one block $[k_1,k_{n(Q)}+1]$ can be obtained as $T_W$ for $W$ associated to a sequence obtained by concatenating a sequence with indices in $[k_1,k_{n(Q)}]$ to the left of $k_1\cdots k_{n(Q)}$ ; since $Q$ has a single block we have $k_ {j+1}=k_j+2$ for each $k=1,\dots, n(Q)-1$. We first show that we can concatenate a sequence to the left of this sequence to obtain a corresponding variety $W'$ such that $T_{W'}=Q'$ contains exactly the reflection $(k_1, k_{n(Q)}+1)$ and all the simple reflections $(k_1+1, k_2), (k_2+1, k_3),\dots, (k_{n(Q)-1}+1,k_{n(Q)})$ and then we will build $W$ from $W'$ by induction ; see figure \ref{figure:proc} for an illustration of this process. By induction using lemma \ref{lem:reflexions-recurrence} we get that $T_{W_{(k_i+1)\cdots(k_{n(Q)-1}+1)k_1\cdots k_{n(Q)}}}$ is equal to the set
$$\{s_{k_1},s_{k_2},\dots, s_{k_{i-1}}, (k_i, k_{n(Q)}+1), s_{k_i+1}, s_{k_{i+1}+1}, \dots, s_{k_{n(Q)-1}+1}\},$$
hence $Q'=T_{W'}$ where $W'$ is associated to the sequence
$$(k_1+1)(k_2+1)\cdots(k_{n(Q)-1}+1)k_1\cdots k_{n(Q)}.$$

Now if we remove the reflection $(k_1, k_{n(Q)}+1)$ from $Q$ we obtain again a dense subset $Q''$ with support equal to $[k_1+1, k_{n(Q)}]$ by density of $Q$ ; hence all blocks of $Q''$ have a size smaller than $k_{n(Q)}+1-k_1$. Hence by induction, $Q''$ can be obtained as a $T_{W''}$ for $W''$ associated to a sequence $\mathrm{seq} (k_1+1) (k_2+1)\cdots (k_{n(Q)-1}+1)$ for some sequence $\mathrm{seq}$ having all its indices lying in $\{k_1+1,\dots, k_{n(Q)-1}+1\}$. But then $s=(k_1, k_{n(Q)}+1)$ commutes with any reflection $s_\ell$ where $\ell$ is an index in $\mathrm{seq}$, hence one has
\begin{eqnarray*}
W_{\mathrm{seq} (k_1+1) \cdots (k_{n(Q)-1}+1) k_1\cdots k_{n(Q)}}&=&\mathrm{seq}\cdot W_{(k_1+1)\cdots (k_{n(Q)-1}+1) k_1\cdots k_{n(Q)}}=\mathrm{seq}\cdot W'\\&=&\mathrm{seq}\cdot\left(\bigcap_{t\in Q'} V_t\right)\\
&=&\mathrm{seq}\cdot\left( V_s\cap\bigcap_{i=1}^{n(Q)-1} V_{s_{k_i+1}}\right)\\&=&\mathrm{seq}\cdot(V_s\cap W_{(k_1+1)\cdots(k_{n(Q)-1}+1)})\\&=&V_s\cap (\mathrm{seq}\cdot W_{(k_1+1)\cdots(k_{n(Q)-1}+1)})\\
&=&V_s\cap\bigcap_{t\in Q''} V_t=\bigcap_{t\in Q} V_t,
\end{eqnarray*}
and the sequence $\mathrm{seq} (k_1+1) (k_2+1)\cdots (k_{n(Q)-1}+1)$ has all its indices lying in $[k_1,k_{n(Q)-1}+1]\subset [k_1,k_{n(Q)}]$.
\end{proof}
\begin{figure}[h!]
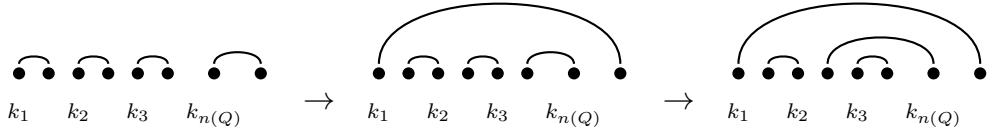

\begin{center}
\begin{tabular}{ccccc}
$~$ & & & &\\
\begin{psmatrix}[colsep=0.24,rowsep=0.4]
\pscircle*{0.08} & \pscircle*{0.08} & \pscircle*{0.08} & \pscircle*{0.08} & 
\pscircle*{0.08} & \pscircle*{0.08} & \pscircle*{0.08} & \pscircle*{0.08}\\
\scriptsize{$k_1$}&&\scriptsize{$k_2$}&&\scriptsize{$k_3$}&&\scriptsize{$k_{n(Q)}$}&
\end{psmatrix}
\ncarc[arcangle=90]{1,1}{1,2}
\ncarc[arcangle=90]{1,3}{1,4}
\ncarc[arcangle=90]{1,5}{1,6}
\ncarc[arcangle=90]{1,7}{1,8}
& $\rightarrow$ &
\begin{psmatrix}[colsep=0.24,rowsep=0.4]
\pscircle*{0.08} & \pscircle*{0.08} & \pscircle*{0.08} & \pscircle*{0.08} & 
\pscircle*{0.08} & \pscircle*{0.08} & \pscircle*{0.08} & \pscircle*{0.08}\\
\scriptsize{$k_1$}&&\scriptsize{$k_2$}&&\scriptsize{$k_3$}&&\scriptsize{$k_{n(Q)}$}&
\end{psmatrix}
\ncarc[arcangle=90]{1,1}{1,8}
\ncarc[arcangle=90]{1,2}{1,3}
\ncarc[arcangle=90]{1,4}{1,5}
\ncarc[arcangle=90]{1,6}{1,7}
& $\rightarrow$ &
\begin{psmatrix}[colsep=0.24,rowsep=0.4]
\pscircle*{0.08} & \pscircle*{0.08} & \pscircle*{0.08} & \pscircle*{0.08} & 
\pscircle*{0.08} & \pscircle*{0.08} & \pscircle*{0.08} & \pscircle*{0.08}\\
\scriptsize{$k_1$}&&\scriptsize{$k_2$}&&\scriptsize{$k_3$}&&\scriptsize{$k_{n(Q)}$}&
\end{psmatrix}
\ncarc[arcangle=90]{1,1}{1,8}
\ncarc[arcangle=90]{1,2}{1,3}
\ncarc[arcangle=90]{1,4}{1,7}
\ncarc[arcangle=90]{1,5}{1,6}\\
$~$ & & & &
\end{tabular}
\end{center}
\caption{Illustration of the process used in the proof of theorem \ref{thm:classification} to build a block $Q$ of maximal size from the sequence $k_1 \cdots k_{n(Q)}$ with $n(Q)=4$. On the left is the dense subset associated to this sequence; in the middle is the block $Q'$ associated to the sequence $(k_1+1)\cdots(k_{n(Q)-1}+1)k_1\cdots k_{n(Q)}$; on the right is the block $Q$. The dense set $Q''$ is obtained from $Q$ by removing the reflection represented by the arc joining $k_1$ to $k_{n(Q)}+1$.} 
\label{figure:proc}
\end{figure}
\section{Quasi-coherent sheaves on Weyl lines}

\subsection{Regular functions}
Let $R$ be the algebra of regular functions on $V$ and $\bar{R}$ be the algebra of regular functions on $Z$. Notice that $R\twoheadrightarrow \bar{R}$. For each subset $J\subset T$, we write $R_J$ for the algebra of regular functions on the union of Weyl lines transverse to any element in $J$. If the reflection considered are simple, we will write $R_i$ instead of $R_{\{s_i\}}$, $R_{i,j}$ instead of $R_{\{s_i,s_j\}}$, etc.

We denote by $f_k$ an element of $R$ or $\bar{R}$ which is an equation of the reflecting hyperplane $H_{s_k}$. We will often abuse notation and write $f_i$ for $f_i|_X$ where $X$ is a subvariety of $Z$.

If $X\subset V$ is a Zariski closed subset which is $t$-stable for $t\in T$, then $t$ induces a map $\mathcal{O}(X)\rightarrow\mathcal{O}(X)$ and one has a decomposition into eigenspaces $\mathcal{O}(X)=\mathcal{O}(X)^t\oplus \mathcal{O}(X)^t f_t$ where $f_t$ is an equation of the reflecting hyperplane $H_t$. If moreover no irreducible component of $X$ lies in $H_t$, then the Demazure operator $\partial_t : R\rightarrow R, f\mapsto (2f_t)^{-1}(f-tf)$ induces a map $\mathcal{O}(X)\rightarrow\mathcal{O}(X)$ and as $R^t$-modules $\mathcal{O}(X)^t\xrightarrow{\sim} \mathcal{O}(X)^t f_t$ where the isomorphism is given by multiplication by $f_t$ and its inverse by the restriction of $\partial_t$. 

\begin{rmq}\label{rmq:noninvariant}
A consequence of corollary \ref{cor:ni} which will be crucial further is the following : suppose $W\cap V_i$ is not $s_i$-invariant. Then $W\cap V_i\subset H_t$ for some $t\in T$ such that $t s_i\neq s_i t$. Then $t=(i, k)$ or $(i+1, k)$ for some $k\neq i,i+1$, say $t=(i+1, k)$. Suppose $k<i$. In $H_t$ one has
$$f_k+f_{k+1}+\dots+ f_{i}=0,$$
\noindent hence
$$f_i=-2 f_k-\cdots -2 f_{i-1} -f_i.$$
Viewing the right hand side in $R_i$ one sees that it lies in $R_i^{s_i}$. One can do the same for the other cases (the case where $k>i+1$ and the cases where $t=(i, k)$). Since $R_i=R_i^{s_i}\oplus R_i^{s_i} f_i$ one has that $R_i^{s_i}\twoheadrightarrow \mathcal{O}(W\cap V_i)$. In other words, when choosing a function $f$ in $R_i$ such that $f|_{W\cap V_i}$ is equal to a given $g\in \mathcal{O}(W\cap V_i)$, one can always suppose $f$ is $s_i$-invariant. 
\end{rmq}

\subsection{Graduations}

The Temperley-Lieb algebra will be realized via $(\bar{R},\bar{R})$-bimodules. Now in order to interpret the parameter in a categorification of the Temperley-Lieb algebra, the bimodules we will consider need to be $\mathbb{Z}$-graded. If $A, B$ are two $\mathbb{Z}$-graded rings, we write $A-\mathrm{mod}-B$ for the category of $A\otimes B^{\mathrm{op}}$-modules (that we will call "$(A, B)$-bimodules") and $A-\mathrm{mod}_\mathbb{Z}-B$ for the category of $\mathbb{Z}$-graded $A\otimes B^{\mathrm{op}}$-modules (that we will call "graded $(A,B)$-bimodules") with morphisms the bimodule morphisms that are homogeneous of degree zero. In all the cases we will consider in this document, $A$ and $B$ will be commutative rings, hence both operations give left or right-module structures. However, to distinguish the operations for example in case $A=B$, we will always refer to the operation of $A$ as the "left" operation and the operation of $B$ as the "right" operation on a $(A,B)$-bimodule $M$.   

\begin{nota}
If $M\in A-\mathrm{mod}_{\mathbb{Z}}-B$, we write $M[k]$ for the bimodule equal to $M$ in $A-\mathrm{mod}-B$ but with graduation shifted by $k$, that is, $(M[k])_i=M_{i+k}$.
\end{nota}

The algebra $R$ of regular functions on $V$ is naturally graded ; we use the convention that it is positively graded with $R_2=V^*$. Now $I(Z)$ is the intersection of the ideals of all the Weyl lines and the ideal of a line is homogeneous ; hence $I(Z)$ is also homogeneous and $\bar{R}$ inherits a $\mathbb{Z}$-grading from $R$. From now on the word "graded" will always mean "$\mathbb{Z}$-graded". 

\begin{lem}\label{lem:stroppel}
Let $A, B, C$ be graded rings, let $M\in A-\mathrm{mod}_{\mathbb{Z}}-B$ and $N\in B-\mathrm{mod}_{\mathbb{Z}}-C$. Then $M\otimes_B N$ lies in $A-\mathrm{mod}_{\mathbb{Z}}-C$.
\end{lem}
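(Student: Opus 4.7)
The plan is to equip $M\otimes_B N$ with a grading by putting
$$(M\otimes_B N)_k := \Bigl\langle m\otimes n \;\Big|\; m\in M_i,\ n\in N_j,\ i+j=k\Bigr\rangle,$$
i.e.\ the $\mathbb{Z}$-submodule generated by the images of homogeneous elementary tensors whose degrees sum to $k$. The claim is that this decomposes $M\otimes_B N$ as a direct sum, and that the $A$- and $C$-actions respect it.

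First I would check that the subgroups above sum to the whole tensor product and that the sum is direct. Surjectivity is immediate since $M=\bigoplus_i M_i$ and $N=\bigoplus_j N_j$, so every elementary tensor $m\otimes n$ is a finite sum of homogeneous ones. For directness, the standard method is to construct the grading as a quotient of a graded free object: form the graded tensor product $M\otimes_{\mathbb{Z}} N$, which is graded by $(M\otimes_{\mathbb{Z}} N)_k=\bigoplus_{i+j=k}M_i\otimes_{\mathbb{Z}} N_j$, and observe that $M\otimes_B N$ is the quotient by the subgroup $K$ generated by the relators $mb\otimes n - m\otimes bn$. The key point is that $K$ is a \emph{homogeneous} subgroup: if $m\in M_i$, $b\in B_p$, $n\in N_j$, then $mb\in M_{i+p}$ and $bn\in N_{p+j}$, so both $mb\otimes n$ and $m\otimes bn$ lie in degree $i+p+j$. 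Hence $K=\bigoplus_k (K\cap (M\otimes_{\mathbb{Z}} N)_k)$, and the quotient inherits a $\mathbb{Z}$-grading which agrees with the description above.

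Next I would verify that the left $A$-action and right $C$-action are graded of degree zero. If $a\in A_r$ and $m\otimes n$ is a homogeneous simple tensor of degree $i+j=k$, then $a\cdot(m\otimes n)=(am)\otimes n$ with $am\in M_{r+i}$, giving an element of degree $r+i+j=r+k$, as required; the argument for $C$ on the right is identical. Bilinearity of the actions then extends this to all of $M\otimes_B N$.

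The only substantive point is the homogeneity of the defining relators; everything else is formal. I expect no real obstacle, but I would be careful to state the construction via $M\otimes_{\mathbb{Z}} N$ modulo a homogeneous ideal rather than trying to argue directness of the candidate decomposition directly, since a priori one might worry that a relation could force a nonzero homogeneous-looking element to vanish in an inhomogeneous way.
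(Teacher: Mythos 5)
Your proof is correct and takes the same route as the paper, which simply cites Stroppel's Lemma 1.2 (remarking that the graded decomposition built there for a one-sided module carries over unchanged when $N$ has an extra right action); what you have written out is precisely the standard construction behind that citation — realize $M\otimes_B N$ as the quotient of the graded abelian group $M\otimes_{\mathbb{Z}}N$ by the homogeneous subgroup of relators $mb\otimes n - m\otimes bn$ and then check the $A$- and $C$-actions preserve degree.
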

\begin{proof}
See \cite{Stroppel} lemma $1.2$, where $N$ has only a left-module structure : the graded decomposition $B=\bigoplus B_i$ of the tensor product as left module which is built in the proof of this lemma is also a graded decomposition in case we have an additional right-module structure on $N$ and hence on the tensor product, so the same proof can be given in our case. 
\end{proof}

\begin{lem}\label{lem:annulateur}
Let $A, B, C$ be (graded) rings, $f:C\rightarrow A$ a morphism of (graded) rings, $\pi:A\twoheadrightarrow A'$ a quotient of $A$ (by an homogeneous ideal), $M$ a (graded) module in 
$B-\mathrm{mod}-C$. Let $I\subset A$ be an (homogeneous) ideal which annihilates $M\otimes_C A'$ on the right. Then one has an isomorphism in $B-\mathrm{mod}-C$, resp. $B-\mathrm{mod}_{\mathbb{Z}}-C$ 
$$M\otimes_C A'\cong M\otimes_C (A'/\pi(I)).$$
\end{lem}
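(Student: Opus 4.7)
The plan is to derive the isomorphism from right-exactness of the tensor product, together with the annihilation hypothesis. First I would record the short exact sequence of (graded) left $C$-modules
$$0\longrightarrow \pi(I)\longrightarrow A'\xrightarrow{\ q\ } A'/\pi(I)\longrightarrow 0,$$
obtained by restriction of scalars along $\pi\circ f:C\to A'$. Note $\pi(I)$ is an ideal of $A'$, hence a $C$-submodule, and in the graded setting the homogeneity of $I$ makes $\pi(I)$ homogeneous, so the sequence is a sequence in $C\mathrm{-mod}_{\mathbb{Z}}$. Applying the right-exact functor $M\otimes_C -$ gives an exact sequence of (graded) $(B,C)$-bimodules
$$M\otimes_C \pi(I)\xrightarrow{\ \iota\ } M\otimes_C A'\xrightarrow{\ \mathrm{id}\otimes q\ } M\otimes_C (A'/\pi(I))\longrightarrow 0,$$
where Lemma \ref{lem:stroppel} ensures the gradings behave correctly and that $\mathrm{id}\otimes q$ has degree zero.

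The heart of the argument is to show that $\iota=0$. For this I would use the natural right $A$-action on $M\otimes_C A'$ extending the right $C$-action, given on pure tensors by $(m\otimes a')\cdot a = m\otimes a'\pi(a)$; this is well-defined because the $C$-balancing uses $\pi\circ f$ and $A'$ is an $(A',A')$-bimodule. Under this action, any generator of $\mathrm{Im}(\iota)$ can be rewritten as
$$m\otimes \pi(i)\;=\;(m\otimes 1_{A'})\cdot i\qquad (m\in M,\ i\in I).$$
Since by hypothesis $I$ annihilates $M\otimes_C A'$ on the right, the right-hand side vanishes. As $\mathrm{Im}(\iota)$ is generated as an abelian (resp.\ graded) subgroup by such elements, it is zero, and therefore $\mathrm{id}\otimes q$ is an isomorphism of (graded) $(B,C)$-bimodules.

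I do not expect a serious obstacle: this is essentially the formal identity
$$\frac{M\otimes_C A'}{(M\otimes_C A')\cdot I}\;\cong\;M\otimes_C (A'/\pi(I)),$$
and the only mildly delicate point is keeping straight which ring is acting on which side when invoking right-exactness and the annihilation hypothesis. The graded case requires no extra work beyond noting that $I$, and hence $\pi(I)$, is homogeneous so every map above is of degree zero.
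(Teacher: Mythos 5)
Your proof is correct, and the key computation — that $m\otimes\pi(i)=(m\otimes 1_{A'})\cdot i=0$ by the annihilation hypothesis — is exactly the one the paper uses. The difference is purely one of packaging: the paper constructs the surjection $\varphi=\mathrm{id}\otimes\psi$ and then writes down an explicit inverse $\varphi'(m\otimes\bar n)=m\otimes n$, using the computation above to verify that $\varphi'$ is well-defined; you instead invoke right-exactness of $M\otimes_C -$ applied to $0\to\pi(I)\to A'\to A'/\pi(I)\to 0$ and use the same computation to show the connecting map $\iota$ vanishes, so that $\mathrm{id}\otimes q$ has trivial kernel. These are two presentations of the same argument (well-definedness of $\varphi'$ is precisely the statement that $\ker(\mathrm{id}\otimes q)=\mathrm{Im}(\iota)=0$), and your functorial framing is arguably cleaner since it avoids verifying by hand that an element-wise formula descends to the tensor product. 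Both handle the graded case by the same observation that all maps involved are degree zero because $I$, hence $\pi(I)$, is homogeneous.
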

\begin{proof} Write $\psi:A'\twoheadrightarrow A'/\pi(I)$ for the canonical surjection and define a map $\varphi: M\otimes_C A'\rightarrow M\otimes_C (A'/\pi(I))$ by $\varphi(m\otimes n)=m\otimes\psi(n)$. It is well defined and defines a $(B, A)$-bimodule homomorphism. It is clearly surjective. Conversely define a map $\varphi': M\otimes_C (A'/\pi(I))\rightarrow M\otimes_C A'$ by setting $\varphi'(m\otimes \bar(n))=m\otimes n$, where $n$ is such that $\psi(n)=\bar{n}$. It is well defined since if $n'\neq n$ are such that $\psi(n')=\psi(n)$, then $n'-n\in \pi(I)$, hence one has for any $a\in I$ with $\pi(a)=n'-n$:
$$m\otimes n'-m\otimes n=(m\otimes 1)\cdot a=0,$$
because $a$ lies in the annihilator of $M\otimes_C A'$. The map $\varphi$ is a morphism of $(B,A)$-bimodules which is an inverse to $\varphi$. The proof works in the graded case thanks to lemma \ref{lem:stroppel} and thanks to the fact that $A'/\pi(I)$ inherits a grading from $A'$ because $\ker\pi$ is homogeneous and the morphisms we defined are all homogeneous of degree $0$.
\end{proof} 
\begin{lem}\label{lem:tensorgraded}
Let $W\in\mathcal{V}_n$. Then $\mathcal{O}(W)$ is graded. 
\end{lem}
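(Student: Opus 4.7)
The plan is to reduce the statement to the fact that every Weyl line is a homogeneous subvariety of $V$ (being a one-dimensional linear subspace through the origin), and then observe that $W$ is a union of Weyl lines so that its ideal is an intersection of homogeneous ideals.

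More concretely, I would first invoke Proposition \ref{prop:reflexion} to write $W=\bigcap_{t\in T_W}V_t$ with $T_W$ a set of pairwise commuting reflections. Since each $V_t$ is by definition the union of the Weyl lines transverse to $t$, the intersection $W$ is itself a union of Weyl lines (namely those transverse to every reflection in $T_W$). A Weyl line $L$ is by definition a one-dimensional linear subspace of $V$ which is an intersection of reflecting hyperplanes; hence $L$ is defined by linear equations and the ideal $I(L)\subset R$ is generated by homogeneous elements of degree $1$, i.e. $I(L)$ is a homogeneous ideal of $R$.

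Writing $W=\bigcup_{L\subset W} L$ where $L$ runs over the Weyl lines contained in $W$, one has
$$I(W)=\bigcap_{L\subset W} I(L)\subset R.$$
An intersection of homogeneous ideals is homogeneous, so $I(W)$ is a homogeneous ideal of $R$. In particular, writing $\pi:R\twoheadrightarrow\bar{R}$ for the quotient by the (homogeneous) ideal $I(Z)$ and noting $I(Z)\subset I(W)$, one gets that $\mathcal{O}(W)=R/I(W)=\bar{R}/\pi(I(W))$, where $\pi(I(W))$ is a homogeneous ideal of $\bar{R}$. Therefore $\mathcal{O}(W)$ inherits a $\mathbb{Z}$-grading from $\bar{R}$ (equivalently, from $R$).

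There is no real obstacle here: the only point to be careful about is that $W$ is genuinely a union of Weyl lines rather than something containing lower-dimensional pieces, but this is immediate from the description $W=\bigcap_{t\in T_W}V_t\subset Z$ together with the fact that $Z$ is by definition the union of all Weyl lines. The homogeneity of $I(W)$ then follows formally.
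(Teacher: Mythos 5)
Your proof is correct and takes essentially the same approach as the paper: the paper's one-line argument also reduces to the fact that $W$ is a union of Weyl lines, whose ideals are homogeneous, so that $I(W)$ is an intersection of homogeneous ideals and hence homogeneous. You have merely spelled out the details (in particular justifying that $W$ is a union of Weyl lines via Proposition~\ref{prop:reflexion}, though this already follows from $W\subset Z$), which the paper leaves implicit.
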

\begin{proof}
Since $W$ is a union of Weyl lines its vanishing ideal is homogeneous as it is an intersection of ideals of lines (which are known to be homogeneous).
\end{proof}
\begin{rmq}\label{rmq:grad}
Putting \ref{lem:stroppel}, \ref{lem:annulateur} and \ref{lem:tensorgraded} together we have the following: if $M\in \bar{R}-\mathrm{mod}_{\mathbb{Z}}-\bar{R}$, $W\in\mathcal{V}_n$ and if the right operation of $\bar{R}$ on $M$ factors through $\mathcal{O}(V_M)$ where $V_M\in\mathcal{V}_n$ (in other words, $M$ can be viewed in $\bar{R}-\mathrm{mod}-\mathcal{O}(V_M)$), then $M$ lies in $\bar{R}-\mathrm{mod}_{\mathbb{Z}}-\mathcal{O}(V_M)$ and
$$B:=M\otimes_{\mathcal{O}(V_M)} \mathcal{O}(V_M\cap W)$$
\noindent lies in $\bar{R}-\mathrm{mod}_{\mathbb{Z}}-\bar{R}$.
\end{rmq}
\begin{lem}\label{lem:bi}
Let $i\in\{1,\dots,n\}$. The bimodule $B_i:=R_i\otimes_{R_i^{s_i}} R_i$ is graded. It is free as left $R_i$-module and as right $R_i$-module. 
\end{lem}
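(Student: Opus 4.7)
The plan has three parts: (a) grading, (b) freeness as a left module, (c) freeness as a right module.

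For the grading, I would first observe that $V_i$ is a union of Weyl lines (those not contained in $H_{s_i}$), so its vanishing ideal is an intersection of homogeneous ideals and is therefore homogeneous. Hence $R_i$ inherits a $\mathbb{Z}$-grading from $R$, exactly as for $\bar R$. Since $s_i$ acts linearly on $V$ it preserves the grading on $R_i$, so $R_i^{s_i}$ is a homogeneous subring of $R_i$. Lemma \ref{lem:stroppel} then gives that $B_i=R_i\otimes_{R_i^{s_i}} R_i$ is a graded $(R_i,R_i)$-bimodule.

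For the freeness, I would invoke the eigenspace decomposition recalled just before Remark \ref{rmq:noninvariant}: if $X$ is a Zariski-closed $t$-stable subset of $V$ no irreducible component of which lies in $H_t$, then $\mathcal{O}(X)=\mathcal{O}(X)^t\oplus \mathcal{O}(X)^t f_t$. I would apply this to $X=V_i$ and $t=s_i$: the variety $V_i$ is $s_i$-stable, since $s_i$ permutes Weyl lines and preserves transversality to $s_i$, and by construction no component (Weyl line) of $V_i$ is contained in $H_{s_i}$. This gives the decomposition
\[
R_i=R_i^{s_i}\oplus R_i^{s_i} f_i
\]
of $R_i$ as a free $R_i^{s_i}$-module of rank $2$ with basis $\{1,f_i\}$.

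Tensoring on the right by $R_i$ over $R_i^{s_i}$ preserves this direct-sum decomposition, so
\[
B_i = R_i\otimes_{R_i^{s_i}} R_i \;\cong\; R_i\otimes_{R_i^{s_i}} R_i^{s_i}\;\oplus\; R_i\otimes_{R_i^{s_i}} R_i^{s_i}f_i \;\cong\; R_i\oplus R_i
\]
as a left $R_i$-module, with explicit basis $\{1\otimes 1,\ 1\otimes f_i\}$. A symmetric argument, using the same decomposition on the left factor, shows $B_i$ is free of rank $2$ as a right $R_i$-module with basis $\{1\otimes 1,\ f_i\otimes 1\}$.

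There is no real obstacle here; the only point that needs a line of verification is the $s_i$-stability of $V_i$ (and the absence of components in $H_{s_i}$), in order to legitimately apply the eigenspace decomposition of \ref{rmq:noninvariant}'s preamble. Everything else is a direct consequence of Lemma \ref{lem:stroppel} and the freeness of $R_i$ over $R_i^{s_i}$.
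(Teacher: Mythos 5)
Your proof is correct and follows essentially the same route as the paper: both establish the grading by noting $R_i^{s_i}$ is a graded subring and invoking Lemma \ref{lem:stroppel}, and both deduce freeness from the eigenspace decomposition $R_i=R_i^{s_i}\oplus R_i^{s_i}f_i$. You simply spell out more explicitly the hypotheses ($s_i$-stability of $V_i$, no component in $H_{s_i}$) needed to justify that decomposition, which the paper leaves implicit.
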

\begin{proof}
Since $s_i$ preserves the degrees $R_i^{s_i}$ is a graded subring of $R_i$ and so $R_i$ lies in $R_i-\mathrm{mod}_{\mathbb{Z}}-R_i^{s_i}$ and in $R_i^{s_i}-\mathrm{mod}_{\mathbb{Z}}-R_i$. Then apply lemma \ref{lem:stroppel}.

The fact that the bimodule $B_i$ is free as left $R_i$-module and as right $R_i$-module is a consequence of the decomposition $R_i=R_i^{s_i}\oplus R_i^{s_i} f_i$
\end{proof}
The bimodules $B_i$ as defined in the above lemma are the equivalent of the Soergel bimodules $R\otimes_{R^{s}} R$ used in \cite{S} to categorify the Kazhdan-Lusztig basis of the Hecke algebra of an arbitrary Coxeter system of finite rank. 
\subsection{Elementary bimodules}

\begin{lem}\label{lem:free}
The ring $R_{i,i+1}$ of regular functions on $V_i\cap V_{i+1}$ is a free $R_i^{s_i}$-module of rank $1$ and a free $R_{i+1}^{s_{i+1}}$-module of rank $1$. 
\end{lem}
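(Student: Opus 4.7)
The plan is to prove the first claim (freeness over $R_i^{s_i}$); the second statement will follow from an identical argument with the roles of $i$ and $i+1$ exchanged. The strategy is to show that the restriction map $R_i \to R_{i,i+1}$ factors through the inclusion $R_i^{s_i} \hookrightarrow R_i$ and induces an \emph{isomorphism} of $R_i^{s_i}$-modules $R_i^{s_i} \xrightarrow{\sim} R_{i,i+1}$; surjectivity and injectivity will be handled separately.

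For surjectivity, I would take $W = V_{i+1}\in\mathcal{V}_n$, whose associated set of reflections is $\{s_{i+1}\}$. Since $s_i s_{i+1}\neq s_{i+1} s_i$, the identity $s_i V_{i+1}=V_{s_is_{i+1}s_i}\neq V_{i+1}$ shows that $V_{i+1}$ is not $s_i$-invariant, so by Corollary \ref{cor:ni} the intersection $V_i\cap V_{i+1}$ is not $s_i$-invariant either. Remark \ref{rmq:noninvariant} then applies and gives precisely the surjection $R_i^{s_i}\twoheadrightarrow R_{i,i+1}$ obtained by composing the inclusion $R_i^{s_i}\hookrightarrow R_i$ with the restriction $R_i\twoheadrightarrow R_{i,i+1}$.

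For injectivity, the key observation is that $V_i$ is covered by $V_i\cap V_{i+1}$ and its $s_i$-translate. Indeed, applying Lemma \ref{lem:hyperebene} to the three pairwise non-commuting reflections $t=s_i$, $t'=s_{i+1}$, $t''=s_is_{i+1}s_i$ yields
$$V_i=V_i\cap(V_{i+1}\cup V_{s_is_{i+1}s_i})=(V_i\cap V_{i+1})\cup s_i(V_i\cap V_{i+1}),$$
where we used $s_iV_i=V_i$ and $s_iV_{i+1}=V_{s_is_{i+1}s_i}$. Consequently, if $f\in R_i^{s_i}$ restricts to zero on $V_i\cap V_{i+1}$, then $s_i$-invariance forces $f$ to vanish on $s_i(V_i\cap V_{i+1})$ as well, hence on all of $V_i$, so $f=0$ in $R_i^{s_i}$. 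This establishes injectivity and completes the proof of the first statement.

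The second statement follows by the symmetric argument: exchanging $i$ and $i+1$, one applies Remark \ref{rmq:noninvariant} with $W=V_i$ (whose non-$s_{i+1}$-invariance follows as before) to get surjectivity $R_{i+1}^{s_{i+1}}\twoheadrightarrow R_{i,i+1}$, and applies Lemma \ref{lem:hyperebene} to the triple $s_{i+1}, s_i, s_{i+1}s_is_{i+1}$ to get the covering $V_{i+1}=(V_i\cap V_{i+1})\cup s_{i+1}(V_i\cap V_{i+1})$, whence injectivity. There is no real obstacle in this proof; the one step that might look subtle — that a regular function on $V_i$ which is $s_i$-invariant and vanishes on $V_i\cap V_{i+1}$ must vanish identically — is exactly what the covering identity provided by Lemma \ref{lem:hyperebene} is designed to handle.
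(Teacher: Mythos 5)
Your proof is correct and matches the paper's in structure: injectivity via the covering $V_i=(V_i\cap V_{i+1})\cup s_i(V_i\cap V_{i+1})$ supplied by Lemma \ref{lem:hyperebene} (Example \ref{ex:voisin}), and surjectivity from the failure of $s_i$-invariance of $V_i\cap V_{i+1}$. The only cosmetic difference is that you invoke Remark \ref{rmq:noninvariant} for surjectivity, while the paper carries out the underlying computation directly (writing $-f_i$ as the restriction of the $s_i$-invariant element $2f_{i+1}+f_i$ using $f_i+f_{i+1}=0$ on $V_i\cap V_{i+1}$) -- the content is the same.
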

\begin{proof}
Since $V_i$ is $s_i$-stable we have a decomposition $R_i=R_i^{s_i}\oplus R_i^{s_i} f_i$. Since $R_i\twoheadrightarrow R_{i,i+1}$ if follows that $R_{i,i+1}$ is generated by $1$ and $f_i$ as a $R_i^{s_i}$-module. Thanks to the preceding lemma, $V_i\cap V_{i+1}\subset H_{s_i s_{i+1} s_i}$ and hence $f_i+f_{i+1}=0$ in $R_{i,i+1}$. It follows that the element $2f_{i+1}+f_i\in R_i^{s_i}$ applied on $1\in R_{i,i+1}$ yields $-f_i$ and hence that $R_{i,i+1}$ is generated as a $R_i^{s_i}$-module by $1$. It remains to show that if $f\in R_i^{s_i}$, $f\cdot 1=f|_{V_i\cap V_{i+1}}=0$ implies $f=0$. Since $f$ is $s_i$-invariant it is enough to show that
$$(V_i\cap V_{i+1})\cup s_i(V_i\cap V_{i+1})=V_i.$$
But this holds thanks to example \ref{ex:voisin}. The proof of the second statement is similar.
%If a Weyl line $L$ is transverse to $s_i$, $s_i L$ is again a Weyl line transverse to $s_i$. Let $L$ be a Weyl line which is transverse to $s_i$ ; if $L$ is not transverse to $s_{i+1}$, then by definition $L\subset H_{s_{i+1}}$ and this forces $L\not\subset H_{s_i s_{i+1} s_i}$ since otherwise one would get $L\subset H_{s_i}$ contradicting our assumption that $L$ is transverse to $s_i$. Then $L\not\subset H_{s_i s_{i+1} s_i}$ implies $s_i L\not\subset H_{s_{i+1}}$. We have shown that if $L\subset V_i$, $L\subset V_i\cap V_{i+1}$ or $s_i L\subset V_i\cap V_{i+1}$ which gives the second inclusion. The same proof gives the result for the $R_{i+1}^{s_{i+1}}$-module structure. 
\end{proof}
%\begin{rmq} Since $V_i$ is an $s_i$-stable closed subset of $Z$, we know that $R_i=R_i^{s_i}\oplus R_i^{s_i} f_i$. Moreover, $R_i$ is free over $R_i^{s_i}$ : if $r\in R_i^{s_i}$ is such that $r\cdot f_i=0$ in $R_i$, then $r$ has to vanish on $V_i$ since $f_i(v)\neq 0$ for $v\in V_i-\{0\}$. 
%\end{rmq}
\begin{cor}
As a left $R_i^{s_i}$-module, $R_{i,i+1}\otimes_{R_{i+1}^{s_{i+1}}} R_{i+1}$ is free of rank $2$. Similarly as a right $R_{i+1}^{s_{i+1}}$-module, $R_i\otimes_{R_i^{s_i}} R_{i,i+1}$ is free of rank $2$.
\end{cor}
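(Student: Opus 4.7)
The plan is to exhibit an explicit rank-$2$ basis of each tensor product by a two-step reduction: first splitting along an eigenspace decomposition of the form $R = R^{s}\oplus R^{s}f$, then descending from $R_{i,i+1}$ to the appropriate invariant subring via Lemma~\ref{lem:free}.

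For the first assertion, I would start from the decomposition $R_{i+1} = R_{i+1}^{s_{i+1}} \oplus R_{i+1}^{s_{i+1}} f_{i+1}$, which is available because $V_{i+1}$ is $s_{i+1}$-stable and no irreducible component of it lies in $H_{s_{i+1}}$. Tensoring on the left with $R_{i,i+1}$ over $R_{i+1}^{s_{i+1}}$ immediately yields
\[
R_{i,i+1}\otimes_{R_{i+1}^{s_{i+1}}} R_{i+1} \;\cong\; R_{i,i+1}\cdot(1\otimes 1)\;\oplus\;R_{i,i+1}\cdot(1\otimes f_{i+1}),
\]
a free left $R_{i,i+1}$-module of rank $2$. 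Then I would apply Lemma~\ref{lem:free}, which states that $R_{i,i+1}$ is free of rank $1$ as a left $R_i^{s_i}$-module, to conclude that $\{1\otimes 1,\ 1\otimes f_{i+1}\}$ is a free $R_i^{s_i}$-basis of the tensor product, establishing rank $2$.

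The second statement is proved symmetrically: I would decompose $R_i = R_i^{s_i}\oplus R_i^{s_i} f_i$ to exhibit $R_i\otimes_{R_i^{s_i}} R_{i,i+1}$ as a free right $R_{i,i+1}$-module of rank $2$ with basis $\{1\otimes 1,\ f_i\otimes 1\}$, and then invoke the rank-$1$ freeness of $R_{i,i+1}$ as a right $R_{i+1}^{s_{i+1}}$-module from the same lemma.

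I do not anticipate any real obstacle; the argument is a routine concatenation of already-established freeness statements. The only mild subtlety worth checking is that the two successive decompositions are compatible, i.e.\ that the chosen two-element set remains a free basis after descent, but this is immediate because each individual decomposition is a genuine direct-sum splitting of modules over the intermediate ring $R_{i,i+1}$ (resp.\ $R_i$).
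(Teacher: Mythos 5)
Your argument is correct and coincides with the paper's own proof: both combine Lemma \ref{lem:free} (rank-$1$ freeness of $R_{i,i+1}$ over the invariant subring) with the eigenspace decomposition $R_{i+1}=R_{i+1}^{s_{i+1}}\oplus R_{i+1}^{s_{i+1}}f_{i+1}$. The only difference is expository — you spell out the basis and the two-step descent, while the paper states the two ingredients and says "the claim follows."
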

\begin{proof}
Thanks to lemma \ref{lem:free}, $R_{i,i+1}\cong R_i^{s_i}$ as a left $R_i^{s_i}$-module. Since $R_ {i+1}=R_ {i+1}^{s_{i+1}}\oplus R_ {i+1}^{s_{i+1}} f_{i+1}$, the claim follows. 
\end{proof}
\begin{cor}\label{cor:nachbar}
The bimodule $B_{i,i+1}:=R_i\otimes_{R_i^{s_i}} R_{i,i+1} \otimes_{R_{i+1}^{s_{i+1}}} R_{i+1}$ which lies in $R_i-\mathrm{mod}_{\mathbb{Z}}-R_{i+1}$ is free of rank $2$ in $R_i-\mathrm{mod}$ and free of rank $2$ in $\mathrm{mod}-R_{i+1}$. In particular, if we view $B_{i,i+1}$ in $\bar{R}-\mathrm{mod}_{\mathbb{Z}}-\bar{R}$, then the left annihilator of $B_{i,i+1}$ in $\bar{R}$ is the ideal of functions vanishing on $V_i$ and its right annihilator is the ideal of functions vanishing on $V_{i+1}$.
\end{cor}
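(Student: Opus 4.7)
The plan is to deduce the freeness of $B_{i,i+1}$ as a one-sided module directly from the preceding corollary by grouping the tensor products, and then to read off the two annihilators from the fact that a free module over a quotient ring is faithful over that quotient.

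First I would write $B_{i,i+1}=R_i\otimes_{R_i^{s_i}}M$ where $M:=R_{i,i+1}\otimes_{R_{i+1}^{s_{i+1}}} R_{i+1}$. By the preceding corollary $M$ is free of rank $2$ as a left $R_i^{s_i}$-module, so extending scalars yields
\[
B_{i,i+1}\;\cong\;R_i\otimes_{R_i^{s_i}}(R_i^{s_i})^{\oplus 2}\;\cong\;R_i^{\oplus 2}
\]
as left $R_i$-modules. Symmetrically, writing $B_{i,i+1}=N\otimes_{R_{i+1}^{s_{i+1}}}R_{i+1}$ with $N:=R_i\otimes_{R_i^{s_i}}R_{i,i+1}$ and using that $N$ is free of rank $2$ as a right $R_{i+1}^{s_{i+1}}$-module (again by the preceding corollary), I obtain that $B_{i,i+1}\cong R_{i+1}^{\oplus 2}$ as a right $R_{i+1}$-module. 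The grading on $B_{i,i+1}$ is automatic from Lemma~\ref{lem:stroppel} since $R_i$, $R_i^{s_i}$, $R_{i,i+1}$, $R_{i+1}^{s_{i+1}}$ and $R_{i+1}$ are all graded (the subvarieties involved are unions of Weyl lines, cf.~Lemma~\ref{lem:tensorgraded}, and $s_i$, $s_{i+1}$ preserve degrees).

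For the annihilator statement, viewed in $\bar{R}-\mathrm{mod}_{\mathbb{Z}}-\bar{R}$ via the surjection $\bar{R}\twoheadrightarrow R_i$, the left action of $\bar{R}$ on $B_{i,i+1}$ factors through this surjection (the leftmost tensor factor is $R_i$). Because $B_{i,i+1}\cong R_i^{\oplus 2}$ is free, and a nonzero free module over a commutative ring $A$ has zero $A$-annihilator, the left annihilator of $B_{i,i+1}$ as an $R_i$-module is trivial. Consequently the left annihilator in $\bar{R}$ is precisely the kernel of $\bar{R}\twoheadrightarrow R_i$, namely the ideal of regular functions on $Z$ vanishing on $V_i$. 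The argument for the right annihilator is strictly symmetric, using the freeness over $R_{i+1}$ and the factoring $\bar{R}\twoheadrightarrow R_{i+1}$.

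There is no real obstacle here: the whole content of the corollary is packaged in the associativity of tensor product plus the previous corollary. The only point that could be mishandled is the bookkeeping of the grading, and this is taken care of by the general results of the previous subsection.
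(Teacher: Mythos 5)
Correct, and essentially the paper's own argument: the paper's proof spells out only the left base change (extending scalars from $R_i^{s_i}$ to $R_i$ applied to the rank-$2$ module $R_{i,i+1}\otimes_{R_{i+1}^{s_{i+1}}}R_{i+1}$). You have additionally carried out the symmetric right-hand version and made explicit the faithfulness-of-free-modules observation underlying the annihilator claim, which the paper's proof leaves implicit.
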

\begin{proof}
Thanks to the preceding lemma, $R_{i,i+1} \otimes_{R_{i+1}^{s_{i+1}}} R_{i+1}$ is free as a left $R_i^{s_i}$-module. Since $R_i=R_i^{s_i}\oplus R_i^{s_i} f_i$, it follows that $R_i\otimes_{R_i^{s_i}} R_{i,i+1} \otimes_{R_{i+1}^{s_{i+1}}} R_{i+1}$ is free as a left $R_i$-module. 
\end{proof}
We now study bimodules $B_{i,j}$ as defined in corollary \ref{cor:nachbar} but for $|i-j|>1$. Notice that $R_{i,j}\otimes_{R_j^{s_j}} R_j$ is free as left $R_{i,j}$-module since $R_j=R_j^{s_j}\oplus R_j^{s_j} f_j$. 
\begin{lem}
Any function $f\in R_j$ which vanishes on $V_i\cap V_j$ acts on $M:=R_ {i,j}\otimes_{R_j^{s_j}} R_j$ on the right by zero. In other words, the right operation of $R_j$ on $M$ gives rise to a right $R_{i,j}$-module structure on $M$. Moreover, $M$ is free as a right $R_{i,j}$-module.
\end{lem}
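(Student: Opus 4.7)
The strategy rests on one geometric observation: since $|i-j|>1$, the reflections $s_i$ and $s_j$ commute, so $s_j V_i=V_{s_js_is_j}=V_i$ and therefore $V_i\cap V_j$ is $s_j$-stable. Its irreducible components are Weyl lines transverse to $H_{s_j}$ by definition of $V_j$, so by the discussion opening subsection~2.1 we obtain an eigenspace decomposition $R_{i,j}=R_{i,j}^{s_j}\oplus R_{i,j}^{s_j}\bar{f_j}$ together with an isomorphism $R_{i,j}^{s_j}\xrightarrow{\sim} R_{i,j}^{s_j}\bar{f_j}$ given by multiplication by $\bar{f_j}$; in particular $\bar{f_j}$ is a non-zero-divisor in $R_{i,j}$.

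Given $f\in R_j$ vanishing on $V_i\cap V_j$, I decompose $f=f_0+f_1f_j$ with $f_0,f_1\in R_j^{s_j}$. Restricting to $V_i\cap V_j$ and invoking the decomposition above forces $\bar{f_0}=0$ and $\bar{f_1}\bar{f_j}=0$, and then $\bar{f_1}=0$ by non-zero-divisibility. For any elementary tensor $a\otimes b\in M$ one computes
\[
(a\otimes b)\cdot f=a\otimes bf_0+a\otimes bf_1f_j=(a\bar{f_0})\otimes b+(a\bar{f_1})\otimes bf_j=0,
\]
after sliding $f_0$ and $f_1$ (which lie in $R_j^{s_j}$) across the tensor. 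Hence the right $R_j$-action on $M$ factors through $R_{i,j}$.

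For freeness, the previous step shows that the natural map $M\to R_{i,j}\otimes_{R_j^{s_j}}R_{i,j}$, $a\otimes b\mapsto a\otimes\bar b$, is an isomorphism: it is surjective by right exactness, and any element in its kernel is a sum of tensors $a\otimes b$ with $b$ vanishing on $V_i\cap V_j$, hence zero in $M$ by what we just proved. In characteristic zero, averaging $g\mapsto (g+s_jg)/2$ shows that $R_j^{s_j}\twoheadrightarrow R_{i,j}^{s_j}$ is surjective, so this tensor product may equivalently be taken over $R_{i,j}^{s_j}$, giving $M\cong R_{i,j}\otimes_{R_{i,j}^{s_j}}R_{i,j}$. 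Applying the eigenspace decomposition to the \emph{left} factor splits $M$ as right $R_{i,j}$-module into $(1\otimes R_{i,j})\oplus(\bar{f_j}\otimes R_{i,j})$, free of rank two with basis $\{1\otimes 1,\bar{f_j}\otimes 1\}$.

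The main obstacle is to keep the roles of the left and right actions straight: the left freeness of $M$ recorded just before the statement is obtained by applying the decomposition of $R_j$ to the right tensor factor, but to produce a \emph{right} basis one has to transport the eigenspace decomposition to the \emph{left} factor, and this transport is only legitimate after one has factored the right $R_j$-action through $R_{i,j}$.
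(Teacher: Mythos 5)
Your proof is correct and follows essentially the same route as the paper: decompose $f = f_0 + f_1 f_j$ with $s_j$-invariant coefficients, use $s_j$-stability of $V_i\cap V_j$ and the fact that $\bar{f_j}$ does not vanish on any Weyl line of $V_i\cap V_j$ to force both restrictions to zero, slide the invariants across the tensor, and then for freeness pass to $R_{i,j}\otimes R_{i,j}$ and apply the eigenspace decomposition to the left factor. The only cosmetic differences are that the paper extracts $\bar{f_0}=0=\bar{f_1}\bar{f_j}$ by applying $s_j$ rather than quoting the direct-sum decomposition of $R_{i,j}$, and it invokes lemma~\ref{lem:annulateur} for the isomorphism $M\cong R_{i,j}\otimes_{R_j^{s_j}} R_{i,j}$ where you argue it by hand; also your parenthetical ``in particular $\bar{f_j}$ is a non-zero-divisor in $R_{i,j}$'' is a slight overstatement of what the isomorphism $R_{i,j}^{s_j}\xrightarrow{\sim} R_{i,j}^{s_j}\bar{f_j}$ literally gives, but since $\bar{f_1}\in R_{i,j}^{s_j}$ the isomorphism alone already suffices for your step.
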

\begin{proof}
Decompose $f$ as $r+r'f_j$ with $r, r'\in R_j^{s_j}$. By assumption one has
$$r|_{V_i\cap V_j}+r'|_{V_i\cap V_j} f_j|_{V_i\cap V_j}=0.$$
Now since $|i-j|>1$, $V_i\cap V_j$ is $s_j$-stable, giving rise to a natural operation of $s_j$ on $R_{i,j}$. Applying $s_j$ to the above equation one gets
$$r|_{V_i\cap V_j}-r'|_{V_i\cap V_j} f_j|_{V_i\cap V_j}=0,$$
which implies that $r|_{V_i\cap V_j}=0$ and $r'|_{V_i\cap V_j} f_j|_{V_i\cap V_j}=0$. Since $f_j(v)\neq 0$ for $v\in V_i\cap V_j-\{0\}$, this forces $r'|_{V_i\cap V_j}=0$. Hence if $v\otimes w\in R_{i,j}\otimes_{R_j^{s_j}} R_j$, one gets
$$(v\otimes w)\cdot f=vr|_{V_i\cap V_j}\otimes w+vr'|_{V_i\cap V_j}\otimes wf_j=0.$$
To see that $M$ is free on the right over $R_{i,j}$, one first uses lemma \ref{lem:annulateur} to get an isomorphism $M\cong R_{i,j}\otimes_{R_j^{s_j}} R_{i,j}$ and then concludes by using the decomposition $R_{i,j}=R_{i,j}^{s_j}\oplus R_{i,j}^{s_j} f_j$ which holds since $V_i\cap V_j$ is $s_j$-invariant. 
\end{proof}
\begin{prop}\label{prop:distant}
The bimodule $B_{i,j}:=R_i\otimes_{R_i^{s_i}} R_{i,j}\otimes_{R_j^{s_j}} R_j$ which lies in $R_{i,j}-\mathrm{mod}_{\mathbb{Z}}-R_{i,j}$ thanks to the preceding lemma is free of rank $4$ as left $R_{i,j}$-module and as right $R_{i,j}$-module. It particular the left annihilator of $B_{i,j}$ is equal to its right annihilator and is the ideal of functions vanishing on $V_i\cap V_j$.
\end{prop}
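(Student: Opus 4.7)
The plan is to reduce $B_{i,j}$ to an expression involving only $R_{i,j}$ and its subrings of invariants, and then to apply two copies of the standard rank-two decomposition $R_{i,j}=R_{i,j}^{s_k}\oplus R_{i,j}^{s_k} f_k$ (for $k=i,j$) to extract a free basis of rank~$4$.

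First I would handle the left side symmetrically to the preceding lemma. The preceding lemma shows that any $f\in R_j$ vanishing on $V_i\cap V_j$ acts by zero on the right of $R_{i,j}\otimes_{R_j^{s_j}} R_j$, hence also on the right of $B_{i,j}$; by swapping the roles of $i$ and $j$ (which is legitimate since $|i-j|>1$ makes the setup symmetric) the same holds for $f\in R_i$ vanishing on $V_i\cap V_j$ acting on the left. Applying lemma \ref{lem:annulateur} twice yields
$$B_{i,j}\cong R_{i,j}\otimes_{R_i^{s_i}} R_{i,j}\otimes_{R_j^{s_j}} R_{i,j}.$$
Next, since $s_i$ and $s_j$ commute, the variety $V_i\cap V_j$ is stable under both $s_i$ and $s_j$, so restriction gives surjections $R_i^{s_i}\twoheadrightarrow R_{i,j}^{s_i}$ and $R_j^{s_j}\twoheadrightarrow R_{i,j}^{s_j}$; the tensor products over $R_i^{s_i}$ and $R_j^{s_j}$ therefore collapse to tensor products over $R_{i,j}^{s_i}$ and $R_{i,j}^{s_j}$, giving the intrinsic description
$$B_{i,j}\cong R_{i,j}\otimes_{R_{i,j}^{s_i}} R_{i,j}\otimes_{R_{i,j}^{s_j}} R_{i,j}.$$

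Once in this form, no irreducible component of $V_i\cap V_j$ lies in $H_{s_i}$ or $H_{s_j}$ (by definition of transversality), so the decompositions $R_{i,j}=R_{i,j}^{s_i}\oplus R_{i,j}^{s_i} f_i$ and $R_{i,j}=R_{i,j}^{s_j}\oplus R_{i,j}^{s_j} f_j$ both hold. Plugging each one into the middle tensor factor in turn exhibits
$$\{1\otimes 1\otimes 1,\ 1\otimes f_i\otimes 1,\ 1\otimes 1\otimes f_j,\ 1\otimes f_i\otimes f_j\}$$
as a free left $R_{i,j}$-basis of $B_{i,j}$, and a symmetric argument (inserting the decompositions in the leftmost and rightmost tensor factors instead) gives a free right $R_{i,j}$-basis. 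Hence $B_{i,j}$ is free of rank $4$ on each side. For the annihilator statement, freeness over $R_{i,j}$ makes $B_{i,j}$ faithful as an $R_{i,j}$-bimodule, so the left and right annihilators in $\bar{R}$ both coincide with the kernel of the canonical quotient $\bar{R}\twoheadrightarrow R_{i,j}$, which is precisely the ideal of functions vanishing on $V_i\cap V_j$.

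The main obstacle is the bookkeeping in the first paragraph: one must verify that each change of base ring in the tensor product is legitimate, i.e.\ that the quotient maps $R_i^{s_i}\twoheadrightarrow R_{i,j}^{s_i}$ are indeed surjective (which uses that the $s_i$-invariance of $V_i\cap V_j$ allows one to average over $s_i$), and that gradings are preserved throughout (using lemmas \ref{lem:stroppel}, \ref{lem:annulateur} and \ref{lem:tensorgraded}, and the fact that the ideal of $V_i\cap V_j$ is homogeneous). Once this is in place, the free-basis computation of step~3 is formal and the annihilator assertion is immediate from faithfulness.
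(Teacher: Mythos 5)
Your proof is correct and reaches the same conclusion, but by a cleaner route than the paper's own argument, so the comparison is worth recording.

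The paper works directly with $B_{i,j}=R_i\otimes_{R_i^{s_i}} R_{i,j}\otimes_{R_j^{s_j}} R_j$ and introduces a \emph{third} $R_{i,j}$-module structure on $B_{i,j}$, the one induced by the middle tensor factor via $B_{i,j}=N\otimes_{R_{i,j}}M$ with $N=R_i\otimes_{R_i^{s_i}}R_{i,j}$ and $M=R_{i,j}\otimes_{R_j^{s_j}}R_j$. For that middle structure $B_{i,j}$ is manifestly free of rank $4$ with basis $\{1\otimes 1\otimes 1, f_i\otimes 1\otimes 1, 1\otimes 1\otimes f_j, f_i\otimes 1\otimes f_j\}$ (note the $f_i$ in the \emph{first} slot), and the paper then compares coefficients of a putative dependence over the left $R_{i,j}$-structure against this middle basis, applying $s_i$ to split invariant and anti-invariant parts. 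You instead push the annihilator lemma \ref{lem:annulateur} to its natural conclusion: after replacing both outer $R_i$ and $R_j$ factors by $R_{i,j}$ (justified left and right by the $i\leftrightarrow j$ symmetry of the preceding lemma), and after observing that the surjections $R_i^{s_i}\twoheadrightarrow R_{i,j}^{s_i}$ and $R_j^{s_j}\twoheadrightarrow R_{i,j}^{s_j}$ collapse the base rings, you land on the intrinsic form $R_{i,j}\otimes_{R_{i,j}^{s_i}}R_{i,j}\otimes_{R_{i,j}^{s_j}}R_{i,j}$, from which rank $4$ freeness drops out of two applications of the eigenspace decomposition. Your argument avoids the basis-comparison gymnastics and the auxiliary module structure entirely, and makes the annihilator computation a trivial consequence of faithfulness; the price is the small verification of the surjectivity $R_i^{s_i}\twoheadrightarrow R_{i,j}^{s_i}$ (which you correctly flag and which follows from averaging over $s_i$, using that $V_i\cap V_j$ is $s_i$-stable because $s_i$ and $s_j$ commute). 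One pedantic point worth spelling out in a final write-up: when reading off the rank $4$ basis $\{1\otimes 1\otimes 1,\,1\otimes f_i\otimes 1,\,1\otimes 1\otimes f_j,\,1\otimes f_i\otimes f_j\}$ as a \emph{left} $R_{i,j}$-module, it is cleanest to parenthesize as $R_{i,j}\otimes_{R_{i,j}^{s_i}}\bigl(R_{i,j}\otimes_{R_{i,j}^{s_j}}R_{i,j}\bigr)$, decompose the third factor over $R_{i,j}^{s_j}$ first and then the combined second factor over $R_{i,j}^{s_i}$; decomposing the \emph{middle} factor as a left $R_{i,j}^{s_i}$-module directly is not compatible with its right $R_{i,j}^{s_j}$-structure, so the order matters. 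Modulo that bookkeeping remark the proposal is sound.
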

\begin{proof}
As a left $R_i$-module, $B_{i,j}$ is generated by $t_1:=1\otimes 1\otimes 1$, $t_2:=1\otimes 1\otimes f_j$, $t_3:=1\otimes f_i\otimes f_j$ and $t_4:=1\otimes f_i\otimes 1$. Lets show that it is a basis of $B_{i,j}$ over $R_{i,j}$. Consider elements $a_k\in R_i$, $k=1, 2, 3, 4$ and write them as $a_k=r_k+r'_k f_i$ with $r_k, r'_k\in R_i^{s_i}$, $k=1,\dots,4$, and suppose $\sum_{i=1}^4 a_k\cdot t_k=0$. One gets
\begin{eqnarray*}
& &1\otimes (r_2+r_3 f_i)|_{V_i\cap V_j}\otimes f_j+1\otimes (r_1+r_4 f_i)|_{V_i\cap V_j}\otimes 1\\
& &+f_i\otimes (r_2'+r_3' f_i)|_{V_i\cap V_j}\otimes f_j+f_i\otimes(r_1'+r_4' f_i)|_{V_i\cap V_j}\otimes 1\\
& &=0.
\end{eqnarray*}

Now since $N:=R_i\otimes_{R_i^{s_i}} R_{i,j}$ is free as a right $R_{i,j}$-module and $M:=R_{i,j}\otimes_{R_j^{s_j}} R_j$ is free as a left $R_{i,j}$-module, $B_{i,j}=N\otimes_{R_{i,j}} M$ is free for the induced structure of $R_{i,j}$-module (which is not the same than its left or right $R_{i,j}$-module structure !), and a basis is given by $1\otimes 1\otimes 1$, $f_i\otimes 1\otimes 1$, $1\otimes 1\otimes f_j$ and $f_i\otimes 1\otimes f_j$. This implies that 
$$0=(r_1+r_3 f_i)|_{V_i\cap V_j}=(r_2+r_4 f_i)|_{V_i\cap V_j}=(r_1'+r_3' f_i)|_{V_i\cap V_j}=(r_2'+r_4' f_i)|_{V_i\cap V_j}.$$
Now the same argument as in the proof of the preceding lemma (applying $s_i$ this time) gives that $r_k|_{V_i\cap V_j}=0=r'_k|_{V_i\cap V_j}$, hence that $a_k|_{V_i\cap V_j}=0$ for all $k$, which concludes.
\end{proof}

\subsection{A product of bimodules}
Given two bimodules $B, B'\in\bar{R}-\mathrm{mod}_{\mathbb{Z}}-\bar{R}$, one defines a bimodule $B\ast B'$ in the following way : let $I_B^r$ be the right annihilator of $B$ and $I_{B'}^\ell$ the left annihilator of $B'$, and write $V_B^r$, $V_{B'}^{\ell}$ for the corresponding closed subvarieties of $Z$. Then set
$$B\ast B':=B\otimes_{\bar{R}} \mathcal{O}(V_B^r\cap V_{B'}^{\ell})\otimes_{\bar{R}} B'.$$
\noindent We will often omit the exponents $\ell$ and $r$ when no confusion is possible. Thanks to remark \ref{rmq:grad}, such a bimodule lies in $\bar{R}-\mathrm{mod}_{\mathbb{Z}}-\bar{R}$ in case all the varieties occuring in its definition are union of Weyl lines. Note that if $B, B'$ have trivial right, respectively left annihilators (for example if they are free as right, resp. left $\bar{R}$-modules), this product is nothing but a tensor product over $\bar{R}$.
\begin{rmq}
In all the cases we will consider further, we will always have $I_B^r=I(V_B^r)$ and $I_{B'}^\ell=I(V_{B'}^{\ell})$. We will therefore often write the $\ast$-product as 
$$B\otimes_{\mathcal{O}(V_B^r)} \mathcal{O}(V_B^r\cap V_{B'}^{\ell})\otimes_{\mathcal{O}(V_{B'}^{\ell})} B'.$$
\end{rmq}
\noindent Recall the bimodules $B_i:=R_i\otimes_{R_i^{s_i}} R_i$ with $i\in\{1,\dots,n\}$ from lemma \ref{lem:bi}.
\begin{lem}\label{lem:annulateurlibre}
Let $M$ be a right $\bar{R}$-module which is free over $\mathcal{O}(V_M)$ for $V_M\in\mathcal{V}_n$. The right annihilator of $M\ast B_i$ is the ideal of the variety 
$$V_i\cap (V_M\cup s_i V_M).$$
Moreover $M\ast B_i$ is free as a right $\mathcal{O}(V_i\cap (V_M\cup s_i V_M))$-module. The same statement holds for the left operation on a bimodule $B_i\ast M$ in case $M$ is a left $\bar{R}$-module which is free over $\mathcal{O}(V_M)$.
\end{lem}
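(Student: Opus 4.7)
The plan is first to simplify $M\ast B_i$ using the freeness of $M$. Since $M$ is free as a right $\mathcal{O}(V_M)$-module, its right $\bar{R}$-action factors through $\mathcal{O}(V_M)$, hence
$$M\otimes_{\bar{R}}\mathcal{O}(V_M\cap V_i)\cong M\otimes_{\mathcal{O}(V_M)}\mathcal{O}(V_M\cap V_i)=:N,$$
which is free of the same rank as a right $\mathcal{O}(U)$-module for $U:=V_M\cap V_i$. Since the left $\bar{R}$-action on $B_i=R_i\otimes_{R_i^{s_i}}R_i$ factors through $R_i$, this gives
$$M\ast B_i\cong N\otimes_{R_i^{s_i}}R_i\cong N\oplus Nf_i$$
as left modules, using the $R_i^{s_i}$-bimodule decomposition $R_i=R_i^{s_i}\oplus R_i^{s_i}f_i$.

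To determine the right annihilator, I would write $r\in R_i$ uniquely as $a+bf_i$ with $a,b\in R_i^{s_i}$ and compute $(n\otimes 1+n'\otimes f_i)\cdot r$ explicitly, using $f_i^2\in R_i^{s_i}$ together with the fact that $a,b$ commute with $f_i$ on $N$. The condition that $r$ annihilate all such elements reduces, by freeness of $N$ over $\mathcal{O}(U)$, to $a|_U=b|_U=0$. By $s_i$-invariance of $a,b$ this is the same as $a,b$ vanishing on $U\cup s_iU=V_i\cap(V_M\cup s_iV_M)=:W$. Conversely, any $r\in R_i$ vanishing on $W$ decomposes this way: evaluating $r(v)=0$ and $r(s_iv)=0$ for $v\in U$ (using $s_if_i=-f_i$ and the fact that $f_i$ is not a zero divisor on $U$, since Weyl lines in $U\subset V_i$ are transverse to $H_{s_i}$), one gets $a(v)=b(v)=0$, so the $R_i$-annihilator is exactly $I_{R_i}(W)$, and pulling back gives the ideal of $W$ in $\bar{R}$.

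For the freeness, I would give a unified treatment via the intermediate ring $U':=\alpha(R_i^{s_i})\subseteq\mathcal{O}(W)$, where $\alpha:R_i^{s_i}\hookrightarrow R_i\twoheadrightarrow\mathcal{O}(W)$. Corollary \ref{cor:ni} distinguishes two cases: if $V_M$ is $s_i$-invariant, then $W=U$ is $s_i$-invariant and $U'=\mathcal{O}(W)^{s_i}$; otherwise $U$ is not $s_i$-invariant and remark \ref{rmq:noninvariant} provides a surjection $R_i^{s_i}\twoheadrightarrow\mathcal{O}(U)$, so $U'\cong\mathcal{O}(U)$. In both cases one shows $\mathcal{O}(W)=U'\oplus U'\cdot f_i|_W$ is free of rank two over $U'$, the directness of the sum following from evaluation at $s_i$-related points in $U$ together with $f_i$ being a non-zero-divisor there. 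Since the right $R_i^{s_i}$-action on $N$ factors through $U'$, one obtains
$$M\ast B_i\cong N\otimes_{U'}\bigl(U'\otimes_{R_i^{s_i}}R_i\bigr)\cong N\otimes_{U'}\mathcal{O}(W),$$
which is free as right $\mathcal{O}(W)$-module since $N$ is free over $U'$ (of rank $r$ in the non-invariant case, of rank $2r$ in the invariant case).

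The principal obstacle is the non-$s_i$-invariant case: verifying $\mathcal{O}(W)=\mathcal{O}(U)\oplus\mathcal{O}(U)f_i|_W$ and $\mathcal{O}(U)\otimes_{R_i^{s_i}}R_i\cong\mathcal{O}(W)$ depends essentially on the nontrivial lifting statement from remark \ref{rmq:noninvariant} that $s_i$-invariant regular functions already surject onto $\mathcal{O}(U)$, a fact that is specific to the geometry of Weyl lines in type $A$. The analogous claim for the left $\bar{R}$-module structure of $B_i\ast M$ follows from completely symmetric arguments.
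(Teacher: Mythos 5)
Your proof is correct and follows essentially the same lines as the paper's: both determine the annihilator by decomposing $R_i = R_i^{s_i}\oplus R_i^{s_i}f_i$, using freeness of $N = M\otimes_{\mathcal{O}(V_M)}\mathcal{O}(V_M\cap V_i)$ over $\mathcal{O}(U)$ to extract the conditions $a|_U=b|_U=0$, and then invoking $s_i$-invariance of $a,b$ to pass from $U$ to $W = U\cup s_iU$; the converse inclusion rests on evaluating at the $s_i$-conjugate pair of points and the nondegeneracy of $f_i$ on $W\subset V_i$, exactly as in the paper. Where you differ is in the freeness part: the paper splits into two cases ($V_M\cap V_i$ $s_i$-invariant or not) and in each case exhibits explicit generators ($1\otimes 1, f_i\otimes 1$ in the first, $1\otimes 1$ alone in the second, via remark \ref{rmq:noninvariant}) and verifies linear independence by a direct computation; you instead introduce the intermediate ring $U'=\alpha(R_i^{s_i})\subseteq\mathcal{O}(W)$, prove the identification $U'\otimes_{R_i^{s_i}}R_i\cong\mathcal{O}(W)$, and deduce $M\ast B_i\cong N\otimes_{U'}\mathcal{O}(W)$ uniformly, reading off freeness (with the correct rank $r$ versus $2r$) from freeness of $N$ over $U'$. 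This is a mild but genuine reorganization: it packages the two cases of corollary \ref{cor:ni} into a single base-change statement, at the price of checking $U'=\mathcal{O}(W)^{s_i}$ and (in the non-invariant case) $U'\cong\mathcal{O}(U)$, both of which do indeed hinge on remark \ref{rmq:noninvariant} exactly as you flag. Both proofs are correct; yours makes the right $\mathcal{O}(W)$-module structure more transparent, while the paper's is more hands-on.
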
 
\begin{proof}
Let $f\in\bar{R}$ and annihilate $M\ast B_i$ on the right. One can suppose $f\in R_i$. Write $f=r+r' f_i$ with $r, r'\in R_i^{s_i}$. We can suppose $M\ast B_i\cong \mathcal{O}(V_M\cap V_i)\otimes_{R_i^{s_i}} R_i$ (as a right $\bar{R}$-module) since $M$ is free as a right $\mathcal{O}(V_M)$-module.

Now if $f=r+r'f_i$, $r, r'\in R_i^{s_i}$ annihilates $M\ast B_i$, in particular it annihilates $1\otimes 1$. Hence one has
$$r|_{V_M\cap V_i}\otimes 1+r'|_{V_M\cap V_i}\otimes f_i=0.$$
This forces $r|_{V_M\cap V_i}=0=r'|_{V_M\cap V_i}$ (because $\mathcal{O}(V_M\cap V_i)\otimes_{R_i^{s_i}} R_i$ is free as a module over $\mathcal{O}(V_M\cap V_i)$ for the obvious operation). Since $r, r'$ are $s_i$-invariant, this forces them to be zero on $V_i\cap (V_M\cup s_i V_M)$, and the same holds for $f$. Conversely if $f\in R_i$ is zero on $V_i\cap(V_M\cup s_i V_M)$, then write $f=r+r'f_i$ with $r, r'$ invariant under $s_i$. This forces $r, r'$ to be zero on $V_i\cap(V_M\cup s_i V_M)$.

Now show it is free ; first suppose $V_M\cap V_i$ is $s_i$-invariant ; hence $\mathcal{O}(V_M\cap V_i)= \mathcal{O}(V_M\cap V_i)^{s_i}\oplus\mathcal{O}(V_M\cap V_i)^{s_i} f_i$. It follows that $\mathcal{O}(V_M\cap V_i)\otimes_{R_i^{s_i}} R_i$ is generated as a right $\bar{R}$-module by $1\otimes 1$ and $f_i\otimes 1$. Let $r, r'\in R_i$ be such that
$$1\otimes r+f_i\otimes r'=0.$$
Write $r=r_1+r_2 f_i$ and $r'=r'_1+r'_2 f_i$ with $r_j, r'_j\in R_i^{s_i}$ and get
$$(r_1+r'_1 f_i)|_{V_M\cap V_i}\otimes 1+(r_2+r'_2 f_i)|_{V_M\cap V_i}\otimes f_i=0.$$
This implies that $(r_1+r'_1 f_i)|_{V_M\cap V_i}=0=(r_2+r'_2 f_i)|_{V_M\cap V_i}$ and by invariance one gets $r'_j|_{V_M\cap V_i}=0=r_j|_{V_M\cap V_i}$ for $j=1, 2$. Hence $M\ast B_i$ is free on the right over $\mathcal{O}(V_M\cap V_i)$, of rank $2$.

Now suppose $V_i\cap V_M$ is not $s_i$-invariant. Remark \ref{rmq:noninvariant} implies that $R_i^{s_i}\twoheadrightarrow\mathcal{O}(V_M\cap V_i)$. Hence as a right $\mathcal{O}(V_i\cap(V_M\cup s_i V_M))$-module, $\mathcal{O}(V_M\cap V_i)\otimes_{R_i^{s_i}} R_i$ is generated by $1\otimes 1$. We have to show that if $f\in R_i$, $1\otimes f=0$ implies that $f|_{V_i\cap(V_M\cup s_i V_M)}=0$. Write $f=r+r'f_i$ with $r, r'\in R_i^{s_i}$. This implies that $r'|_{V_M\cap V_i}=0=r|_{V_M\cap V_i}$. Now since $r', r$ are $s_i$-invariant one concludes that they also vanish on $V_i\cap(V_M\cup s_i V_M)$ and the same holds for $f$. 
\end{proof}
In particular, if a module $M$ has as right annihilator $I(W)$ with $W\in\mathcal{V}_n$, then $M\ast B_i$ has as right annihilator $I(s_i\cdot W)$ and by definition $s_i\cdot W\in\mathcal{V}_n$. The above lemma will allow us to use induction.  

\subsection{Associativity}
Unfortunately, the product defined in the previous section is not associative for arbitrary bimodules $B, B'$. However, as we will see in this section, it will be associative when restricted to a suitable family of bimodules, exactly the bimodules occuring by considering successive $\ast$-products of the bimodules $B_i$, $i\in\{1,\dots,n\}$.
 
A first step in proving the associativity of the product $\ast$ is to prove the following : If $M, N\in\bar{R}-\mathrm{mod}_{\mathbb{Z}}-\bar{R}$ with $M$ having $I(V_M)$ as right annihilator and $N$ having $I(V_N)$ as left annihilator, then 
\begin{eqnarray}\label{eq:associativity}
(M\ast B_i)\ast N\cong M\ast (B_i\ast N)
\end{eqnarray}
provided $V_N, V_M$ lie in a certain family of subvarieties of $Z$ ; thanks to lemma \ref{lem:annulateurlibre} the good family to choose is $\mathcal{V}_n$. The idea will be then to show associativity of the $\ast$ product for products of three of the bimodules $B_i$ and then use this previous result to generalise to arbitrary products of the $B_i$. 

Let's rewrite equation \ref{eq:associativity}. We suppose that $M$ is free on the right over $\mathcal{O}(V_M)$ and that $N$ is free on the left over $\mathcal{O}(V_N)$. Set $W_{i,M}:=V_i\cap(V_M\cup s_i V_M)$, $W_{i,N}:=V_i\cap(V_N\cup s_i V_N)$. By definition of the $\ast$ product together with lemma \ref{lem:annulateurlibre} the left hand side of \ref{eq:associativity} can be rewritten as
$$(M\otimes_{\mathcal{O}(V_M)} \mathcal{O}(V_M\cap V_i)\otimes_{R_i}(R_i\otimes_{R_i^{s_i}} R_i))\otimes_{\mathcal{O}(W_{i,M})} \mathcal{O}(W_{i,M}\cap V_N)\otimes_{\mathcal{O}(V_N)} N,$$ or shorter 
$$(M\otimes_{\mathcal{O}(V_M)} \mathcal{O}(V_M\cap V_i)\otimes_{R_i^{s_i}} R_i)\otimes_{\mathcal{O}(W_{i,M})} \mathcal{O}(W_{i,M}\cap V_N)\otimes_{\mathcal{O}(V_N)} N.$$ Now using lemmas \ref{lem:annulateurlibre} and \ref{lem:annulateur} we can rewrite this as
$$(M\otimes_{\mathcal{O}(V_M)} \mathcal{O}(V_M\cap V_i)\otimes_{R_i^{s_i}} \mathcal{O}(W_{i,M}))\otimes_{\mathcal{O}(W_{i,M})} \mathcal{O}(W_{i,M}\cap V_N)\otimes_{\mathcal{O}(V_N)} N.$$ or shorter
$$M\otimes_{\mathcal{O}(V_M)} \mathcal{O}(V_M\cap V_i)\otimes_{R_i^{s_i}} \mathcal{O}(W_{i,M}\cap V_N)\otimes_{\mathcal{O}(V_N)} N.$$ Doing the same reductions for the right hand side one gets
$$M\otimes_{\mathcal{O}(V_M)} \mathcal{O}(V_M\cap W_{i,N})\otimes_{R_i^{s_i}} \mathcal{O}(V_i\cap V_N)\otimes_{\mathcal{O}(V_N)} N.$$
\noindent Now our job is to show that these two bimodules are isomorphic in $\bar{R}-\mathrm{mod}_{\mathbb{Z}}-\bar{R}$. It is therefore enough to show that 
$$\mathcal{O}(V_M\cap V_i)\otimes_{R_i^{s_i}} \mathcal{O}(W_{i,M}\cap V_N)\cong\mathcal{O}(V_M\cap W_{i,N})\otimes_{R_i^{s_i}} \mathcal{O}(V_i\cap V_N),$$
where the isomorphism holds in $\mathcal{O}(V_M)-\mathrm{mod}_{\mathbb{Z}}-\mathcal{O}(V_N)$.
\begin{prop}\label{prop:assob?b?}
One has
$$\mathcal{O}(V_M\cap V_i)\otimes_{R_i^{s_i}} \mathcal{O}(W_{i,M}\cap V_N)\cong\mathcal{O}(V_M\cap W_{i,N})\otimes_{R_i^{s_i}} \mathcal{O}(V_i\cap V_N),$$
as graded $(\mathcal{O}(V_M),\mathcal{O}(V_N))$-bimodules. 
\end{prop}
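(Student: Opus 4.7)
The plan is to proceed by case analysis on the $s_i$-invariance of the varieties $V_M\cap V_i$ and $V_N\cap V_i$, a condition completely characterized by Corollary~\ref{cor:ni}. Three structural tools from Section~2.1 will be used repeatedly: the $R_i^{s_i}$-module decomposition $R_i=R_i^{s_i}\oplus R_i^{s_i}f_i$, the analogous decomposition $\mathcal{O}(X)=\mathcal{O}(X)^{s_i}\oplus\mathcal{O}(X)^{s_i}f_i$ for any $s_i$-invariant $X\subset V_i$, and the surjection $R_i^{s_i}\twoheadrightarrow\mathcal{O}(V\cap V_i)$ from Remark~\ref{rmq:noninvariant} whenever $V\cap V_i$ fails to be $s_i$-invariant. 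Lemma~\ref{lem:annulateur} will simplify tensor factors that are surjected onto by a common ring.

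First note the elementary observation that $W_{i,M}=V_M\cap V_i$ exactly when $V_M\cap V_i$ is $s_i$-invariant, and likewise for $W_{i,N}$; in the non-invariant case, $W_{i,M}=(V_M\cap V_i)\cup s_i(V_M\cap V_i)$ is the $s_i$-orbit closure of $V_M\cap V_i$ in $V_i$. In the fully symmetric case where both $V_M\cap V_i$ and $V_N\cap V_i$ are $s_i$-invariant, both $W_{i,M}\cap V_N$ and $V_M\cap W_{i,N}$ collapse to the common variety $V_M\cap V_N\cap V_i$, and the desired isomorphism takes the form
$$\mathcal{O}(V_M\cap V_i)\otimes_{R_i^{s_i}}\mathcal{O}(V_M\cap V_N\cap V_i)\cong \mathcal{O}(V_M\cap V_N\cap V_i)\otimes_{R_i^{s_i}}\mathcal{O}(V_N\cap V_i).$$
I would exhibit an explicit bimodule map using the $\{1,f_i\}$-basis on both factors of each side. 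In the mixed case where exactly one of $V_M\cap V_i$, $V_N\cap V_i$ is not $s_i$-invariant, the corresponding tensor factor becomes a quotient of $R_i^{s_i}$, so Lemma~\ref{lem:annulateur} simplifies the expression on that side, and the other side is handled via the free $\{1,f_i\}$-decomposition; both are then recognized as a common quotient of $\mathcal{O}(V_M\cap V_i)\otimes_{R_i^{s_i}}\mathcal{O}(V_N\cap V_i)$. In the case where both $V_M\cap V_i$ and $V_N\cap V_i$ fail to be $s_i$-invariant, both outer factors are surjected onto by $R_i^{s_i}$, and both sides reduce easily to coordinate rings of intersections.

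The main technical obstacle is the symmetric case, where both sides are built from the same inner variety $V_M\cap V_N\cap V_i$ but placed asymmetrically in the tensor product. The key calculation there is to check that the classes of $1\otimes f_i$ and $f_i\otimes 1$ transport correctly under the candidate map, using that $(f_i\otimes 1)\cdot a=a\cdot (1\otimes f_i)$ for $a\in R_i^{s_i}$ to move $f_i$ across the tensor. Throughout, the $(\mathcal{O}(V_M),\mathcal{O}(V_N))$-bimodule structure, inherited from the surjections $\mathcal{O}(V_M)\twoheadrightarrow\mathcal{O}(V_M\cap V_i)$ and $\mathcal{O}(V_N)\twoheadrightarrow\mathcal{O}(V_N\cap V_i)$ on the left and the analogous surjections via $V_M\cap W_{i,N}$ and $V_i\cap V_N$ on the right, must be tracked carefully to ensure the isomorphism is compatible with both actions.
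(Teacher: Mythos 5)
Your case analysis on $s_i$-invariance and the toolkit you identify (the decomposition $R_i=R_i^{s_i}\oplus R_i^{s_i}f_i$, the eigenspace decomposition of $\mathcal{O}(X)$ for $s_i$-invariant $X$, the surjection $R_i^{s_i}\twoheadrightarrow\mathcal{O}(V_M\cap V_i)$ from remark \ref{rmq:noninvariant} in the non-invariant case, and lemma \ref{lem:annulateur}) are all the right ingredients and match the paper's. However, the proposal misses the single unifying observation that makes the argument go through uniformly: in \emph{every} case one shows, via lemma \ref{lem:annulateur}, that \emph{both} sides are isomorphic to the common intermediate bimodule
$$\mathcal{O}(V_M\cap W_{i,N})\otimes_{R_i^{s_i}}\mathcal{O}(V_N\cap W_{i,M}),$$
and the case distinction only changes \emph{how} one verifies the relevant annihilator condition (invariant lift $r,r'\in R_i^{s_i}$ and vanish on an $s_i$-invariant set in the invariant case; invariant lift via remark \ref{rmq:noninvariant} that then must vanish on $X\cup s_iX$ in the non-invariant case, together with the inclusion $V_N\cap W_{i,M}\subset X\cup s_iX$).

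The concrete gap is in your handling of the "fully symmetric" case. You are correct that both $W_{i,M}\cap V_N$ and $V_M\cap W_{i,N}$ collapse to $V_M\cap V_N\cap V_i$, but you then propose to exhibit an explicit bimodule map between $\mathcal{O}(V_M\cap V_i)\otimes_{R_i^{s_i}}\mathcal{O}(X)$ and $\mathcal{O}(X)\otimes_{R_i^{s_i}}\mathcal{O}(V_N\cap V_i)$ rather than invoking lemma \ref{lem:annulateur} again. No such map is given, and the identity $(f_i\otimes 1)\cdot a=a\cdot(1\otimes f_i)$ for $a\in R_i^{s_i}$ you mention does not help: both sides just move $a$ across $\otimes_{R_i^{s_i}}$, and $1\otimes f_i\neq f_i\otimes 1$ over $R_i^{s_i}$ since $f_i\notin R_i^{s_i}$. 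The clean route, exactly as in the non-symmetric cases, is to observe that any $g\in\mathcal{O}(V_M\cap V_i)$ vanishing on $X$ lifts to $r+r'f_i$ with $r,r'\in R_i^{s_i}$ each vanishing on $X$ (by the eigenspace decomposition of $\mathcal{O}(X)$, valid because $X$ is $s_i$-invariant), hence $g$ annihilates the left factor; lemma \ref{lem:annulateur} then shrinks it to $\mathcal{O}(X)$, and symmetrically for the right factor. Without this, your plan for the symmetric case is not executable as written. The mixed and doubly-non-invariant cases are also left too vague ("recognized as a common quotient," "reduce easily"); the paper's reduction to the common intermediate makes those claims precise.
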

\begin{proof}
The strategy is to find the left and right annihilators and then use lemma \ref{lem:annulateur}. We first suppose $V_N\cap V_i$ is $s_i$-invariant. Hence $W_{i,M}\cap V_N$ is $s_i$-invariant. Let $g\in\mathcal{O}(V_N\cap V_i)$ be such that $g|_{V_N\cap W_{i,M}}=0$. Chose $h\in R_i$, $h=r+r'f_i$ with $r, r'\in R_i^{s_i}$ such that $h|_{V_N\cap V_i}=g$. Since $V_N\cap W_{i,M}$ is $s_i$-invariant one has that $r'|_{V_N\cap W_{i,M}}=0=r|_{V_N\cap W_{i,M}}$, hence also $r'|_{V_M\cap W_{i,M}}=0=r|_{V_M\cap W_{i,M}}$ since in our case $V_M\cap W_{i,N}\hookrightarrow V_N\cap W_{i,M}$ (because of $s_i$-invariance of $V_N\cap V_i$). We have shown that an element $g\in\mathcal{O}(V_N\cap V_i)$ which vanishes on $V_N\cap W_{i,M}$ kills $\mathcal{O}(V_M\cap W_{i,N})\otimes_{R_i^{s_i}} \mathcal{O}(V_i\cap V_N)$ on the right, hence by lemma \ref{lem:annulateur}, the right hand side is isomorphic to 
$$\mathcal{O}(V_M\cap W_{i,N})\otimes_{R_i^{s_i}} \mathcal{O}(V_N\cap W_{i,M}).$$
Now if $V_M\cap V_i$ is $s_i$-invariant one uses the same argument for the left hand side for the left operation and this left hand side is isomorphic to
$$\mathcal{O}(V_M\cap W_{i,N})\otimes_{R_i^{s_i}} \mathcal{O}(V_N\cap W_{i,M}),$$
\noindent which concludes. 

Now suppose $V_M\cap V_i$ is not $s_i$-invariant. Consider $g\in \mathcal{O}(V_M\cap V_i)$ vanishing on $X:=V_M\cap W_{i,N}$. By assumption $V_M$ lies in $\mathcal{V}_n$ and thanks to remark \ref{rmq:noninvariant}, one can choose $h\in R_i^{s_i}$ such that $h|_{V_M\cap V_i}=g$. In particular $h|_X=0$. Now since $h$ is $s_i$-invariant it has to vanish on $X\cup s_i X$. But $V_N\cap W_{i,M}\hookrightarrow X\cup s_i X$. Hence $h$, whence $g$ kills $\mathcal{O}(V_M\cap V_i)\otimes_{R_i^{s_i}} \mathcal{O}(W_{i,M}\cap V_N)$ on the left, and this bimodule is hence isomorphic to 
$$\mathcal{O}(V_M\cap W_{i,N})\otimes_{R_i^{s_i}} \mathcal{O}(V_N\cap W_{i,M})$$
\noindent thanks to lemma \ref{lem:annulateur}. The case where $V_N\cap V_i$ is not $s_i$-invariant but $V_M\cap V_i$ is is symmetric ; in case none of them is $s_i$-invariant, the argument given above (choose a preimage $h$ which is invariant and then restrict) can still be given, for the left as well as for the right operation, since it makes no use of the fact that the variety on the other side is $s_i$-invariant or not.  
\end{proof}
We define bimodules associated to finite sequences of integers in $[1,n]$. If the sequence has length $1$, containing a single index $j$, the corresponding bimodule is $B_j$. Let $i_1,\dots, i_k\in[1,n]$. Define the bimodule associated to this sequence by setting $B_{i_k\cdots i_1}=B_{i_k}\ast B_{i_{k-1}\cdots i_1}$. A bimodule $B$ will be said to be \textbf{associated} to such a sequence if it is obtained from $B_{i_k},\dots, B_{i_1}$ by doing a product in this order but with a possibly different choice of brackets from the one we made for $B_{i_k\cdots i_1}$. For example, $(B_{i_4}\ast B_{i_3})\ast (B_{i_2}\ast B_{i_1})$ and $B_{i_4}\ast ((B_{i_3}\ast B_{i_2})\ast B_{i_1})$ are associated to the same sequence  $i_4\cdots i_1$. 
\begin{thm}\label{thm:asso}
Let $i_k\cdots i_1$ be a sequence of indices in $\{1,\dots,n\}$. 
\begin{enumerate}
	\item Two bimodules associated to this sequence are isomorphic in $\bar{R}-\mathrm{mod}_{\mathbb{Z}}-\bar{R}$.
	\item The bimodule $B_{i_k\cdots i_1}$ is free on the left on $\mathcal{O}(W_{i_k\cdots i_1})$ and free on the right on $\mathcal{O}(W_{i_1\cdots i_k})$
\end{enumerate}

\end{thm}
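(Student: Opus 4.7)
The plan is to prove parts (1) and (2) simultaneously by induction on $k$, the base case $k=1$ being Lemma \ref{lem:bi}. Throughout, I write $B_{i_k\cdots i_1}$ for the canonical right-leaning bracketing $B_{i_k}\ast B_{i_{k-1}\cdots i_1}$.

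For the canonical bimodule, left-freeness in (2) is immediate from the left-sided version of Lemma \ref{lem:annulateurlibre}: since by the induction hypothesis $B_{i_{k-1}\cdots i_1}$ is free on the left over $\mathcal{O}(W_{i_{k-1}\cdots i_1})$ with $W_{i_{k-1}\cdots i_1}\in\mathcal{V}_n$, I conclude that $B_{i_k}\ast B_{i_{k-1}\cdots i_1}$ is free on the left over $\mathcal{O}(V_{i_k}\cap(W_{i_{k-1}\cdots i_1}\cup s_{i_k}W_{i_{k-1}\cdots i_1}))=\mathcal{O}(W_{i_k\cdots i_1})$ by definition of the latter variety. For right-freeness the canonical bracketing is awkward, since $B_{i_1}$ is buried inside, so I will first rebracket as $B_{i_k\cdots i_1}\cong B_{i_k\cdots i_2}\ast B_{i_1}$ and then apply Lemma \ref{lem:annulateurlibre} with $M=B_{i_k\cdots i_2}$: by the induction hypothesis this bimodule is free on the right over $\mathcal{O}(W_{i_2\cdots i_k})\in\mathcal{V}_n$, whence the product is free on the right over $\mathcal{O}(V_{i_1}\cap(W_{i_2\cdots i_k}\cup s_{i_1}W_{i_2\cdots i_k}))=\mathcal{O}(W_{i_1\cdots i_k})$.

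The rebracketing step is precisely the content of part (1) and is the main technical point. I will establish a general associativity statement by a secondary induction on the length $r$ of the middle factor: whenever $X,Y,Z$ are bimodules associated to subsequences of $i_k\cdots i_1$ with $Y$ of length $r\geq 1$, one has $(X\ast Y)\ast Z\cong X\ast(Y\ast Z)$. The base case $r=1$ is equation \ref{eq:associativity} coming from Proposition \ref{prop:assob?b?}. For $r>1$, use part (1) of the outer induction hypothesis to rewrite $Y$ as $B_j\ast Y'$ for the appropriate leading index, apply \ref{eq:associativity} to slide $B_j$ across the outer bracket, and invoke the secondary inductive hypothesis on the shorter middle factor $Y'$. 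Part (1) for sequences of length $k$ then follows, since any two bracketings of the full product are connected by a finite sequence of such elementary associativity moves.

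The main obstacle is the interleaving of the inductions: parts (1) and (2) have to be proved together, since part (1) provides the associativity needed to rebracket subproducts in (2), while part (2) supplies the freeness data together with the fact that the relevant varieties lie in $\mathcal{V}_n$, which are precisely the hypotheses required to apply Proposition \ref{prop:assob?b?} at each step in the proof of part (1). The bookkeeping — checking that each intermediate bimodule appearing in an associativity move meets the hypotheses of Lemmas \ref{lem:annulateurlibre} and Proposition \ref{prop:assob?b?} — is where the care is required, but it goes through because all such intermediate bimodules correspond to strictly shorter sequences covered by the induction hypothesis.
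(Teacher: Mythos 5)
Your proof is correct and follows essentially the same route as the paper: a simultaneous induction on the length of the sequence, with part (2) supplying the freeness and $\mathcal{V}_n$-membership needed to invoke Proposition \ref{prop:assob?b?}, and part (1) reduced to sliding a single $B_j$ across the top-level bracket. The only difference is presentational: where the paper says ``apply successively Proposition \ref{prop:assob?b?} to move $B_j$'s from one bracket to the other,'' you package this as a secondary induction on the length of the middle factor $Y$, which spells out the same moves a little more explicitly, and you start the outer induction at $k=1$ rather than at $k=3$ as the paper does.
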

\begin{proof}
Both properties are proved simultaneously by using induction on the number of elementary bimodules $B_i$ occuring in a product. If our bimodule is a product of three of the $B_i$, say $(B_i\ast B_j)\ast B_k$, then associativity is immediate by proposition \ref{prop:assob?b?} and the arguments above it : one has
$$(B_i\ast B_j)\ast B_k\cong B_i\ast(B_j\ast B_k),$$
and both of theses bimodules are free as left $\mathcal{O}(W_{ijk})$-modules and as right $\mathcal{O}(W_{kji})$-modules thanks to corollary \ref{cor:nachbar}, proposition \ref{prop:distant} and lemma \ref{lem:annulateurlibre}. 
Now suppose the result holds for any product of at most $m-1$ of the $B_i$'s. Consider a sequence $i_1,\dots,i_{m}\in\{1,\dots,n\}$. By induction it is enough to show that 
$$(B_{i_1}\ast\cdots\ast B_{i_j})\ast(B_{i_{j+1}}\ast\cdots B_{i_m})\cong (B_{i_1}\ast\cdots\ast B_{i_k})\ast(B_{i_{k+1}}\ast\cdots B_{i_m}),$$
with $k\neq j$, where by induction the products $B_{i_1}\ast\cdots\ast B_{i_j}$, $B_{i_{j+1}}\ast\cdots B_{i_m}$, $B_{i_1}\ast\cdots\ast B_{i_k}$ and $B_{i_{k+1}}\ast\cdots B_{i_m}$ are well defined up to isomorphism (they can be written without brackets) and free over the varieties associated to their sequences (on the left over $\mathcal{O}(W_{i_1\cdots i_j})$ and on the right over $\mathcal{O}(W_{i_j\cdots i_1})$ for the first one, ...). One just has to apply successively proposition \ref{prop:assob?b?} to move $B_j$'s from one bracket to the other one. In particular both our bimodules are isomorphic to
$$B_{i_1}\ast(B_{i_2}\ast\cdots\ast B_{i_k})~\text{and}~(B_{i_1}\ast\cdots\ast B_{i_{k-1}})\ast B_{i_k},$$
which are free by induction together with lemma \ref{lem:annulateurlibre}. In particular this lemma tells us that the left annihilator is $I(W_{i_1\cdots i_k})$ and the right one is $I(W_{i_k\cdots i_1})$. 

\end{proof}
%\begin{defn}
%We denote by $\bar{\mathcal{R}}$ the category of finitely generated graded $(\bar{R},\bar{R})$-bimodules with same operation of $k$ on the right as on the left, and by $\mathcal{TL}$ the full subcategory of $\bar\mathcal{R}}$ consisting of modules which are isomorphic to direct sums of (direct summands) of bimodules associated to finite sequences as above and call
%\end{defn}
\section{Realization of the Temperley-Lieb algebra}

\subsection{The Temperley-Lieb algebra}
Let $\tau$ be a formal parameter. The Temperley-Lieb algebra $\mathrm{TL}_n$ is the $\mathbb{Z}[\tau, \tau^{-1}]$-algebra generated by elements $b_{s_i}=b_i$ for $i=1,\dots,n$ with relations 
\begin{eqnarray*}
b_j b_i b_j=b_j~\text{if $|i-j|=1$},\\
b_i b_j=b_j b_i~\text{if $|i-j|>1$},\\
b_i^2=(1+\tau^{-2}) b_i.
\end{eqnarray*}
\begin{rmq}\label{rmq:parametre}
Usually $\mathrm{TL}_n$ is defined with a formal parameter $v$ instead of $\tau$, the last relation being replaced by $b_i^2=(v+v^{-1}) b_i$, which allows $\mathrm{TL}_n$ to be realized as a quotient of the Hecke algebra of type $A_n$. The reason for choosing another parameter $\tau$ is that the bimodules $B_i$ defined before will satisfy the above relations where the multiplication in $\mathrm{TL}_n$ corresponds to the $\ast$ product, the sum to direct sums of bimodules and the parameter $\tau$ to a shift. In the case of Soergel bimodules categorifying the Hecke algebra, one defines the analog of our bimodule $B_i$ by $S_i':=R\otimes_{R^{s_i}} R$ ; it turns out that the relation $S_i'\otimes_R S_i'\cong S_i'\oplus S_i'[-2]$ is satisfied but one then sets $S_i:=S_i'[1]$ and the relation becomes $S_i\otimes_R S_i\cong S_i[1]\oplus S_i[-1]$. The parameter $v$ is then interpreted as a shift and such a relation corresponds to the relation $C_{s_i}'^2=(v+v^{-1}) C_{s_i}'$ which holds in the Hecke algebra, $C_{s_i}'$ being the element of the Kazhdan-Lusztig basis (defined in \cite{KL}) indexed by the simple reflection $s_i$. In our case shifting the bimodules $B_i$ as in Soergel's work is a priori not possible since the first relation defining $\mathrm{TL}_n$ is not homogeneous.  
\end{rmq}
\begin{defn}
Let $(\mathcal{W}, S)$ be an arbitrary Coxeter system with $S$ finite. An element $w\in \mathcal{W}$ is \textbf{fully commutative} or \textbf{braid avoiding} if one can pass from any reduced expression for $w$ to any other only by applying relations of the form $st=ts$ for $s, t\in S$.
\end{defn}
The set of fully commutative elements is denoted by $\mathcal{W}_c$. Now if $(\mathcal{W}, S)$ is of type $A$ and $w\in \mathcal{W}_c$ and $t_1\cdots t_k$ is a reduced expression for $w$, one can show that the element $b_w:=b_{t_1}\cdots b_{t_k}\in \mathrm{TL}_n$ is independent of the choice of the reduced expression for $w$ and that the set $\{b_w\}_{w\in\mathcal{W}_c}$ spans $\mathrm{TL}_n$ as a $\mathbb{Z}[\tau,\tau^{-1}]$-module.

\begin{defn}
The basis $\{b_w\}_{w\in\mathcal{W}_c}$ of $\mathrm{TL}_n$ is the \textbf{Kazhdan-Lusztig basis} of $\mathrm{TL}_n$.
\end{defn}
\begin{rmq}
This vocabulary is due to the fact that if we define $\mathrm{TL}_n$ algebra with a parameter $v$ instead of $\tau$ as mentioned in remark \ref{rmq:parametre} it is a quotient of $H_n$, the Hecke algebra of type $A_n$, and if $w\in\mathcal{W}_c$, the image in the quotient of the element $C_w'$ of the Kazhdan-Lusztig basis of $H_n$ is $b_w$ and any element $C_x'$ for $x\notin\mathcal{W}_c$ is sent to zero (see \cite{FG}, Theorem 3.8.2 for type $A$ or \cite{GL} for other types). 
\end{rmq}
The basis $\{b_w\}_{w\in\mathcal{W}_c}$ has a well-known interpretation by planar diagrams. Draw a sequence of $n+1$ points on a line and another one under the first one. Draw arcs between any two points of the two sequences (the two points of an arc can be on the same sequence) such that each point occurs in exactly one arc and such that two distinct arcs never cross to obtain a diagram like the one given in figure \ref{figure:diagramme} ; we always consider such diagrams up to isotopy. Elements of the Temperley-Lieb algebra are $\mathbb{Z}[\tau, \tau^{-1}]$-linear combinations of such diagrams, where the element $b_i=b_{s_i}$ is given by the diagram in figure \ref{figure:tli}.\begin{figure}
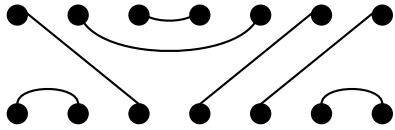

\begin{center}
\begin{psmatrix}[colsep=0.8,rowsep=0.8]
\pscircle*{0.14} & \pscircle*{0.14} & \pscircle*{0.14} & \pscircle*{0.14} & 
\pscircle*{0.14} & \pscircle*{0.14} & \pscircle*{0.14}\\
\pscircle*{0.14} & \pscircle*{0.14} & \pscircle*{0.14} & \pscircle*{0.14} & 
\pscircle*{0.14} & \pscircle*{0.14} & \pscircle*{0.14}\\
\end{psmatrix}
\ncline{1,1}{2,3}
\ncarc[arcangle=90]{2,1}{2,2}
\ncarc[arcangle=-90]{1,2}{1,5}
\ncarc[arcangle=-90]{1,3}{1,4}
\ncline{1,6}{2,4}
\ncline{1,7}{2,5}
\ncarc[arcangle=90]{2,6}{2,7}
\end{center}
\caption{A diagram representing an element of the Temperley-Lieb algebra} 
\label{figure:diagramme}
\end{figure}
\begin{figure}
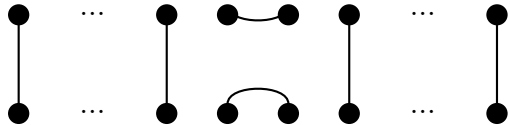

\begin{center}
\begin{psmatrix}[colsep=0.8,rowsep=0.8]
\pscircle*{0.14} & ... & \pscircle*{0.14} & \pscircle*{0.14} & 
\pscircle*{0.14} & \pscircle*{0.14} & ... & \pscircle*{0.14}\\
\pscircle*{0.14} & ... & \pscircle*{0.14} & \pscircle*{0.14} & 
\pscircle*{0.14} & \pscircle*{0.14} & ... & \pscircle*{0.14}\\
\end{psmatrix}
\ncline{1,1}{2,1}
\ncarc[arcangle=-90]{1,4}{1,5}
\ncarc[arcangle=90]{2,4}{2,5}
\ncline{1,3}{2,3}
\ncline{1,6}{2,6}
\ncline{1,8}{2,8}
\end{center}
\caption{Planar diagram corresponding to the element $b_i$, where the two arcs go from index $i$ to index $i+1$.} 
\label{figure:tli}
\end{figure}
Multiplication of two planar diagrams is then given by concatenating the diagrams ; if circles occur in the resulting diagram, we remove them and multiply the diagram by $(1+\tau^{-2})^k$ where $k$ is the number of circles. The diagram algebra over $\mathbb{Z}[\tau, \tau^{-1}]$ obtained in this way turns out to be isomorphic to $\mathrm{TL}_n$.  

\subsection{Temperley-Lieb relations}
The aim of this section is to prove that the bimodules $B_i$ together with the $\ast$ product from the previous section satisfy the Temperley-Lieb relations, i.e.,
\begin{eqnarray*}
B_j\ast B_i\ast B_j\cong B_j~\text{if $|i-j|=1$},\\
B_i\ast B_j\cong B_j\ast B_i~\text{if $|i-j|>1$},\\
B_i\ast B_i\cong B_i\oplus B_i[-2],
\end{eqnarray*}
\noindent where all the isomorphisms hold in $\bar{R}-\mathrm{mod}_{\mathbb{Z}}-\bar{R}$. 
\begin{thm}\label{thm:relations}
The bimodules $B_i$ satisfy the Temperley-Lieb relations.
\end{thm}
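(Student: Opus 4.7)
The plan is to treat the three Temperley-Lieb relations separately, in increasing order of difficulty, using three key tools: lemma \ref{lem:annulateurlibre} (for computing annihilators and free structures after $\ast$-products), lemma \ref{lem:annulateur} (for collapsing tensor products via their annihilators), and lemma \ref{lem:free} (which, for $|i-j|=1$, identifies the restriction maps $R_i^{s_i}\to R_{i,j}$ and $R_j^{s_j}\to R_{i,j}$ as graded ring isomorphisms --- this is crucial for the braid relation).

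For the commutation relation $B_i\ast B_j\cong B_j\ast B_i$ when $|i-j|>1$, I would unfold both sides using that $B_i,B_j$ are free over $R_i,R_j$ (lemma \ref{lem:bi}), obtaining $R_i\otimes_{R_i^{s_i}}R_{i,j}\otimes_{R_j^{s_j}}R_j$ on one side and the mirror expression on the other. Since $s_i$ and $s_j$ commute, $V_i\cap V_j$ is $\langle s_i,s_j\rangle$-stable and all rings involved are commutative, so the natural flip $a\otimes r\otimes b\mapsto b\otimes r\otimes a$ is a well-defined graded bimodule isomorphism (the defining relations of both tensor products are matched by commutativity of $R_{i,j}$).

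For the idempotent relation $B_i\ast B_i\cong B_i\oplus B_i[-2]$, compute $B_i\ast B_i=R_i\otimes_{R_i^{s_i}}R_i\otimes_{R_i^{s_i}}R_i$ using $\mathcal{O}(V_i\cap V_i)=R_i$. Apply the standard decomposition $R_i=R_i^{s_i}\oplus R_i^{s_i}f_i\cong R_i^{s_i}\oplus R_i^{s_i}[-2]$ as $R_i^{s_i}$-bimodules (valid because $R_i$ is commutative and $f_i$ has degree $2$), substituting it in for the middle $R_i$. This immediately yields $B_i\oplus B_i[-2]$ as graded $\bar R$-bimodules.

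For the braid-like relation $B_j\ast B_i\ast B_j\cong B_j$ with $|i-j|=1$, fix the bracketing $(B_j\ast B_i)\ast B_j$ using theorem \ref{thm:asso}. Unfolding gives $B_j\ast B_i\cong R_j\otimes_{R_j^{s_j}}R_{i,j}\otimes_{R_i^{s_i}}R_i$, whose right annihilator is $I(V_i)$ by lemma \ref{lem:annulateurlibre} and example \ref{ex:voisin}. Unfolding the second $\ast$-product then yields
$$R_j\otimes_{R_j^{s_j}}R_{i,j}\otimes_{R_i^{s_i}}R_{i,j}\otimes_{R_j^{s_j}}R_j.$$
The decisive collapse comes from lemma \ref{lem:free}: the graded ring isomorphisms $R_i^{s_i}\cong R_{i,j}\cong R_j^{s_j}$ reduce $R_{i,j}\otimes_{R_i^{s_i}}R_{i,j}$ to $R_{i,j}$ via multiplication, and then bring the whole expression to $R_j\otimes_{R_j^{s_j}}R_j=B_j$. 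The main obstacle I anticipate is the careful bookkeeping of bimodule structures and grading shifts across these tensor collapses; everything hinges on example \ref{ex:voisin}, i.e., the fact that $V_i\cap V_{i\pm 1}\cup s_i(V_i\cap V_{i\pm 1})=V_i$, which forces the restriction maps from the invariant subrings onto $R_{i,j}$ to be isomorphisms.
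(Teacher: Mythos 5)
Your treatment of the braid relation $B_j\ast B_i\ast B_j\cong B_j$ matches the paper's: unfold the tensor product and collapse $R_{i,j}\otimes_{R_i^{s_i}}R_{i,j}$ to $R_{i,j}$ via the multiplication map, using lemma \ref{lem:free}. Your treatment of $B_i\ast B_i\cong B_i\oplus B_i[-2]$ is also the paper's (decompose the middle $R_i$ as $R_i^{s_i}\oplus R_i^{s_i}f_i$), with one small caveat: the identification $R_i^{s_i}f_i\cong R_i^{s_i}[-2]$ is not merely a matter of commutativity and the degree of $f_i$; it requires that multiplication by $f_i$ on $R_i$ be injective, which holds because no irreducible component of $V_i$ lies in $H_{s_i}$ (this is where $\partial_{s_i}$ furnishes the inverse).

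The commutation relation $B_i\ast B_j\cong B_j\ast B_i$ for $|i-j|>1$ is where your argument breaks down. The flip $\phi\colon a\otimes r\otimes b\mapsto b\otimes r\otimes a$ from $R_i\otimes_{R_i^{s_i}}R_{i,j}\otimes_{R_j^{s_j}}R_j$ to $R_j\otimes_{R_j^{s_j}}R_{i,j}\otimes_{R_i^{s_i}}R_i$ is indeed well-defined (the defining relations match, as you note), but it is \emph{not} a morphism of $(\bar R,\bar R)$-bimodules. For $c\in\bar R$ one has $\phi\bigl(c\cdot(a\otimes r\otimes b)\bigr)=b\otimes r\otimes(c|_{V_i}a)$ whereas $c\cdot\phi(a\otimes r\otimes b)=(c|_{V_j}b)\otimes r\otimes a$; there is no reason a general $c$ can be slid across both tensor signs. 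Concretely, take $c=f_i$ and $a=r=b=1$: then $\phi(f_i\cdot(1\otimes 1\otimes 1))=1\otimes 1\otimes f_i$, while $f_i\cdot\phi(1\otimes 1\otimes 1)=f_i\otimes 1\otimes 1=1\otimes f_i\otimes 1=f_i|_{V_i\cap V_j}\cdot(1\otimes 1\otimes 1)$, and these are distinct elements of the rank-$4$ free left $R_{i,j}$-module $B_j\ast B_i$ (cf.\ proposition \ref{prop:distant}). The flip intertwines the left action on the source with the right action on the target, not left with left. The paper instead first collapses both sides (via proposition \ref{prop:distant} and lemma \ref{lem:annulateur}) to $(R_{i,j},R_{i,j})$-bimodules $R_{i,j}\otimes_{R_i^{s_i}}R_{i,j}\otimes_{R_j^{s_j}}R_{i,j}$ and $R_{i,j}\otimes_{R_j^{s_j}}R_{i,j}\otimes_{R_i^{s_i}}R_{i,j}$, and then constructs a map that keeps the outer factors in place: write the middle factor $n=r+r'f_i$ with $r,r'\in R_{i,j}^{s_i}$ and send $m\otimes n\otimes q\mapsto mr\otimes 1\otimes q+mr'\otimes 1\otimes f_iq$. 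This does preserve both the left and right $R_{i,j}$-actions, and its inverse is obtained by the same recipe with the roles of $i$ and $j$ swapped. You will need something of this form rather than the naive flip.
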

\begin{proof}
For short we write $R_i:=\mathcal{O}(V_i)$, $R_{i,j}=\mathcal{O}(V_i\cap V_j)$. For the first relation, suppose $j=i+1$, the other case being similar. The left hand side of the first relation which is isomorphic to $(B_j\ast B_i)\ast B_j$ can be rewritten thanks to corollary \ref{cor:nachbar}
$$(R_{i+1}\otimes_{R_{i+1}^{s_{i+1}}} R_{i+1}\otimes_{R_{i+1}} R_{i,i+1}\otimes_{R_i} R_i\otimes_{R_i^{s_i}} R_i)\otimes_{R_i} R_{i,i+1}\otimes_{R_{i+1}} (R_{i+1} \otimes_{R_{i+1}^{s_{i+1}}} R_{i+1}),$$ which is isomorphic to 
$$R_{i+1}\otimes_{R_{i+1}^{s_{i+1}}} R_{i,i+1}\otimes_{R_i^{s_i}} R_{i,i+1} \otimes_{R_{i+1}^{s_{i+1}}} R_{i+1}.$$
Hence it suffices to show that $R_{i,i+1}\otimes_{R_i^{s_i}} R_{i,i+1}\cong R_{i+1}^{s_{i+1}}$ as graded $(R_{i+1}^{s_{i+1}}, R_{i+1}^{s_{i+1}})$-bimodule. But $R_{i+1}^{s_{i+1}}$ is known to be isomorphic to $R_{i,i+1}$ thanks to lemma \ref{lem:free} (the left and right operations are the same hence this is a bimodule isomorphism). Define a map
$$\varphi: R_{i,i+1}\otimes_{R_i^{s_i}} R_{i,i+1}\rightarrow R_{i,i+1}$$
$$a\otimes b\mapsto ab.$$
This clearly defines a morphism of bimodules. Define a map 
$$\psi: R_{i,i+1}\rightarrow R_{i,i+1}\otimes_{R_i^{s_i}} R_{i,i+1} $$
$$c\mapsto c\otimes 1.$$
One checks using lemma \ref{lem:free} that this defines a morphism of bimodules which is an inverse to $\varphi$. Hence the first Temperley-Lieb relation holds.

For the second relation, using proposition \ref{prop:distant} and \ref{lem:annulateur}, it is enough to show that 
$$R_{i,j}\otimes_{R_i^{s_i}} R_{i,j}\otimes_{R_j^{s_j}} R_{i,j}\cong R_{i,j}\otimes_{R_j^{s_j}} R_{i,j}\otimes_{R_i^{s_i}} {R_{i,j}}$$
as graded $(R_{i,j}, R_{i,j})$-bimodules. Let $m, n, q\in R_{i,j}$. Since $V_i\cap V_j$ is $s_i$-invariant one has that $R_{i,j}=R_{i,j}^{s_i}\otimes R_{i,j}^{s_i} f_i$ ; write $n=r+r' f_i$ with $r, r'\in R_{i,j}^{s_i}$. Define a map 
$$\varphi : R_{i,j}\otimes_{R_i^{s_i}} R_{i,j}\otimes_{R_j^{s_j}} R_{i,j}\rightarrow R_{i,j}\otimes_{R_j^{s_j}} R_{i,j}\otimes_{R_i^{s_i}} \otimes R_{i,j}$$
$$m\otimes n\otimes q\mapsto mr\otimes 1\otimes q+mr'\otimes 1\otimes f_i q.$$
It is routine to check that such a map is well-defined and that it is a morphism of graded bimodules. By permuting the indices $i$ and $j$ one also gets a map $\psi$ in the other direction and one shows that $\psi$ is an inverse of $\varphi$. \\
\\
For the third relation one has to show that
$$R_i\otimes_{R_i^{s_i}} R_i\otimes_{R_i^{s_i}} R_i\cong (R_i\otimes_{R_i^{s_i}} R_i)\oplus (R_i\otimes_{R_i^{s_i}} R_i)[-2].$$
Now $R_i=R_i^{s_i}\oplus R_i^{s_i} f_i$ and since no irreducible component of $V_i$ is included in $H_{s_i}$ one has an $R_i^{s_i}$-(bi)module isomorphism $R_i^{s_i} f_i\cong R_i^{s_i}[-2]$ given by the restriction of the Demazure operator $\partial_{s_i}$ (which has in this case multiplication by $f_i$ as inverse). Hence $R_{i}\cong R_i^{s_i}\oplus R_i^{s_i}[-2]$ as graded $(R_i^{s_i}, R_i^{s_i})$-bimodule and one gets the claim by decomposing in such a way the $R_i$ in the middle of the above tensor product on the left hand side.
\end{proof}
\begin{defn}
Let $w\in\mathcal{W}_c$. Let $s_{i_1}\cdots s_{i_k}$ be a reduced expression for $w$. We consider the bimodule 
$$B_{i_1}\ast\cdots\ast B_{i_k}\in\bar{R}-\mathrm{mod}_{\mathbb{Z}}-\bar{R}.$$ Since bimodules $B_i$ satisfy the Temperley-Lieb relations, this bimodule is independent up to isomorphism of the choice of a reduced expression for $w$ and we label by $B_w$ any bimodule isomorphic to it in $\bar{R}-\mathrm{mod}_{\mathbb{Z}}-\bar{R}$. Such a bimodule $B_w$ will be called \textbf{fully commutative}. 
\end{defn}
\subsection{Link with dense sets of reflections}
For each fully commutative element $w\in\mathcal{W}_c$, one can consider the dense sets $T(i_1\cdots i_k)$ and $T(i_k\cdots i_1)$ where $s_{i_1}\cdots s_{i_k}$ is a reduced expression for $w$ ; such sets characterize the varieties whose ideals are the left and right annihilators in $\bar{R}$ of the bimodule $B_{i_1}\ast\cdots\ast B_{i_k}$. We have another way of associating a pair of dense sets to $w$:
\begin{nota} Let $w\in\mathcal{W}_c$ and consider the planar diagram corresponding to the element $b_w\in\mathrm{TL}_n$; if we remove the lines joining a point in the sequence at the top of the diagram to a point in the sequence at the bottom, we obtain a dense set at the top of the diagram, which we write $Q(i_1\cdots i_k)$, and a dense set at the bottom which we can write $Q(i_k\cdots i_1)$ since it is equal to the dense set obtained at the top of the diagram of $b_{w^{-1}}$ after applying the same process of removing lines going from the top to the bottom of the diagram (notice that $w^{-1}$ lies in $\mathcal{W}_c$ if and only if $w$ does).
\end{nota}
\begin{figure}
\begin{center}
\begin{tabular}{lr}

\begin{tabular}{|c|c|c|}
\hline
\begin{psmatrix}[colsep=0.35,rowsep=1.2]
\pscircle*{0.08} & \pscircle*{0.08} & \pscircle*{0.08} & \pscircle*{0.08} & 
\pscircle*{0.08} 
\end{psmatrix} & $e$ & \begin{psmatrix}[colsep=0.35,rowsep=1.2]
\pscircle*{0.08} & \pscircle*{0.08} & \pscircle*{0.08} & \pscircle*{0.08} & 
\pscircle*{0.08} 
\end{psmatrix}\\
& & \\
\hline
\begin{psmatrix}[colsep=0.35,rowsep=1.2]
\pscircle*{0.08} & \pscircle*{0.08} & \pscircle*{0.08} & \pscircle*{0.08} & 
\pscircle*{0.08}\\ 
\end{psmatrix}
\ncarc[arcangle=90]{1,1}{1,2} & $s_1$ & \begin{psmatrix}[colsep=0.35,rowsep=1.2]
\pscircle*{0.08} & \pscircle*{0.08} & \pscircle*{0.08} & \pscircle*{0.08} & 
\pscircle*{0.08}\\ 
\end{psmatrix}
\ncarc[arcangle=90]{1,1}{1,2}\\
& & \\
\hline
\begin{psmatrix}[colsep=0.35,rowsep=3]
\pscircle*{0.08} & \pscircle*{0.08} & \pscircle*{0.08} & \pscircle*{0.08} & 
\pscircle*{0.08}\\ 
\end{psmatrix}
\ncarc[arcangle=90]{1,2}{1,3} & $s_2$ & \begin{psmatrix}[colsep=0.35,rowsep=1.2]
\pscircle*{0.08} & \pscircle*{0.08} & \pscircle*{0.08} & \pscircle*{0.08} & 
\pscircle*{0.08}\\ 
\end{psmatrix}
\ncarc[arcangle=90]{1,2}{1,3}\\
& & \\
\hline
\begin{psmatrix}[colsep=0.35=0.5,rowsep=1.2]
\pscircle*{0.08} & \pscircle*{0.08} & \pscircle*{0.08} & \pscircle*{0.08} & 
\pscircle*{0.08}\\ 
\end{psmatrix}
\ncarc[arcangle=90]{1,3}{1,4} & $s_3$ & \begin{psmatrix}[colsep=0.35,rowsep=1.2]
\pscircle*{0.08} & \pscircle*{0.08} & \pscircle*{0.08} & \pscircle*{0.08} & 
\pscircle*{0.08}\\ 
\end{psmatrix}
\ncarc[arcangle=90]{1,3}{1,4}\\
& & \\
\hline
\begin{psmatrix}[colsep=0.35,rowsep=1.2]
\pscircle*{0.08} & \pscircle*{0.08} & \pscircle*{0.08} & \pscircle*{0.08} & 
\pscircle*{0.08}\\ 
\end{psmatrix}
\ncarc[arcangle=90]{1,4}{1,5} & $s_4$ & \begin{psmatrix}[colsep=0.35,rowsep=1.2]
\pscircle*{0.08} & \pscircle*{0.08} & \pscircle*{0.08} & \pscircle*{0.08} & 
\pscircle*{0.08}\\ 
\end{psmatrix}
\ncarc[arcangle=90]{1,4}{1,5}\\
& & \\
\hline
\begin{psmatrix}[colsep=0.35,rowsep=1.2]
\pscircle*{0.08} & \pscircle*{0.08} & \pscircle*{0.08} & \pscircle*{0.08} & 
\pscircle*{0.08} 
\end{psmatrix}
\ncarc[arcangle=90]{1,1}{1,2} & $s_1 s_2$ & \begin{psmatrix}[colsep=0.35,rowsep=1.2]
\pscircle*{0.08} & \pscircle*{0.08} & \pscircle*{0.08} & \pscircle*{0.08} & 
\pscircle*{0.08} 
\end{psmatrix}
\ncarc[arcangle=90]{1,2}{1,3}\\
& & \\
\hline
\begin{psmatrix}[colsep=0.35,rowsep=1.2]
\pscircle*{0.08} & \pscircle*{0.08} & \pscircle*{0.08} & \pscircle*{0.08} & 
\pscircle*{0.08} 
\end{psmatrix}
\ncarc[arcangle=90]{1,2}{1,3} & $s_2 s_1$ & \begin{psmatrix}[colsep=0.35,rowsep=1.2]
\pscircle*{0.08} & \pscircle*{0.08} & \pscircle*{0.08} & \pscircle*{0.08} & 
\pscircle*{0.08}\\ 
\end{psmatrix}
\ncarc[arcangle=90]{1,1}{1,2}\\
& & \\
\hline
\begin{psmatrix}[colsep=0.35,rowsep=1.2]
\pscircle*{0.08} & \pscircle*{0.08} & \pscircle*{0.08} & \pscircle*{0.08} & 
\pscircle*{0.08}\\ 
\end{psmatrix}
\ncarc[arcangle=90]{1,1}{1,2}
\ncarc[arcangle=90]{1,3}{1,4} & $s_1 s_3$ & \begin{psmatrix}[colsep=0.35,rowsep=1.2]
\pscircle*{0.08} & \pscircle*{0.08} & \pscircle*{0.08} & \pscircle*{0.08} & 
\pscircle*{0.08}\\ 
\end{psmatrix}
\ncarc[arcangle=90]{1,1}{1,2}
\ncarc[arcangle=90]{1,3}{1,4}\\
& & \\
\hline
\begin{psmatrix}[colsep=0.35,rowsep=1.2]
\pscircle*{0.08} & \pscircle*{0.08} & \pscircle*{0.08} & \pscircle*{0.08} & 
\pscircle*{0.08}\\ 
\end{psmatrix}
\ncarc[arcangle=90]{1,1}{1,2}
\ncarc[arcangle=90]{1,4}{1,5} & $s_1 s_4$ & \begin{psmatrix}[colsep=0.35,rowsep=1.2]
\pscircle*{0.08} & \pscircle*{0.08} & \pscircle*{0.08} & \pscircle*{0.08} & 
\pscircle*{0.08}\\ 
\end{psmatrix}
\ncarc[arcangle=90]{1,1}{1,2}
\ncarc[arcangle=90]{1,4}{1,5}\\
& & \\
\hline
\begin{psmatrix}[colsep=0.35,rowsep=1.2]
\pscircle*{0.08} & \pscircle*{0.08} & \pscircle*{0.08} & \pscircle*{0.08} & 
\pscircle*{0.08}\\ 
\end{psmatrix}
\ncarc[arcangle=90]{1,2}{1,3}
\ncarc[arcangle=90]{1,4}{1,5} & $s_2 s_4$ & \begin{psmatrix}[colsep=0.35,rowsep=1.2]
\pscircle*{0.08} & \pscircle*{0.08} & \pscircle*{0.08} & \pscircle*{0.08} & 
\pscircle*{0.08}\\ 
\end{psmatrix}
\ncarc[arcangle=90]{1,2}{1,3}
\ncarc[arcangle=90]{1,4}{1,5}\\
& & \\
\hline
\begin{psmatrix}[colsep=0.35,rowsep=1.2]
\pscircle*{0.08} & \pscircle*{0.08} & \pscircle*{0.08} & \pscircle*{0.08} & 
\pscircle*{0.08} 
\end{psmatrix}
\ncarc[arcangle=90]{1,2}{1,3} & $s_2 s_3$ & \begin{psmatrix}[colsep=0.35,rowsep=1.2]
\pscircle*{0.08} & \pscircle*{0.08} & \pscircle*{0.08} & \pscircle*{0.08} & 
\pscircle*{0.08} 
\end{psmatrix}
\ncarc[arcangle=90]{1,3}{1,4}\\
& & \\
\hline
\begin{psmatrix}[colsep=0.35,rowsep=1.2]
\pscircle*{0.08} & \pscircle*{0.08} & \pscircle*{0.08} & \pscircle*{0.08} & 
\pscircle*{0.08} 
\end{psmatrix}
\ncarc[arcangle=90]{1,3}{1,4} & $s_3 s_2$ & \begin{psmatrix}[colsep=0.35,rowsep=1.2]
\pscircle*{0.08} & \pscircle*{0.08} & \pscircle*{0.08} & \pscircle*{0.08} & 
\pscircle*{0.08}\\ 
\end{psmatrix}
\ncarc[arcangle=90]{1,2}{1,3}\\
& & \\
\hline
\begin{psmatrix}[colsep=0.35,rowsep=1.2]
\pscircle*{0.08} & \pscircle*{0.08} & \pscircle*{0.08} & \pscircle*{0.08} & 
\pscircle*{0.08}\\ 
\end{psmatrix}
\ncarc[arcangle=90]{1,3}{1,4} & $s_3 s_4$ & \begin{psmatrix}[colsep=0.35,rowsep=1.2]
\pscircle*{0.08} & \pscircle*{0.08} & \pscircle*{0.08} & \pscircle*{0.08} & 
\pscircle*{0.08}\\ 
\end{psmatrix}
\ncarc[arcangle=90]{1,4}{1,5}\\
& & \\
\hline
\begin{psmatrix}[colsep=0.35,rowsep=1.2]
\pscircle*{0.08} & \pscircle*{0.08} & \pscircle*{0.08} & \pscircle*{0.08} & 
\pscircle*{0.08}\\ 
\end{psmatrix}
\ncarc[arcangle=90]{1,4}{1,5} & $s_4 s_3$ & \begin{psmatrix}[colsep=0.35,rowsep=1.2]
\pscircle*{0.08} & \pscircle*{0.08} & \pscircle*{0.08} & \pscircle*{0.08} & 
\pscircle*{0.08}\\ 
\end{psmatrix}
\ncarc[arcangle=90]{1,3}{1,4}\\
& & \\
\hline
\begin{psmatrix}[colsep=0.35,rowsep=1.2]
\pscircle*{0.08} & \pscircle*{0.08} & \pscircle*{0.08} & \pscircle*{0.08} & 
\pscircle*{0.08}\\ 
\end{psmatrix}
\ncarc[arcangle=90]{1,1}{1,2} & $s_1 s_2 s_3$ & \begin{psmatrix}[colsep=0.35,rowsep=1.2]
\pscircle*{0.08} & \pscircle*{0.08} & \pscircle*{0.08} & \pscircle*{0.08} & 
\pscircle*{0.08}\\ 
\end{psmatrix}
\ncarc[arcangle=90]{1,3}{1,4}\\
& & \\
\hline
\begin{psmatrix}[colsep=0.35,rowsep=1.2]
\pscircle*{0.08} & \pscircle*{0.08} & \pscircle*{0.08} & \pscircle*{0.08} & 
\pscircle*{0.08}\\ 
\end{psmatrix}
\ncarc[arcangle=90]{1,1}{1,4}
\ncarc[arcangle=90]{1,2}{1,3} & $s_2 s_1 s_3$ & \begin{psmatrix}[colsep=0.35,rowsep=1.2]
\pscircle*{0.08} & \pscircle*{0.08} & \pscircle*{0.08} & \pscircle*{0.08} & 
\pscircle*{0.08}\\ 
\end{psmatrix}
\ncarc[arcangle=90]{1,1}{1,2}
\ncarc[arcangle=90]{1,3}{1,4}\\
& & \\
\hline
\begin{psmatrix}[colsep=0.35,rowsep=1.2]
\pscircle*{0.08} & \pscircle*{0.08} & \pscircle*{0.08} & \pscircle*{0.08} & 
\pscircle*{0.08}\\ 
\end{psmatrix}
\ncarc[arcangle=90]{1,1}{1,2}
\ncarc[arcangle=90]{1,3}{1,4} & $s_3 s_1 s_2$ & \begin{psmatrix}[colsep=0.35,rowsep=1.2]
\pscircle*{0.08} & \pscircle*{0.08} & \pscircle*{0.08} & \pscircle*{0.08} & 
\pscircle*{0.08}\\ 
\end{psmatrix}
\ncarc[arcangle=90]{1,1}{1,4}
\ncarc[arcangle=90]{1,2}{1,3}\\
& & \\
\hline
\begin{psmatrix}[colsep=0.35,rowsep=1.2]
\pscircle*{0.08} & \pscircle*{0.08} & \pscircle*{0.08} & \pscircle*{0.08} & 
\pscircle*{0.08} 
\end{psmatrix}
\ncarc[arcangle=90]{1,3}{1,4} & $s_3 s_2 s_1$ & \begin{psmatrix}[colsep=0.35,rowsep=1.2]
\pscircle*{0.08} & \pscircle*{0.08} & \pscircle*{0.08} & \pscircle*{0.08} & 
\pscircle*{0.08} 
\end{psmatrix}
\ncarc[arcangle=90]{1,1}{1,2}\\
& & \\
\hline
\begin{psmatrix}[colsep=0.35,rowsep=1.2]
\pscircle*{0.08} & \pscircle*{0.08} & \pscircle*{0.08} & \pscircle*{0.08} & 
\pscircle*{0.08} 
\end{psmatrix}
\ncarc[arcangle=90]{1,2}{1,3} & $s_2 s_3 s_4$ & \begin{psmatrix}[colsep=0.35,rowsep=1.2]
\pscircle*{0.08} & \pscircle*{0.08} & \pscircle*{0.08} & \pscircle*{0.08} & 
\pscircle*{0.08}\\ 
\end{psmatrix}
\ncarc[arcangle=90]{1,4}{1,5}\\
& & \\
\hline
\begin{psmatrix}[colsep=0.35,rowsep=1.2]
\pscircle*{0.08} & \pscircle*{0.08} & \pscircle*{0.08} & \pscircle*{0.08} & 
\pscircle*{0.08}\\ 
\end{psmatrix}
\ncarc[arcangle=90]{1,2}{1,5}
\ncarc[arcangle=90]{1,3}{1,4} & $s_3 s_2 s_4$ & \begin{psmatrix}[colsep=0.35,rowsep=1.2]
\pscircle*{0.08} & \pscircle*{0.08} & \pscircle*{0.08} & \pscircle*{0.08} & 
\pscircle*{0.08}\\ 
\end{psmatrix}
\ncarc[arcangle=90]{1,2}{1,3}
\ncarc[arcangle=90]{1,4}{1,5}\\
& & \\
\hline
\begin{psmatrix}[colsep=0.35,rowsep=1.2]
\pscircle*{0.08} & \pscircle*{0.08} & \pscircle*{0.08} & \pscircle*{0.08} & 
\pscircle*{0.08}\\ 
\end{psmatrix}
\ncarc[arcangle=90]{1,2}{1,3}
\ncarc[arcangle=90]{1,4}{1,5} & $s_4 s_2 s_3$ & \begin{psmatrix}[colsep=0.35,rowsep=1.2]
\pscircle*{0.08} & \pscircle*{0.08} & \pscircle*{0.08} & \pscircle*{0.08} & 
\pscircle*{0.08}\\ 
\end{psmatrix}
\ncarc[arcangle=90]{1,2}{1,5}
\ncarc[arcangle=90]{1,3}{1,4}\\
& & \\
\hline
\end{tabular}
&
\begin{tabular}{|c|c|c|}
\hline
\begin{psmatrix}[colsep=0.35,rowsep=1.2]
\pscircle*{0.08} & \pscircle*{0.08} & \pscircle*{0.08} & \pscircle*{0.08} & 
\pscircle*{0.08} 
\end{psmatrix}
\ncarc[arcangle=90]{1,4}{1,5} & $s_4 s_3 s_2$ & \begin{psmatrix}[colsep=0.35,rowsep=1.2]
\pscircle*{0.08} & \pscircle*{0.08} & \pscircle*{0.08} & \pscircle*{0.08} & 
\pscircle*{0.08} 
\end{psmatrix}
\ncarc[arcangle=90]{1,2}{1,3}\\
& & \\
\hline
\begin{psmatrix}[colsep=0.35,rowsep=1.2]
\pscircle*{0.08} & \pscircle*{0.08} & \pscircle*{0.08} & \pscircle*{0.08} & 
\pscircle*{0.08}\\ 
\end{psmatrix}
\ncarc[arcangle=90]{1,1}{1,2}
\ncarc[arcangle=90]{1,4}{1,5} & $s_1 s_2 s_4$ & \begin{psmatrix}[colsep=0.35,rowsep=1.2]
\pscircle*{0.08} & \pscircle*{0.08} & \pscircle*{0.08} & \pscircle*{0.08} & 
\pscircle*{0.08}\\ 
\end{psmatrix}
\ncarc[arcangle=90]{1,2}{1,3}
\ncarc[arcangle=90]{1,4}{1,5}\\
& & \\
\hline
\begin{psmatrix}[colsep=0.35,rowsep=3]
\pscircle*{0.08} & \pscircle*{0.08} & \pscircle*{0.08} & \pscircle*{0.08} & 
\pscircle*{0.08}\\ 
\end{psmatrix}
\ncarc[arcangle=90]{1,2}{1,3}
\ncarc[arcangle=90]{1,4}{1,5} & $s_2 s_1 s_4$ & \begin{psmatrix}[colsep=0.35,rowsep=1.2]
\pscircle*{0.08} & \pscircle*{0.08} & \pscircle*{0.08} & \pscircle*{0.08} & 
\pscircle*{0.08}\\ 
\end{psmatrix}
\ncarc[arcangle=90]{1,1}{1,2}
\ncarc[arcangle=90]{1,4}{1,5}\\
& & \\
\hline
\begin{psmatrix}[colsep=0.35=0.5,rowsep=1.2]
\pscircle*{0.08} & \pscircle*{0.08} & \pscircle*{0.08} & \pscircle*{0.08} & 
\pscircle*{0.08}\\ 
\end{psmatrix}
\ncarc[arcangle=90]{1,1}{1,2}
\ncarc[arcangle=90]{1,3}{1,4} & $s_1 s_3 s_4$ & \begin{psmatrix}[colsep=0.35,rowsep=1.2]
\pscircle*{0.08} & \pscircle*{0.08} & \pscircle*{0.08} & \pscircle*{0.08} & 
\pscircle*{0.08}\\ 
\end{psmatrix}
\ncarc[arcangle=90]{1,1}{1,2}
\ncarc[arcangle=90]{1,4}{1,5}\\
& & \\
\hline
\begin{psmatrix}[colsep=0.35,rowsep=1.2]
\pscircle*{0.08} & \pscircle*{0.08} & \pscircle*{0.08} & \pscircle*{0.08} & 
\pscircle*{0.08}\\ 
\end{psmatrix}
\ncarc[arcangle=90]{1,1}{1,2}
\ncarc[arcangle=90]{1,4}{1,5} & $s_1 s_4 s_3$ & \begin{psmatrix}[colsep=0.35,rowsep=1.2]
\pscircle*{0.08} & \pscircle*{0.08} & \pscircle*{0.08} & \pscircle*{0.08} & 
\pscircle*{0.08}\\ 
\end{psmatrix}
\ncarc[arcangle=90]{1,1}{1,2}
\ncarc[arcangle=90]{1,3}{1,4}\\
& & \\
\hline
\begin{psmatrix}[colsep=0.35,rowsep=1.2]
\pscircle*{0.08} & \pscircle*{0.08} & \pscircle*{0.08} & \pscircle*{0.08} & 
\pscircle*{0.08} 
\end{psmatrix}
\ncarc[arcangle=90]{1,1}{1,2} & $s_1 s_2 s_3 s_4$ & \begin{psmatrix}[colsep=0.35,rowsep=1.2]
\pscircle*{0.08} & \pscircle*{0.08} & \pscircle*{0.08} & \pscircle*{0.08} & 
\pscircle*{0.08} 
\end{psmatrix}
\ncarc[arcangle=90]{1,4}{1,5}\\
& & \\
\hline
\begin{psmatrix}[colsep=0.35,rowsep=1.2]
\pscircle*{0.08} & \pscircle*{0.08} & \pscircle*{0.08} & \pscircle*{0.08} & 
\pscircle*{0.08} 
\end{psmatrix}
\ncarc[arcangle=90]{1,1}{1,4}
\ncarc[arcangle=90]{1,2}{1,3} & $s_2 s_1 s_3 s_4$ & \begin{psmatrix}[colsep=0.35,rowsep=1.2]
\pscircle*{0.08} & \pscircle*{0.08} & \pscircle*{0.08} & \pscircle*{0.08} & 
\pscircle*{0.08}\\ 
\end{psmatrix}
\ncarc[arcangle=90]{1,1}{1,2}
\ncarc[arcangle=90]{1,4}{1,5}\\
& & \\
\hline
\begin{psmatrix}[colsep=0.35,rowsep=1.2]
\pscircle*{0.08} & \pscircle*{0.08} & \pscircle*{0.08} & \pscircle*{0.08} & 
\pscircle*{0.08}\\ 
\end{psmatrix}
\ncarc[arcangle=90]{1,1}{1,2}
\ncarc[arcangle=90]{1,3}{1,4} & $s_1 s_3 s_2 s_4$ & \begin{psmatrix}[colsep=0.35,rowsep=1.2]
\pscircle*{0.08} & \pscircle*{0.08} & \pscircle*{0.08} & \pscircle*{0.08} & 
\pscircle*{0.08}\\ 
\end{psmatrix}
\ncarc[arcangle=90]{1,2}{1,3}
\ncarc[arcangle=90]{1,4}{1,5}\\
& & \\
\hline
\begin{psmatrix}[colsep=0.35,rowsep=1.2]
\pscircle*{0.08} & \pscircle*{0.08} & \pscircle*{0.08} & \pscircle*{0.08} & 
\pscircle*{0.08}\\ 
\end{psmatrix}
\ncarc[arcangle=90]{1,1}{1,2}
\ncarc[arcangle=90]{1,4}{1,5} & $s_1 s_2 s_4 s_3$ & \begin{psmatrix}[colsep=0.35,rowsep=1.2]
\pscircle*{0.08} & \pscircle*{0.08} & \pscircle*{0.08} & \pscircle*{0.08} & 
\pscircle*{0.08}\\ 
\end{psmatrix}
\ncarc[arcangle=90]{1,2}{1,5}
\ncarc[arcangle=90]{1,3}{1,4}\\
& & \\
\hline
\begin{psmatrix}[colsep=0.35,rowsep=1.2]
\pscircle*{0.08} & \pscircle*{0.08} & \pscircle*{0.08} & \pscircle*{0.08} & 
\pscircle*{0.08}\\ 
\end{psmatrix}
\ncarc[arcangle=90]{1,1}{1,2}
\ncarc[arcangle=90]{1,4}{1,5} & $s_1 s_4 s_3 s_2$ & \begin{psmatrix}[colsep=0.35,rowsep=1.2]
\pscircle*{0.08} & \pscircle*{0.08} & \pscircle*{0.08} & \pscircle*{0.08} & 
\pscircle*{0.08}\\ 
\end{psmatrix}
\ncarc[arcangle=90]{1,1}{1,4}
\ncarc[arcangle=90]{1,2}{1,3}\\
& & \\
\hline
\begin{psmatrix}[colsep=0.35,rowsep=1.2]
\pscircle*{0.08} & \pscircle*{0.08} & \pscircle*{0.08} & \pscircle*{0.08} & 
\pscircle*{0.08} 
\end{psmatrix}
\ncarc[arcangle=90]{1,2}{1,3}
\ncarc[arcangle=90]{1,4}{1,5} & $s_2 s_1 s_4 s_3$ & \begin{psmatrix}[colsep=0.35,rowsep=1.2]
\pscircle*{0.08} & \pscircle*{0.08} & \pscircle*{0.08} & \pscircle*{0.08} & 
\pscircle*{0.08} 
\end{psmatrix}
\ncarc[arcangle=90]{1,1}{1,2}
\ncarc[arcangle=90]{1,3}{1,4}\\
& & \\
\hline
\begin{psmatrix}[colsep=0.35,rowsep=1.2]
\pscircle*{0.08} & \pscircle*{0.08} & \pscircle*{0.08} & \pscircle*{0.08} & 
\pscircle*{0.08} 
\end{psmatrix}
\ncarc[arcangle=90]{1,2}{1,5}
\ncarc[arcangle=90]{1,3}{1,4} & $s_3 s_4 s_2 s_1$ & \begin{psmatrix}[colsep=0.35,rowsep=1.2]
\pscircle*{0.08} & \pscircle*{0.08} & \pscircle*{0.08} & \pscircle*{0.08} & 
\pscircle*{0.08}\\ 
\end{psmatrix}
\ncarc[arcangle=90]{1,1}{1,2}
\ncarc[arcangle=90]{1,4}{1,5}\\
& & \\
\hline
\begin{psmatrix}[colsep=0.35,rowsep=1.2]
\pscircle*{0.08} & \pscircle*{0.08} & \pscircle*{0.08} & \pscircle*{0.08} & 
\pscircle*{0.08}\\ 
\end{psmatrix}
\ncarc[arcangle=90]{1,4}{1,5} & $s_4 s_3 s_2 s_1$ & \begin{psmatrix}[colsep=0.35,rowsep=1.2]
\pscircle*{0.08} & \pscircle*{0.08} & \pscircle*{0.08} & \pscircle*{0.08} & 
\pscircle*{0.08}\\ 
\end{psmatrix}
\ncarc[arcangle=90]{1,1}{1,2}\\
& & \\
\hline
\begin{psmatrix}[colsep=0.35,rowsep=1.2]
\pscircle*{0.08} & \pscircle*{0.08} & \pscircle*{0.08} & \pscircle*{0.08} & 
\pscircle*{0.08}\\ 
\end{psmatrix}
\ncarc[arcangle=90]{1,1}{1,4}
\ncarc[arcangle=90]{1,2}{1,3} & $s_2 s_1 s_3 s_2$ & \begin{psmatrix}[colsep=0.35,rowsep=1.2]
\pscircle*{0.08} & \pscircle*{0.08} & \pscircle*{0.08} & \pscircle*{0.08} & 
\pscircle*{0.08}\\ 
\end{psmatrix}
\ncarc[arcangle=90]{1,1}{1,4}
\ncarc[arcangle=90]{1,2}{1,3}\\
& & \\
\hline
\begin{psmatrix}[colsep=0.35,rowsep=1.2]
\pscircle*{0.08} & \pscircle*{0.08} & \pscircle*{0.08} & \pscircle*{0.08} & 
\pscircle*{0.08}\\ 
\end{psmatrix}
\ncarc[arcangle=90]{1,2}{1,5}
\ncarc[arcangle=90]{1,3}{1,4} & $s_3 s_2 s_4 s_3$ & \begin{psmatrix}[colsep=0.35,rowsep=1.2]
\pscircle*{0.08} & \pscircle*{0.08} & \pscircle*{0.08} & \pscircle*{0.08} & 
\pscircle*{0.08}\\ 
\end{psmatrix}
\ncarc[arcangle=90]{1,2}{1,5}
\ncarc[arcangle=90]{1,3}{1,4}\\
& & \\
\hline
\begin{psmatrix}[colsep=0.35,rowsep=1.2]
\pscircle*{0.08} & \pscircle*{0.08} & \pscircle*{0.08} & \pscircle*{0.08} & 
\pscircle*{0.08}\\ 
\end{psmatrix}
\ncarc[arcangle=90]{1,2}{1,3}
\ncarc[arcangle=90]{1,4}{1,5} & $s_4 s_2 s_3 s_1 s_2$ & \begin{psmatrix}[colsep=0.35,rowsep=1.2]
\pscircle*{0.08} & \pscircle*{0.08} & \pscircle*{0.08} & \pscircle*{0.08} & 
\pscircle*{0.08}\\ 
\end{psmatrix}
\ncarc[arcangle=90]{1,1}{1,4}
\ncarc[arcangle=90]{1,2}{1,3}\\
& & \\
\hline
\begin{psmatrix}[colsep=0.35,rowsep=1.2]
\pscircle*{0.08} & \pscircle*{0.08} & \pscircle*{0.08} & \pscircle*{0.08} & 
\pscircle*{0.08}\\ 
\end{psmatrix}
\ncarc[arcangle=90]{1,2}{1,5}
\ncarc[arcangle=90]{1,3}{1,4} & $s_3 s_4 s_2 s_3 s_1$ & \begin{psmatrix}[colsep=0.35,rowsep=1.2]
\pscircle*{0.08} & \pscircle*{0.08} & \pscircle*{0.08} & \pscircle*{0.08} & 
\pscircle*{0.08}\\ 
\end{psmatrix}
\ncarc[arcangle=90]{1,1}{1,2}
\ncarc[arcangle=90]{1,3}{1,4}\\
& & \\
\hline
\begin{psmatrix}[colsep=0.35,rowsep=1.2]
\pscircle*{0.08} & \pscircle*{0.08} & \pscircle*{0.08} & \pscircle*{0.08} & 
\pscircle*{0.08} 
\end{psmatrix}
\ncarc[arcangle=90]{1,1}{1,4}
\ncarc[arcangle=90]{1,2}{1,3} & $s_2 s_1 s_3 s_2 s_4$ & \begin{psmatrix}[colsep=0.35,rowsep=1.2]
\pscircle*{0.08} & \pscircle*{0.08} & \pscircle*{0.08} & \pscircle*{0.08} & 
\pscircle*{0.08} 
\end{psmatrix}
\ncarc[arcangle=90]{1,2}{1,3}
\ncarc[arcangle=90]{1,4}{1,5}\\
& & \\
\hline
\begin{psmatrix}[colsep=0.35,rowsep=1.2]
\pscircle*{0.08} & \pscircle*{0.08} & \pscircle*{0.08} & \pscircle*{0.08} & 
\pscircle*{0.08} 
\end{psmatrix}
\ncarc[arcangle=90]{1,1}{1,2}
\ncarc[arcangle=90]{1,3}{1,4} & $s_1 s_3 s_2 s_4 s_3$ & \begin{psmatrix}[colsep=0.35,rowsep=1.2]
\pscircle*{0.08} & \pscircle*{0.08} & \pscircle*{0.08} & \pscircle*{0.08} & 
\pscircle*{0.08}\\ 
\end{psmatrix}
\ncarc[arcangle=90]{1,2}{1,5}
\ncarc[arcangle=90]{1,3}{1,4}\\
& & \\
\hline
\begin{psmatrix}[colsep=0.35,rowsep=1.2]
\pscircle*{0.08} & \pscircle*{0.08} & \pscircle*{0.08} & \pscircle*{0.08} & 
\pscircle*{0.08}\\ 
\end{psmatrix}
\ncarc[arcangle=90]{1,2}{1,5}
\ncarc[arcangle=90]{1,3}{1,4} & $s_2 s_1 s_3 s_2 s_4 s_3$ & \begin{psmatrix}[colsep=0.35,rowsep=1.2]
\pscircle*{0.08} & \pscircle*{0.08} & \pscircle*{0.08} & \pscircle*{0.08} & 
\pscircle*{0.08}\\ 
\end{psmatrix}
\ncarc[arcangle=90]{1,2}{1,5}
\ncarc[arcangle=90]{1,3}{1,4}\\
& & \\
\hline
\begin{psmatrix}[colsep=0.35,rowsep=1.2]
\pscircle*{0.08} & \pscircle*{0.08} & \pscircle*{0.08} & \pscircle*{0.08} & 
\pscircle*{0.08}\\ 
\end{psmatrix}
\ncarc[arcangle=90]{1,2}{1,5}
\ncarc[arcangle=90]{1,3}{1,4} & $s_3 s_4 s_2 s_3 s_1 s_2$ & \begin{psmatrix}[colsep=0.35,rowsep=1.2]
\pscircle*{0.08} & \pscircle*{0.08} & \pscircle*{0.08} & \pscircle*{0.08} & 
\pscircle*{0.08}\\ 
\end{psmatrix}
\ncarc[arcangle=90]{1,1}{1,4}
\ncarc[arcangle=90]{1,2}{1,3}\\
& & \\
\hline
\end{tabular}
\end{tabular}
\caption{Left and right dense sets of reflections for any fully commutative element in type $A_4$.}
\end{center}
\end{figure}
\begin{prop}\label{prop:digne}
Let $w\in\mathcal{W}_c$ and suppose $w=s_{i_1}\cdots s_{i_k}$ is a reduced expression. Then
$$T(i_1\cdots i_k)=Q(i_1\cdots i_k).$$
\end{prop}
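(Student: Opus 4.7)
The plan is to argue by induction on the length $k$ of the reduced expression. The base case $k=1$ is immediate: $T(i_1)=\{s_{i_1}\}$ and $Q(i_1)$ is the single top arc $(i_1,i_1+1)$, which represents the transposition $s_{i_1}$.

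For the inductive step, write $Q':=Q(i_2\cdots i_k)$, which by the induction hypothesis equals $T(i_2\cdots i_k)$ (here one uses that a suffix of a reduced expression for a fully commutative element is again a reduced expression for a fully commutative element). The strategy is to compare two descriptions of how $Q'$ transforms when $s_{i_1}$ is prepended: on the algebraic side, lemma \ref{lem:reflexions-recurrence} gives three explicit rules computing $T(i_1 i_2\cdots i_k)=T_{s_{i_1}\cdot W_{i_2\cdots i_k}}$ from $Q'$, selected according to how many reflections of $Q'$ fail to commute with $s_{i_1}$. On the diagrammatic side, $Q(i_1 i_2\cdots i_k)$ is obtained by stacking the elementary planar diagram of $b_{s_{i_1}}$ on top of the diagram of $b_{i_2}\cdots b_{i_k}$ and tracing paths to identify the top arcs. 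I will then check case by case that the two prescriptions coincide.

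An arc $(a,b)\in Q'$ fails to commute with $s_{i_1}=(i_1,i_1+1)$ precisely when exactly one of $a,b$ lies in $\{i_1,i_1+1\}$. If no arc of $Q'$ meets $\{i_1,i_1+1\}$ (or the only such arc is $s_{i_1}$ itself, which creates a closed loop when stacking), then the top arc of $b_{s_{i_1}}$ contributes $(i_1,i_1+1)$ and every other arc of $Q'$ passes unchanged through the through-lines of $b_{s_{i_1}}$; the result is $Q'\cup\{s_{i_1}\}$, matching case one of the lemma. If exactly one arc $t\in Q'$ meets $\{i_1,i_1+1\}$, the trace shows that $t$ gets opened into a through-line of the composite while the top arc of $b_{s_{i_1}}$ again contributes $(i_1,i_1+1)$, yielding $(Q'\setminus\{t\})\cup\{s_{i_1}\}$, as in case two.

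The delicate third case is when $Q'$ contains two arcs $t,t'$ each meeting $\{i_1,i_1+1\}$; by noncrossing these must have one endpoint at $i_1$ and the other at $i_1+1$. Tracing through the composite, the bottom arc $(i_1,i_1+1)$ of $b_{s_{i_1}}$ glues $t$ and $t'$ together to form a single new top arc between the two remaining endpoints, while the top arc of $b_{s_{i_1}}$ contributes $(i_1,i_1+1)$. The main obstacle is to verify by direct permutation computation that the transposition $tt's_{i_1}t't$ appearing in lemma \ref{lem:reflexions-recurrence} is exactly this new arc; this must be checked across the sub-configurations (determined by whether the non-$\{i_1,i_1+1\}$ endpoints of $t,t'$ lie to the left or right of $[i_1,i_1+1]$, with certain arrangements ruled out by noncrossing). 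Once this calculation is in hand, the three cases line up with the three rules of the lemma and the induction closes.
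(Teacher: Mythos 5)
Your proposal is correct and follows essentially the same route as the paper: induction on $k$, then in the inductive step matching the three cases of lemma \ref{lem:reflexions-recurrence} against the diagrammatic effect of stacking the elementary diagram of $b_{s_{i_1}}$ on top of that of $b_{s_{i_2}}\cdots b_{s_{i_k}}$. Your parenthetical about $s_{i_1}$ itself lying in $Q'$ is vacuous since the expression is reduced (a closed loop would force $b_w=(1+\tau^{-2})b_{s_{i_2}\cdots s_{i_k}}$, impossible for two distinct basis elements), but this does not affect the argument.
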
 
\begin{proof}
We use induction on $k$ ; if $k=1$, then $T(i_1)=\{s_{i_1}\}$ and the dense set at the top of the diagram corresponding to $b_{i_1}$ contains only the reflection $s_{i_1}$. We suppose that the result holds for a sequence of length at most $k-1$. By induction, $T(i_2\cdots i_k)=Q(i_2\cdots i_k)$ and it suffices to show that the same three rules given in lemma \ref{lem:reflexions-recurrence} hold when passing from $Q(i_2\cdots i_k)$ to $Q(i_1\cdots i_k)$. If $s_{i_1}$ commutes with any reflection in $Q(i_2\cdots i_k)$, then the dense set at the top of $b_w$ is $Q(i_2\cdots i_k)\cup\{s_{i_1}\}$. If $s_{i_1}$ commutes with exactly one reflection $t$ in $Q(i_2\cdots i_k)$ then $t$ will become a line from the top to the bottom of the diagram associated to $b_w$ when collapsing the diagrams for $b_{i_1}$ and $b_{s_{i_1}w}$ and hence $t$ disappears from $Q(i_2\cdots i_k)$, $s_{i_1}$ is added and all other reflections become unchanged, hence $Q(i_1\cdots i_k)=(Q(i_2\cdots i_k)\backslash t)\cup \{s_{i_1}\}$. If $s_{i_1}$ commutes with two distinct reflections $(j_1, i_1),(i_1+1, j_2)\in Q(i_2\cdots i_k)$ with $j_1<i_1$, $i_1+1<j_2$, one sees by drawing the situation that when concatenating the diagram associated to $b_{i_1}$ to the one associated to $b_{s_{i_2}\cdots s_{i_k}}$, no line from the top to the bottom of the diagram corresponding to $b_w$ is added, that the simple reflection $s_{i_1}$ which lies at the bottom of the diagram corresponding to $b_{i_1}$ will joint the index $i_1$ to the index $i_1+1$, removing the above two reflections $(j_1, i_1),(i_1+1, j_2)$ to replace them by $(j_1, j_2)$, that of course the simple reflection $s_{i_1}$ coming from the top of the diagram of $b_{i_1}$ is added and that all other reflections in $Q(i_2\cdots i_k)$ stay unchanged, hence $$Q(i_1\cdots i_k)=\big(Q(i_2\cdots i_k)\backslash\{(j_1, i_1),(i_1+1,j_2)\}\big)\cup\{s_{i_1}, (j_1, j_2)\}.$$
We deduce from lemma \ref{lem:reflexions-recurrence} that $T(i_1\cdots i_k)=Q(i_1\cdots i_k)$.   
\end{proof}
\begin{cor}\label{cor:noniso}
The bimodules $B_w$ for $w\in\mathcal{W}_c$ are pairwise non-isomorphic in $\bar{R}-\mathrm{mod}-\bar{R}$ (hence in $\bar{R}-\mathrm{mod}_{\mathbb{Z}-}\bar{R}$).
\end{cor}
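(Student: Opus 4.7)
The plan is to extract combinatorial invariants of $B_w$ that are preserved under bimodule isomorphism and then invoke Proposition \ref{prop:digne} to identify these with the top and bottom arcs of the planar diagram of $b_w$. Any bimodule isomorphism between $B_w$ and $B_{w'}$ sends the left (resp. right) annihilator in $\bar{R}$ of one to that of the other, so the left and right annihilator ideals of $B_w$ are isomorphism invariants.

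First, I would use Theorem \ref{thm:asso} together with Lemma \ref{lem:annulateurlibre} to identify these annihilators explicitly: if $s_{i_1}\cdots s_{i_k}$ is a reduced expression for $w$, then $B_w$ is free as a left $\mathcal{O}(W_{i_1\cdots i_k})$-module and free as a right $\mathcal{O}(W_{i_k\cdots i_1})$-module, so its left annihilator in $\bar{R}$ is $I(W_{i_1\cdots i_k})$ and its right annihilator is $I(W_{i_k\cdots i_1})$. By Proposition \ref{prop:reflexion}, the variety $W_{i_1\cdots i_k}$ is uniquely determined by (and in turn determines) the dense set $T(i_1\cdots i_k)$ of pairwise commuting reflections, and likewise on the right. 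So $B_w\cong B_{w'}$ forces $T(i_1\cdots i_k)=T(i_1'\cdots i_{k'}')$ and $T(i_k\cdots i_1)=T(i_{k'}'\cdots i_1')$.

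Next, I would invoke Proposition \ref{prop:digne} to translate these equalities into the planar diagram picture: they become the equalities $Q(i_1\cdots i_k)=Q(i_1'\cdots i_{k'}')$ and $Q(i_k\cdots i_1)=Q(i_{k'}'\cdots i_1')$ of the sets of top and bottom arcs, respectively, of the planar diagrams representing $b_w$ and $b_{w'}$. The final step is a purely combinatorial observation about planar diagrams: once one fixes the top matching and the bottom matching of a Temperley-Lieb diagram, the through-strands are forced, because the free (unmatched) points at top and bottom must be in equal number and any non-crossing matching between two ordered sets of points on parallel lines is unique (the $j$-th free point at top connects to the $j$-th free point at bottom). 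Hence the planar diagrams of $b_w$ and $b_{w'}$ coincide, and since $w\mapsto b_w$ is a bijection from $\mathcal{W}_c$ onto the standard planar basis of $\mathrm{TL}_n$, we conclude $w=w'$.

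The potentially delicate step is the passage from the equality of annihilators to the equality of dense sets; this requires the unicity part of Proposition \ref{prop:reflexion}, which is the only non-obvious ingredient. Everything else is either an appeal to the freeness statement in Theorem \ref{thm:asso} (already proved) or the elementary planar observation that a non-crossing matching between two ordered, equinumerous finite sets on two parallel lines is unique. Note finally that since we obtain the non-isomorphism already as ungraded $\bar{R}$-bimodules, the non-isomorphism in the graded category $\bar{R}-\mathrm{mod}_\mathbb{Z}-\bar{R}$ follows a fortiori, justifying the parenthetical remark in the statement.
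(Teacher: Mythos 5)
Your argument is essentially the same as the paper's, just run in the direct rather than the contrapositive direction: the paper shows that $w\neq w'$ forces the pairs $(Q(i_1\cdots i_k),Q(i_k\cdots i_1))$ to differ, hence (via Proposition \ref{prop:digne}) the annihilators to differ, while you show that an isomorphism $B_w\cong B_{w'}$ forces equal annihilators, hence equal dense sets, hence equal diagrams, hence $w=w'$. You make the appeal to Theorem \ref{thm:asso}, Lemma \ref{lem:annulateurlibre} and the unicity in Proposition \ref{prop:reflexion} explicit, and you spell out the uniqueness of the non-crossing matching of through-strands; the paper leaves these implicit, but the ingredients are identical.
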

\begin{proof}
If $w\in\mathcal{W}_c$ with $s_{i_1}\cdots s_{i_k}$ a reduced expression, then the planar diagram corresponding to the element $b_w\in\mathrm{TL}_n$ is entirely determined by the two dense sets obtained by removing the lines going from the top to the bottom of the diagram, that is the pair $(Q(i_1\cdots i_k), Q(i_k\cdots i_1))$, since the lines in the diagram must be noncrossing. Hence two distinct fully commutative elements $w, w'\in\mathcal{W}$ will have distincts such pairs. Using proposition \ref{prop:digne}, the corresponding fully commutative bimodules $B_w$ and $B_{w'}$ will then have distinct left annihilators or distinct right annihilators, hence will be non-isomorphic as $(\bar{R}, \bar{R})$-bimodules.
\end{proof}
\subsection{Indecomposability of fully commutative bimodules}

The next step is to prove indecomposability of $\ast$-products of $B_i$ bimodules corresponding to elements of the Kazhdan-Lusztig basis of the Temperley-Lieb algebra, that is, fully commutative bimodules $B_w$. Any element $b_w\in\mathrm{TL}_n$ with $w\in\mathcal{W}_c$ can be written as a product
$$(b_{{i_k}} b_{{i_k-1}}\cdots b_{{j_k}})(b_{{i_{k-1}}} b_{{i_{k-1}-1}}\cdots b_{{j_{k-1}}})\cdots (b_{{i_1}} b_{{i_1-1}}\cdots b_{{j_1}})$$ with all indices in $\{1, \dots, n\}$ and $i_k<i_{k-1}<\dots <i_1$, $j_{k}<j_{k-1}<\dots <j_1$ and $j_m\leq i_m$ for each $m=1,\dots,k$ (see \cite{KT}, §5.7; we have reversed the indices $1, \dots, k$ since it will be more convenient for the inductions we will use later). 

Since the bimodules $B_i$ satisfy the Temperley-Lieb relations any fully commutative bimodule can written in the form
$$(B_{{i_k}} B_{i_k-1}\cdots B_{{j_k}})(B_{{i_{k-1}}} B_{{i_{k-1}-1}}\cdots B_{{j_{k-1}}})\cdots (B_{{i_1}} B_{{i_1-1}}\cdots B_{{j_1}}).$$

\begin{defn} We say that such a fully commutative bimodule is \textbf{associated} to the corresponding sequence $$i_k\cdots j_k i_{k-1}\cdots j_{k-1}\cdots i_1\cdots j_1.$$ The integer $k$ is the \textbf{rank} of the sequence. A fully commutative bimodule is \textbf{intertwined} if for each $1< m\leq k$, the set $[i_m, j_m]$ contains both the indices $i_1-2(m-1)$ and $i_1-2(m-1)+1$. 
\end{defn}
\begin{exple}
In case $n\geq 9$, the bimodule associated to the sequence 
$$(1) (432) (654) (7) (98)$$
is not intertwined. Bimodules associated to the sequences
$$(3\red21\black) (\red43\black) (7\red65\black4) (\red87\black6) (9),~(\red21\black) (\red43\black) (\red65\black) (\red87\black) (98),~(543\red21\black) (65\red43\black) (7\red65\black) (\red87\black) (9)$$\noindent are intertwined ; here $i_1=9$ and the indices of the form $i_1-2(\ell-1)$ and $i_1-2(\ell-1)+1$ from the definition are drawn in red. As an exercise the reader can compute the dense sets of reflections characterizing the varieties of the left and right annihilators. 
\end{exple} 
%Any bimodule associated to a non fully commutative element is decomposable. The purpose is to show that any fully commutative bimodule is indecomposable and for this we will use the above form. we already know that the $\ast$ product is associative when restricted to products of $B_i$ and the idea will be to find a good choice of brackets to write our bimodule as a tensor product where $1\otimes\cdots \otimes 1$ generates the whole bimodule. Indecomposability follows since then the degree zero component of our bimodule has dimension one.

\begin{lem}\label{lem:indec1}
Let $B$ be a fully commutative bimodule. If $B$ is associated to a sequence of rank $1$, then $B$ is indecomposable (as graded bimodule).
\end{lem}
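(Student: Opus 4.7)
The plan is to establish indecomposability by computing that the graded endomorphism ring of $B$, meaning $(\bar R,\bar R)$-bimodule endomorphisms homogeneous of degree zero, reduces to the base field $k$, which is local and hence has only the trivial idempotents $0$ and $1$. The starting point is to unwind the $\ast$-product: a rank-$1$ sequence is of the form $i_1(i_1-1)\cdots j_1$ with $j_1\le i_1$, and by corollary~\ref{cor:nachbar} together with theorem~\ref{thm:asso}, the bimodule $B=B_{i_1}\ast B_{i_1-1}\ast\cdots\ast B_{j_1}$ is isomorphic, as a graded $(\bar R,\bar R)$-bimodule, to the iterated tensor product
$$R_{i_1}\otimes_{R_{i_1}^{s_{i_1}}}R_{i_1,i_1-1}\otimes_{R_{i_1-1}^{s_{i_1-1}}}R_{i_1-1,i_1-2}\otimes\cdots\otimes_{R_{j_1+1}^{s_{j_1+1}}}R_{j_1+1,j_1}\otimes_{R_{j_1}^{s_{j_1}}}R_{j_1}.$$

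Two observations about this model drive the argument. First, every factor that appears, both the tensor slots and the rings over which the tensor is taken, is a positively graded $k$-algebra with degree-zero part equal to $k$, so a short induction shows that $B_0=k\cdot e$, where $e=1\otimes 1\otimes\cdots\otimes 1$ is the pure tensor. Second, $e$ generates $B$ as an $(\bar R,\bar R)$-bimodule: since a rank-$1$ sequence only involves pairs of consecutive indices, lemma~\ref{lem:free} applies to each intermediate factor $R_{j+1,j}$ and identifies it with $R_{j+1}^{s_{j+1}}$ as a free module of rank one, generated by $1$, via the map $\tilde a\mapsto \tilde a\cdot 1$. Consequently, every middle entry $a_\ell$ of a simple tensor lifts to an adjacent invariant subring and can be pushed across the corresponding tensor balance; iterating this transport rewrites an arbitrary simple tensor $a_0\otimes a_1\otimes\cdots\otimes a_m$ as $a'\otimes 1\otimes\cdots\otimes 1\otimes a_m$, which after further lifting of $a'$ and $a_m$ through the canonical surjections $\bar R\twoheadrightarrow R_{i_1}$ and $\bar R\twoheadrightarrow R_{j_1}$ is displayed as an element of $\bar R\cdot e\cdot\bar R$.

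These two facts together close the argument: a graded endomorphism $\varphi$ of degree zero sends $e$ to some $\lambda e\in B_0=k\cdot e$, and by cyclicity $\varphi(\bar a\cdot e\cdot\bar b)=\lambda\cdot\bar a\cdot e\cdot\bar b$ on the whole of $B$, so $\varphi=\lambda\cdot\mathrm{id}_B$. Hence the graded endomorphism ring is $k$, and $B$ is indecomposable as a graded bimodule. The main obstacle I anticipate is the bookkeeping in the cyclicity step: when a lifted element is pushed from position $\ell$ down to position $\ell-1$, it first restricts along a surjection $R_{i_1-\ell+1}^{s_{i_1-\ell+1}}\twoheadrightarrow R_{i_1-\ell+2,i_1-\ell+1}$ and then recombines with the entry sitting at position $\ell-1$, and one has to check that the new element at position $\ell-1$ again lifts to the appropriate invariant subring so that the process may continue. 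This is precisely where the rank-$1$ hypothesis is used: only adjacent pairs $(j+1,j)$ ever appear, so lemma~\ref{lem:free} is always available; beyond rank one, intermediate factors $R_{i,j}$ with $|i-j|>1$ enter the picture and the analogue of lemma~\ref{lem:free} (proposition~\ref{prop:distant}) no longer provides a rank-one free description, which is presumably why the higher-rank case in the sequel demands a genuinely more elaborate argument.
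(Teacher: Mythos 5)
Your argument is correct and follows essentially the same route as the paper's: both unwind $B$ to the iterated tensor product $R_{i_1}\otimes_{R_{i_1}^{s_{i_1}}}R_{i_1,i_1-1}\otimes\cdots\otimes_{R_{j_1}^{s_{j_1}}}R_{j_1}$, use the rank-one freeness of each $R_{m,m-1}$ over its neighboring invariant rings (lemma~\ref{lem:free}, or equivalently remark~\ref{rmq:noninvariant}) to push every middle entry out to the ends, and conclude that $B$ is generated as a bimodule by $e=1\otimes\cdots\otimes1$ with $\dim_k B_0=1$. The only difference is cosmetic: you package the conclusion via $\mathrm{End}(B)\cong k$ being local, whereas the paper argues directly that cyclicity on a generator spanning the one-dimensional degree-zero component rules out any nontrivial direct summand.
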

\begin{proof}
We write $i(i-1)\cdots j$ for the sequence associated to our bimodule, $i-j\geq 0$. One has
$$B=B_{i}\ast B_{i-1}\ast\cdots\ast B_{j}.$$
One has $W_{m(m-1)\cdots j}=V_m$ and $W_{m(m +1)\cdots i}=V_{m}$ for each $m\in[j,i]$ thanks to lemma \ref{lem:suitecroissante}. As a consequence with any choice of brackets for computing the above product one gets that $B$ is isomorphic to 
$$B_i\otimes_{R_i} R_{i, i-1}\otimes_{R_{i-1}}B_{i-1}\otimes_{R_{i-1}} R_{i-1, i-2}\otimes_{R_{i-2}} B_{i-2} \otimes\cdots\otimes R_{j+1,j}\otimes_{R_j} B_j,$$
\noindent with $R_{m, m-1}=\mathcal{O}(V_m\cap V_{m-1})$ for each $m\in[j+1,i]$ ; if $i=j$ we get $B_i=B_j$. After reduction $B$ is isomorphic to 
$$R_{i}\otimes_i R_{i, i-1}\otimes_{i-1} R_{i-1, i-2}\otimes_{i-2}\cdots\otimes_{j+1} R_{j+1,j}\otimes_j R_j,$$
\noindent where $\otimes_m$ means $\otimes_{\bar{R}^{s_m}}$; if $i=j$ we get $R_i\otimes_i R_i$. Thanks to remark \ref{rmq:noninvariant} one then has $\bar{R}_m^{s_m}\twoheadrightarrow\mathcal{O}(V_m\cap V_{m-1})$ as well as $\bar{R}_{m-1}^{s_{m-1}}\twoheadrightarrow\mathcal{O}(V_m\cap V_{m-1})$ for each $m\in[j+1,i]$. Hence any tensor 
$$a_i\otimes_i a_{i, i-1}\otimes_{i-1}\cdots \otimes_{j+1} a_{j+1, j}\otimes_j a_j\in B$$ is equal to a tensor
$$a\otimes_i 1\otimes_{i-1}\cdots \otimes_{j+1} 1\otimes_j a'\in B.$$ As a consequence $B$ is generated as $(\bar{R},\bar{R})$-bimodule by the degree zero element $1\otimes_i 1\otimes_{i-1}\cdots\otimes_{j+1} 1\otimes_j 1$ which forces indecomposability since the zero degree component of $B$ has dimension $1$. 
\end{proof}
\begin{lem}\label{lem:decomposer}
Consider the bimodule $B$ from the proof of lemma \ref{lem:indec1} written in the form $$R_{i}\otimes_i R_{i, i-1}\otimes_{i-1} R_{i-1, i-2}\otimes_{i-2}\cdots\otimes_{j+1} R_{j+1,j}\otimes_j R_j.$$ Any tensor $a\otimes_i 1\otimes_{i-1}\cdots \otimes_{j+1} 1\otimes_j a'\in B$ where $a\in R_i$, $a'\in R_j$ can be written in the form $$(b\otimes_i 1\otimes_{i-1}\cdots \otimes_{j+1} 1\otimes_j 1)+(b'\otimes_i 1\otimes_{i-1}\cdots \otimes_{j+1} 1\otimes_j f_j),$$ where $b, b'\in R_i$.
\end{lem}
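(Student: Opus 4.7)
The natural strategy is to decompose $a'$ using the $s_j$-eigenspace splitting and then migrate the $s_j$-invariant parts all the way to the leftmost tensor factor, one tensor slot at a time. Write $a' = r + r' f_j$ with $r, r' \in R_j^{s_j}$ (using $R_j = R_j^{s_j} \oplus R_j^{s_j} f_j$). By bilinearity, the given tensor splits as
\begin{equation*}
a \otimes_i 1 \otimes_{i-1} \cdots \otimes_j r \;+\; a \otimes_i 1 \otimes_{i-1} \cdots \otimes_j r' f_j.
\end{equation*}
It suffices to push $r$ (resp.\ $r'$) all the way left through every $\otimes_m$ so that in the end it multiplies $a$ in $R_i$.

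The push is performed inductively on $m$, going from $m=j$ up to $m=i$. Suppose we have already arrived at a tensor of the form
\begin{equation*}
a \otimes_i 1 \otimes_{i-1} \cdots \otimes_{m+1} \rho_m \otimes_m 1 \otimes_{m-1} \cdots \otimes_j \epsilon, \qquad \epsilon \in \{1, f_j\},
\end{equation*}
with $\rho_m \in R_m^{s_m}$ (the base case $m = j$ with $\rho_j = r$ or $\rho_j = r'$ is clear). Since $\rho_m$ is $s_m$-invariant, we may slide it across $\otimes_m$ so that it acts on the right of the slot $R_{m+1,m} = \mathcal{O}(V_{m+1} \cap V_m)$ (or $R_i$ when $m+1 = i$), via its restriction $\rho_m|_{V_{m+1} \cap V_m}$. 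The crucial step is then to lift this restriction to $R_{m+1}^{s_{m+1}}$. By Corollary~\ref{cor:ni}, $V_{m+1} \cap V_m$ is $s_{m+1}$-invariant iff $V_m$ is, and $T_{V_m} = \{s_m\}$ does not contain $s_{m+1}$; hence $V_{m+1} \cap V_m$ is not $s_{m+1}$-invariant, and Remark~\ref{rmq:noninvariant} provides a surjection $R_{m+1}^{s_{m+1}} \twoheadrightarrow \mathcal{O}(V_{m+1} \cap V_m)$. Choose any lift $\rho_{m+1} \in R_{m+1}^{s_{m+1}}$ of $\rho_m|_{V_{m+1} \cap V_m}$; the resulting tensor is indistinguishable from the one with $\rho_{m+1}$ in place of $\rho_m|_{V_{m+1}\cap V_m}$, and after sliding the $1$'s back to their original slots we have moved to the next inductive stage.

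After $i-j+1$ such steps the invariant element reaches $R_i$ from the right, where it multiplies $a$ inside $R_i$. Applying this to both summands produces $b := a \cdot \hat{r}$ and $b' := a \cdot \hat{r}'$ in $R_i$ (where $\hat{r}, \hat{r}' \in R_i^{s_i}$ are the final lifts of $r, r'$), and yields the desired expression
\begin{equation*}
(b \otimes_i 1 \otimes \cdots \otimes_j 1) + (b' \otimes_i 1 \otimes \cdots \otimes_j f_j).
\end{equation*}
The only non-routine point is the successive availability of the liftings, which is precisely what the non-$s_{m+1}$-invariance of $V_{m+1}\cap V_m$ secures via Remark~\ref{rmq:noninvariant}; everything else is bookkeeping across tensor factors.
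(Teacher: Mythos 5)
Your proof is correct and follows essentially the same approach as the paper's very terse argument: decompose $a'=r+r'f_j$ with $r,r'\in R_j^{s_j}$ and push the invariant parts leftward slot by slot, using Corollary~\ref{cor:ni} to see that $V_{m+1}\cap V_m$ is not $s_{m+1}$-invariant and then Remark~\ref{rmq:noninvariant} to produce the surjection $R_{m+1}^{s_{m+1}}\twoheadrightarrow\mathcal{O}(V_{m+1}\cap V_m)$ giving an $s_{m+1}$-invariant lift that can be slid across $\otimes_{m+1}$. There is a small bookkeeping slip in your inductive template: the displayed tensor places $\rho_m$ between $\otimes_{m+1}$ and $\otimes_m$, i.e., in the slot $\mathcal{O}(V_{m+1}\cap V_m)$, yet you assert $\rho_m\in R_m^{s_m}$, and the stated base case $\rho_j=r\in R_j$ sits one slot to the right of that picture --- the intended inductive invariant should be phrased as ``the element currently occupying the slot is the restriction of some element of $R_m^{s_m}$,'' after which the argument runs exactly as you describe.
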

\begin{proof}
It suffices to decompose $a'=r+r'f_j$ with $r, r'\in R_j^{s_j}$ and move $r, r'$ to the left using the fact that $\bar{R}_m^{s_m}\twoheadrightarrow\mathcal{O}(V_m\cap V_{m-1})$ as well as $\bar{R}_{m-1}^{s_{m-1}}\twoheadrightarrow\mathcal{O}(V_m\cap V_{m-1})$ for each $m\in[j+1,i]$.
\end{proof} 
\begin{nota}
Let $i_k\cdots j_k\cdots i_1\cdots j_1$ be a sequence defining a fully commutative bimodule $B$. For each $m\in [1, k]$, we write $B(m)$ for the bimodule associated to the subsequence $i_m\cdots j_m$. In particular we have
$$B\cong B(k)\ast B(k-1)\ast\cdots\ast B(1).$$ 
\end{nota}
\begin{prop}\label{prop:entrelace}
Let $B$ be an intertwined bimodule associated to the sequence 
$\mathrm{seq}=i_k\cdots j_k i_{k-1} \cdots j_{k-1} \cdots i_1\cdots j_1$. 
\begin{enumerate}
\item One has the equality $\mathrm{supp}(T(\mathrm{seq}))=[i_1-2(k-1), i_1+1]$. Moreover, the set $T(\mathrm{seq})$ contains the reflection $(i_1-2(k-1), i_1+1)$ (in other words, it has a single block). 
\item The bimodule $B$ is indecomposable.
\end{enumerate}
\end{prop}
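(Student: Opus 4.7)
\textbf{Proof plan for Proposition \ref{prop:entrelace}.} The plan is to prove both statements simultaneously by induction on the rank $k$.

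\textbf{Base case $k=1$.} The sequence is $i_1(i_1-1)\cdots j_1$, and Lemma \ref{lem:suitecroissante} gives $W_{\mathrm{seq}}=V_{i_1}$, so $T(\mathrm{seq})=\{s_{i_1}\}=\{(i_1,i_1+1)\}$. The support is $[i_1,i_1+1]=[i_1-2(1-1),i_1+1]$ and the unique reflection is $(i_1-2(1-1),i_1+1)$, establishing (1). Statement (2) in this case is Lemma \ref{lem:indec1}.

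\textbf{Inductive step for (1).} Write $\mathrm{seq}=(i_k\cdots j_k)\,\mathrm{seq}'$, where $\mathrm{seq}'$ corresponds to the rank $k-1$ intertwined bimodule $B^{<k}=B(k-1)\ast\cdots\ast B(1)$. Set $a=i_1-2(k-1)$. By induction, $\mathrm{supp}\,T(\mathrm{seq}')=[a+2,i_1+1]$ and $(a+2,i_1+1)\in T(\mathrm{seq}')$; the intertwining condition gives $j_k\leq a<a+1\leq i_k$. I would prepend the operations $s_{j_k},s_{j_k+1},\dots,s_{i_k}$ in this order (right-to-left in the sequence) and apply Lemma \ref{lem:reflexions-recurrence} at each step. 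For $s_{j_k},\dots,s_a$, the supports lie in $[j_k,a+1]$, disjoint from $[a+2,i_1+1]$; Rule 1 then Rule 2 iteratively yield $T(\mathrm{seq}')\cup\{s_a\}$. The decisive step is $s_{a+1}$: its support $\{a+1,a+2\}$ fails to commute with both $s_a$ and $(a+2,i_1+1)$ (and with nothing else in $T(\mathrm{seq}')\cup\{s_a\}$, because no other reflection of the commuting set $T(\mathrm{seq}')$ can share the endpoint $a+2$ with $(a+2,i_1+1)$). Rule 3 then replaces these two reflections by $s_{a+1}$ and by the conjugate $s_a\cdot(a+2,i_1+1)\cdot s_{a+1}\cdot(a+2,i_1+1)\cdot s_a=(a,i_1+1)$, and the support becomes $[a,i_1+1]$. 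For the remaining indices $a+2,\dots,i_k$, each $s_m$ has support $\{m,m+1\}\subset[a+2,i_1]$ and hence commutes with the new outer arc $(a,i_1+1)$; a direct check via Lemma \ref{lem:reflexions-recurrence} shows these operations only rearrange the inner reflections (inherited from $T(\mathrm{seq}')$ together with $s_{a+1}$) without affecting $(a,i_1+1)$ or the total support $[a,i_1+1]$. This proves (1).

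\textbf{Inductive step for (2).} By induction, $B^{<k}$ is indecomposable, and by tracing the argument of Lemma \ref{lem:indec1} and of Lemma \ref{lem:decomposer} through the $\ast$-products, $B^{<k}$ is generated as a graded $(\bar{R},\bar{R})$-bimodule by its unique (up to scalar) degree-zero element $1^{\otimes}$, with one-dimensional degree-zero component. The same holds for $B(k)$, which is rank-$1$ intertwined. Using Lemma \ref{lem:annulateurlibre}, rewrite $B=B(k)\ast B^{<k}$ as a tensor product over $\mathcal{O}(V_{j_k}\cap W')$, where $W'$ is the left-annihilator variety of $B^{<k}$. The plan is to repeatedly invoke Remark \ref{rmq:noninvariant}: at each $\bar{R}^{s_m}$-junction inside the resulting tensor, the relevant intersection of varieties either fails to be $s_m$-invariant (allowing one to choose $s_m$-invariant preimages and push scalars across), or is $s_m$-invariant (in which case the decomposition $\mathcal{O}(X)=\mathcal{O}(X)^{s_m}\oplus\mathcal{O}(X)^{s_m}f_m$ applies). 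Iterating this normalization, any tensor in $B$ reduces to $b\otimes 1\otimes\cdots\otimes 1\otimes b'$ with $b,b'\in\bar{R}$, so $B$ is generated as a bimodule by $1^{\otimes}$. Since the degree-zero component remains one-dimensional, any direct-summand decomposition must place the generator entirely in one summand, proving indecomposability.

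\textbf{Main obstacle.} The delicate point is tracing the operations $s_{a+2},\dots,s_{i_k}$ in part (1): they may trigger further Rule 3 events inside the arc $(a,i_1+1)$, and it is not immediately clear that (a) the arc $(a,i_1+1)$ is preserved by every such event and (b) the support does not shrink. The intertwining condition bounds how far into $[a+2,i_1]$ the block $(i_k\cdots j_k)$ penetrates and is precisely what prevents these undesired outcomes. For part (2), the analogous obstacle is showing the single-generator property at the $\bar{R}^{s_{j_k}}$-junction gluing $B(k)$ to $B^{<k}$, which requires the surjection statement of Remark \ref{rmq:noninvariant} to apply on both sides of the intersection.
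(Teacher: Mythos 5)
Your treatment of part (1) matches the paper closely: you split the prepended block $i_k\cdots j_k$ at the index $a+1=i_1-2(k-1)+1$, use Lemma \ref{lem:reflexions-recurrence} to build $T(\mathrm{seq}')\cup\{s_a\}$, identify the pivotal Rule-3 event at $s_{a+1}$ that produces $s_{a+1}$ and the long arc $(a,i_1+1)$, and observe that all later $s_m$ with $m\le i_k<i_1$ commute with $(a,i_1+1)$. Your flagged ``main obstacle'' for (1) is real but resolvable exactly as you suspect: since $(a,i_1+1)$ is never touched by any $s_m$ with $m\in[a+2,i_k]$, the block structure (preserved by Theorem \ref{thm:classification}) forces the support to stay $[a,i_1+1]$. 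The paper asserts this in one line, as you effectively do.

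The inductive step for part (2), however, has a genuine gap, and your own ``main obstacle'' paragraph misidentifies its nature. You propose to iterate two local moves: (i) at a non-$s_m$-invariant junction, use Remark \ref{rmq:noninvariant} to choose invariant preimages and push across; (ii) at an $s_m$-invariant junction, apply the eigen-decomposition $\mathcal{O}(X)=\mathcal{O}(X)^{s_m}\oplus\mathcal{O}(X)^{s_m}f_m$. But for an intertwined bimodule, the crucial junction varieties $W_\ell=X_\ell\cap V_{j_\ell}$ \emph{are} $s_{j_\ell}$-invariant (precisely because $j_\ell\le i_1-2(\ell-1)$ keeps $s_{j_\ell}$ away from the support of $Q_\ell$ by at least $2$), so the non-invariance shortcut you hope for in your last sentence (``requires the surjection statement of Remark \ref{rmq:noninvariant} to apply on both sides'') is unavailable. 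Case (ii) then produces an $f_{j_\ell}$-component that does not vanish and cannot be absorbed leftward; it must instead be transported \emph{rightward} through the block $B(\ell-1)$ and into $\mathcal{O}(W_{\ell-1})$. This transport is the entire content of the paper's Lemma \ref{lem:tech}, which shows that when $f_m$ (with $m\le i_1-2(\ell-1)$) is pushed across $B(\ell-1)$ it lands as a sum of $f_j$'s with the uniform bound $j\le i_1-2(\ell-2)$, so the process can be repeated at the next junction. That index bookkeeping — which is exactly where the intertwining hypothesis enters — is missing from your sketch, and without it the claim ``any tensor reduces to $b\otimes 1\otimes\cdots\otimes 1\otimes b'$'' does not follow from the local moves you listed. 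Also note that a naive induction on $k$ does not quite work for (2): the property you must propagate is not bare indecomposability of $B^{<k}$ but the stronger single-generator normal form, which the paper establishes by direct computation over all junctions simultaneously rather than by an induction step.
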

\begin{proof}
%The first claim is easily proven by induction on $k$. If $k=2$, then the sequence associated to $B$ is of the form $(i-1)\cdots (i-m) i\cdots (i-m')$ with $m>1$ (since the bimodule is intertwined) and $m\geq m'$. An easy computation shows that 

%$$W_{i(i-1)\cdots (i-m) (i+1)\cdots (i-m')}=V_{i-1}\cap V_{s_{i-2} s_{i-1} s_{i} s_{i-1} s_{i-2}}.$$

%Hence $\mathrm{supp}(T_{W_{i(i-1)\cdots (i-m) (i+1)\cdots (i-m')}})=\{s_{i-1}, (i-2, i+1)\}$ which shows the claim in that case. 
The first claim is easily shown by induction on $k$. If $k=1$, one has $\mathrm{seq}=i_1\cdots j_1$ and $T(\mathrm{seq})=\{s_{i_1}\}$ (see lemma \ref{lem:suitecroissante}) whose support is $\{i_1, i_1+1\}$. 

Now suppose the result holds for any sequence of rank at most $k-1$ and consider the case where the sequence has rank $k$. If $W=W_{i_{k-1}\cdots j_1}$, then by induction $\mathrm{supp}(T_W)=[i_1-2(k-2), i_1+1]$ and $T_W$ contains the reflection $(i_1-2(k-2), i_1+1)$. Now consider the subsequence $i_k\cdots j_k$ of $\mathrm{seq}$, which is equal to the concatenation of the decreasing sequences $\mathrm{seq_1}=i_k\cdots (i_1-2(k-1)+1)$ and $\mathrm{seq_2}=(i_1-2(k-1))\cdots j_k$ (since the bimodule is intertwined). Any reflection $s_j$ with $j$ in $\mathrm{seq}_2$ commutes with any reflection in $T_W$ hence one gets using lemma \ref{lem:reflexions-recurrence} that $T_{\mathrm{seq_2}\cdot W}=T_W\cup\{s_{i_1-2(k-1)}\}$. We now study the effect of applying $\mathrm{seq_1}$ to $\mathrm{seq_2}\cdot W$. Using again lemma \ref{lem:reflexions-recurrence}, applying the first index on the right of $\mathrm{seq}_1$, that is $(i_1-2(k-1)+1)$, replaces the reflexions $s_{i_1-2(k-1)}$ and $(i_1-2(k-2), i_1+1)$ in $T_W\cup\{s_{i_1-2(k-1)}\}$ by $s_{i_1-2(k-1)+1}$ and $(i_1-2(k-1), i_1+1)$ and applying the following indices only removes and adds reflexions supported in $[i_1-2(k-1)+1, i_1]$, showing that $T_{\mathrm{seq}\cdot W}$ has support equal to $[i_1-2(k-1), i_1+1]$ and contains the reflection $(i_1-2(k-1), i_1+1)$. 

To show indecomposability of $B$, we first compute the $\ast$-product occuring in the bimodules $B(m)$ associated to each decreasing subsequence $\mathrm{seq}_m=i_m\cdots j_m$ of our sequence. These ones occur to be indecomposable thanks to lemma \ref{lem:indec1} and we will write them as in the proof of this lemma in the form 
$$R_{i_m}\otimes_{i_m} R_{i_m, i_m-1}\otimes_{i_m-1} R_{i_m-1, i_m-2}\otimes\cdots\otimes R_{j_m+1,j_m}\otimes_{j_m} R_{j_m}.$$
We will abuse notation and write $B(m)$ for the above isomorphic bimodule. It remains to make a choice of brackets for computing the product $B(k)\ast B(k-1)\ast\cdots\ast B(2)\ast B(1)$. We will compute the product "from the right", i.e.,
$$B(k)\ast (B(k-1)\ast(\cdots\ast (B(3)\ast (B(2)\ast B(1)))\cdots)).$$
Thanks to theorem \ref{thm:asso} together with the first part of the proposition, one has that for $\ell\leq k$, the left annihilator of the intertwined bimodule	$$B(\ell-1)\ast (B(\ell-2)\ast(\cdots\ast (B(3)\ast (B(2)\ast B(1)))\cdots))$$ is equal to the ideal of functions vanishing on $\bigcap_{t\in Q_\ell} V_t$ where $Q_\ell\subset T$ is a dense subset satisfying $\mathrm{supp}(Q_\ell)=[i_1-2(\ell-2), i_1+1]$ and containing the reflection $(i_1-2(\ell-2), i_1+1)$. The right annihilator of $B(\ell)$ is equal to $I(V_{j_{\ell}})$. Since the bimodule $B$ is intertwined one has that $j_{\ell}\leq i_1-2(\ell-1)=i_1-2(\ell-2)-2$ and in particular, $s_{j_{\ell}}$ commutes with any reflection in $Q_\ell$. Set $X_{\ell}=\bigcap_{t\in Q_\ell} V_t$, $W_{\ell}:=V_{s_{j_{\ell}}}\cap X_{\ell}$ for $\ell>1$ and $W_1=V_{j_1}$. One has that $W_\ell$ is $s_{j_{\ell}}$-invariant and hence we can decompose 
\begin{equation}\label{eq:dec}
\mathcal{O}(W_{\ell})=\mathcal{O}(W_{\ell})^{s_{j_{\ell}} }\oplus \mathcal{O}(W_{\ell})^{s_{j_{\ell}} } f_{j_{\ell}}|_{W_{\ell}}.
\end{equation}
We will abuse notation and write $f_i$ instead of $f_i|_X$ for the image of $f_i$ in $\mathcal{O}(X)$ where $X\subset Z$ is an algebraic set to avoid using two much indices and since this will make no possible confusion in the next computations. Computing recursively our product with the above choice of brackets we get that our bimodule $B$ is isomorphic to 
$$B(k)\otimes_{R_{j_k}} \mathcal{O}(W_{k})\otimes_{\mathcal{O}(X_k)} B(k-1)\otimes\cdots \otimes B(2)\otimes_{R_{j_2}}\mathcal{O}(W_2)\otimes_{R_{i_1}} B(1).$$
Again we abuse notation and write $B$ for this isomorphic bimodule. We have seen in the proof of lemma \ref{lem:indec1} that the bimodule $B(\ell)$ is indecomposable and generated by the element $1_\ell:=1\otimes_{i_\ell} 1\otimes_{i_{\ell}-1} 1\otimes\cdots\otimes_{j_{\ell}} 1\in B(\ell)$ for each $\ell$. Hence using lemma \ref{lem:decomposer} any tensor in the above tensor product can be written as a sum of two elements of the form
$$a\cdot 1_{k}\otimes_{R_{j_k}} a_k\otimes_{R_{j_{k-1}}} 1_{k-1} \otimes \cdots\otimes 1_2\otimes_{R_{j_2}} a_2\otimes_{R_{j_1}} 1_1\cdot a',$$ the first one with $a'=1$, $a\in\bar{R}$, $a_{\ell}\in \mathcal{O}(W_{\ell})$ and the second one having the same properties but with $a'=f_{j_1}$. Our strategy is the same as in lemma \ref{lem:indec1}: we will show that our bimodule can be generated by the element
$$1_k\otimes_{R_{j_k}} 1 \otimes_{R_{j_{k-1}}} 1_{k-1}\otimes\cdots\otimes 1_2\otimes_{R_{j_2}} 1\otimes_{R_{j_1}} 1_1.$$ In that case, because of the $s_{j_\ell}$-invariance of the variety $W_{\ell}$, we use relation \ref{eq:dec} to move the invariant parts of each $a_k$ to the left in the same way as at the end of the proof of lemma \ref{lem:indec1}: we begin with $a_2$, writing $a_2=r_2+r'_2 f_{j_2}$ where $r_2$ and $r'_2$ are $s_{j_2}$-invariant. But then one has that in $$\mathcal{O}(W_3)\otimes_{\mathcal{O}(X_3)} B(2)\otimes_{R_{j_2}} \mathcal{O}(W_2),$$ $a_3\otimes 1_2\otimes r_2= q\otimes 1_2\otimes 1$ and $a_3\otimes 1_2 \otimes r'_2 f_{j_2}=q'\otimes 1_2\otimes f_{j_2}$ with $q, q'\in \mathcal{O}(W_3)$. In other words a tensor in $B$ of the form
$$a\cdot 1_{k}\otimes_{R_{j_k}} a_k\otimes_{\mathcal{O}(X_k)} 1_{k-1} \otimes \cdots\otimes a_3\otimes_{\mathcal{O}(X_3)} 1_2\otimes_{R_{j_2}} a_2\otimes_{R_{i_1}} 1_1\cdot a'$$
\noindent is equal to a tensor of the form
$$a\cdot 1_{k}\otimes_{R_{j_k}} a_k\otimes_{\mathcal{O}(X_k)} 1_{k-1} \otimes \cdots \otimes(q\otimes_{\mathcal{O}(X_3)} 1_2\otimes_{R_{j_2}} 1+q'\otimes_{\mathcal{O}(X_3)} 1_2\otimes_{R_{j_2}} f_{j_2})\otimes_{R_{i_1}} 1_1\cdot a'.$$

Now one can decompose $q, q'$ and again "move" the $s_{j_3}$-invariant parts to the left, and so on. At the end of the process we get a sum of elements $\sum_i a_i\cdot t_i$ where $t_i$ are tensors in $B$ with $f_{\ell}$ or $1$ in the $\mathcal{O}(W_{\ell})$-component of $B$ and $1$ in any other component. It remains to show that each of these $t_i$ can be written as a sum of elements of the form $b\cdot 1\otimes 1\otimes\cdots\otimes 1\otimes 1\cdot b'$ with $b, b'\in \bar{R}$ to show that the arbitrary tensor in $B$ we began with can be obtained from the tensor $1\otimes 1\otimes\cdots 1\otimes 1\in B$. In fact we will show that we can write any of the $t_i$ as a single tensor of the form $b\cdot 1\otimes 1\otimes\cdots\otimes 1\otimes 1\cdot b'$ with $b=1$ (in other words, all the remaining $f_{\ell}$ in our tensors will be "moved" to the right) and $b'$ beeing equal to a polynomial in $f_i$ for $i\leq i_1$. For this we need the following technical lemma :
\begin{lem}\label{lem:tech}
Let $B(i)$, $W_i$, $X_i$ be as above for each $2\leq i\leq k$ and set $W_1=V_{j_1}$. Let $\ell\in[2,k]$ and suppose $m\leq i_1-2(\ell-1)$. Then the tensor $f_m\otimes 1_{\ell-1} \otimes 1$ in $\mathcal{O}(W_{\ell})\otimes_{\mathcal{O}(X_\ell)} B(\ell-1)\otimes_{R_{j_{\ell-1}}}\mathcal{O}(W_{\ell-1})$ is equal to a tensor of the form 
$1\otimes 1_{\ell-1}\otimes \sum_{j} f_j$ in the same tensor product with all indices $j\leq i_1-2(\ell-2)$. 
\end{lem}  
\begin{proof}
The first case is the case where $m<j_{\ell-1}-1$. In that case $f_m$ is invariant by any reflection $s_{m'}$ with $m'$ an index occuring in the sequence $i_{\ell-1}\cdots j_{\ell-1}$ and hence $f_m\otimes 1_{\ell-1}\otimes 1=1\otimes 1_{\ell-1}\otimes f_m$ since all the tensor products in $B(\ell-1)$ are over various $\bar{R}^{s_{m'}}$ for $m'$ occuring in the sequence $i_{\ell-1}\cdots j_{\ell-1}$. 

The second case is the case where $m=j_{\ell-1}-1 < i_1-2(\ell-2)-1$, then $m$ and $i_{\ell-1}$ are distant: since our bimodule is intertwined $i_{\ell-1}\cdots j_{\ell-1}$ has to contain the index $i_1-2(\ell-2)+1$ if $\ell>2$, which forces $j_{\ell-1}<i_{\ell-1}$ and if $\ell=2$, the condition $m=j_1-1$ forces $i_1>j_1$ since otherwise one would have $m=i_1-1$ contradicting our assumption that $m\leq i_1-2(\ell-1)$. Hence our tensor is equal to the tensor 
$$1\otimes_{\mathcal{O}(X_k)} 1\otimes_{i_{\ell-1}} 1\otimes \cdots f_m\otimes_{j_{\ell-1}} 1\otimes_{R_{j_{\ell-1}}} 1$$
with $f_m$ lying in $\mathcal{O}(V_{j_{\ell-1}+1}\cap V_{j_{\ell-1}})$. But in this ring we have $f_{j_{\ell-1}+1}+f_{j_{\ell-1}}=0$ since $V_{j_{\ell-1}+1}\cap V_{j_{\ell-1}}\subset H$ where $H$ is the reflecting hyperplane of $s_{j_{\ell-1}+1}s_{j_{\ell-1}}s_{j_{\ell-1}+1}$ (see lemma \ref{lem:hyperebene}), hence in $\mathcal{O}(V_{j_{\ell-1}+1}\cap V_{j_{\ell-1}})$ we get
$$f_m=f_{j_{\ell-1}-1}=f_{j_{\ell-1}+1}+f_{j_{\ell-1}}+f_{j_{\ell-1}-1},$$
which is $s_{j_{\ell-1}}$-invariant, hence the sum in the right hand side can be moved to the last component of the tensor product ; but this is a sum of $f_j$ for $j\leq j_{\ell-1}+1=m+2\leq i_1-2(\ell-2)$.
 
The last case is the case where $m\geq j_{\ell-1}$. This forces $m$ to occur as an index of the sequence $i_{\ell-1}\cdots j_{\ell-1}$ and $m+1$, $m+2$ also occur since the bimodule is intertwined and $m\leq i_1-2(\ell-1)$. In that case our tensor $f_m\otimes 1_{\ell-1}\otimes 1$ is equal to a tensor 
$$1\otimes_{\mathcal{O}(X_\ell)} 1\otimes_{i_{\ell-1}} 1\otimes \cdots\otimes_{m+2} f_m\otimes_{m+1}\otimes 1\otimes_{m} 1\otimes\cdots\otimes 1,$$
with $f_m$ lying in $\mathcal{O}(V_{m+1}\cap V_{m+2})$. In that ring one has $f_m=f_m+f_{m+1}+f_{m+2}$ which is $s_{m+1}$-invariant, hence the sum can be moved to the next factor which is $\mathcal{O}(V_{m}\cap V_{m+1})$. But in that ring, one has $f_m+f_{m+1}=0$, hence our tensor is equal to the tensor
$$1\otimes_{\mathcal{O}(X_\ell)} 1\otimes_{i_{\ell-1}} 1\otimes \cdots\otimes 1\otimes_{m+1}\otimes f_{m+2}\otimes_{m} 1\otimes_{m-1} 1\otimes\cdots\otimes 1,$$ 
and the $f_{m+2}$ can be moved to the right since it is invariant under the operation of all $s_j$ for $j\leq m$. Hence the tensor is equal to 
$$1\otimes_{\mathcal{O}(X_\ell)} 1\otimes_{i_{\ell-1}} 1\otimes \cdots\otimes 1\otimes_{m+1}\otimes 1\otimes_{m} 1\otimes_{m-1} 1\otimes\cdots \otimes1\otimes f_{m+2},$$ and $m+2\leq i_1-2(\ell-2)$, which concludes.
\end{proof}
\noindent \textit{End of the proof of the proposition}. Using the above lemma we can move our $f_{\ell}$'s in the $\mathcal{O}(W_\ell)$ components of our bimodule $B$ to the right inductively, begining from the left with $\ell=k$ by moving $f_\ell$ to the right in the $\mathcal{O}(W_{{\ell}-1})$ component and so on.
\end{proof}
We now consider the indecomposibility of a slightly more general family of bimodules. 
\begin{defn}\label{def:gi}
A fully commutative bimodule associated to a sequence $$i_k\cdots j_k\cdots i_1\cdots j_1$$ will be called a \textbf{generalized intertwined bimodule} if the following condition holds : each set $\{i_\ell,\dots, j_\ell\}$ contains a nonempty subset $S_\ell$ of cardinal at most two such that the following inductive condition is satisfied : $S_1=\{i_1\}$, and if $n(\ell)$ is the lowest index in $S_\ell$, then the set $\{i_{\ell+1},\dots, j_{\ell+1}\}$ contains the index $n(\ell)-1$ and we put
$$
S_{\ell+1} = \left\{
    \begin{array}{ll}
        \{n(\ell)-1\} & \mbox{if } n(\ell)-1=j_{\ell+1} \\
        \{n(\ell)-2, n(\ell)-1\} & \mbox{otherwise.}
    \end{array}
\right.
$$
The union of the sets $S_\ell$ for $\ell=1,\dots,k$ is called the set of \textbf{intertwining indices} of the corresponding sequence or bimodule.
\end{defn}
\begin{exple}
In case $n\geq 9$, the bimodules associated to the following sequences 
$$(\red1\black)(\red32\black)(\red4\black)(7\red65\black)(\red87\black)(\red9\black),~(\red1\black)(\red2\black)(\red43\black)(7\red65\black4)(\red87\black65)(\red9\black876),~(\red87\black)(\red9\black)$$are generalized intertwined bimodules ; the indices belonging to the set $S_\ell$ are written in red. The bimodules associated to the sequences 
$$(1)(32)(65)(87)(9),~(7)(98)$$
\noindent are not generalized intertwined bimodules.
\end{exple}
The following technical result will allow us to use the same kind of arguments as for intertwined bimodules to show indecomposability ; for this, we order the set of simple reflections by setting $s_i<s_j$ if and only if $i<j$, for $i, j\in [1,n]$. 
\begin{lem}\label{lem:indiceminimal}
Let $i_k\cdots j_k\cdots i_1\cdots j_1$ be a sequence defining a generalized intertwined bimodule with corresponding variety $W\in\mathcal{V}_n$. Then 
\begin{enumerate}
	\item The smallest index in $\mathrm{supp}(T_W)$ is equal to $n(k)$ as in definition \ref{def:gi},
	\item The lowest simple reflection occuring in $T_W$ is $s_{i_k}$.  
\end{enumerate} 
\end{lem}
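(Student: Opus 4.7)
The plan is to prove both statements simultaneously by induction on the rank $k$ of the sequence. The base case $k=1$ follows immediately from Lemma \ref{lem:suitecroissante}: $W = V_{i_1}$, so $T_W = \{s_{i_1}\}$; by definition $n(1) = i_1 = i_k$, and both claims hold.

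For the inductive step, let $W'$ be the variety associated to the subsequence $i_{k-1}\cdots j_1$ of rank $k-1$. The induction hypothesis yields that $n(k-1)$ is the smallest index in $\mathrm{supp}(T_{W'})$ and that $s_{i_{k-1}}$ is the lowest simple reflection in $T_{W'}$. Since $T_{W'}$ is a noncrossing commuting set, each index in its support lies in a unique reflection, so there is a unique $t = (n(k-1), p) \in T_{W'}$ containing $n(k-1)$. The variety $W$ is obtained from $W'$ by successively applying $s_{j_k}, s_{j_k+1},\ldots, s_{i_k}$, and I would track this evolution step by step using Lemma \ref{lem:reflexions-recurrence}.

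The analysis splits into two sub-cases following Definition \ref{def:gi}. If $j_k = n(k-1)-1$ (so $n(k) = n(k-1)-1$), then $s_{j_k}$ fails to commute with exactly one element of $T_{W'}$, namely $t$, and rule 2 of the lemma replaces $t$ by $s_{j_k}$, giving the new minimum $j_k = n(k)$. If $j_k < n(k-1)-1$ (so $n(k) = n(k-1)-2$), then $s_{j_k}$ commutes with all of $T_{W'}$ and is merely appended (rule 1); the subsequent applications with indices strictly below $n(k-1)$ keep the minimum unchanged, and the step $s_{n(k-1)-1}$ then lowers it to $n(k-1)-2 = n(k)$ by interacting with $t$ and with the previously introduced simple reflection. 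In both sub-cases, the remaining applications $s_r$ for $r$ up to $i_k$ involve only indices $\geq n(k)$, so by Lemma \ref{lem:reflexions-recurrence} they cannot decrease the minimum further; this establishes claim 1. For claim 2, the last applied simple reflection $s_{i_k}$ survives in $T_W$ since no further step is taken; moreover, each intermediate simple reflection $s_{r}$ ($j_k \leq r < i_k$) is removed at the next step by rule 2 or rule 3 of Lemma \ref{lem:reflexions-recurrence} (because $s_{r+1}$ fails to commute with $s_r$), so no simple reflection with index strictly less than $i_k$ remains in $T_W$, while $s_{i_{k-1}}$ and any other simple reflection of $T_{W'}$ has index strictly greater than $i_k$ and is untouched by the operations in $[j_k,i_k]$. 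The main obstacle is the delicate combinatorial bookkeeping of which reflections are added, removed, or modified at each of the $i_k - j_k + 1$ steps; a visualization by arcs on a line, in the spirit of Figure \ref{figure:proc}, makes the tracking tractable and confirms the claimed behaviour in each intermediate configuration.
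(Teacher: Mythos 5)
Your overall plan — induction on $k$, splitting into the sub-cases $|S_k|=1$ and $|S_k|=2$, and tracking the dense set via Lemma \ref{lem:reflexions-recurrence} — is the same as the paper's. However, the key step is exactly the one you delegate to ``delicate combinatorial bookkeeping,'' and as written your argument has concrete gaps.

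For claim 1, you only argue that the subsequent applications $s_r$ cannot \emph{decrease} the minimum. The danger is that they could \emph{increase} it: if the unique reflection containing the current minimum index is hit by rule 2, it is deleted outright and the minimum jumps up. Ruling this out is the whole difficulty. In the paper this is done by (a) treating separately the case where $t=(n(k-1),p)$ is a simple reflection — in which case the induction hypothesis on claim 2 forces $n(k-1)=i_{k-1}$ and hence $i_k=j_k$, so there are no further applications — and (b) establishing the structural fact, again from the induction hypothesis on claim 2, that in $T_{j_k\cdot W'}$ every reflection with an index in $[j_k+2,i_{k-1}-1]$ has its other index $\geq i_{k-1}+2$. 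That structural fact is what guarantees rule 3, not rule 2, applies at each subsequent step so that the minimum-bearing reflection is replaced by a new one of the form $(n(k),q)$ rather than destroyed, and it also controls which simple reflections can appear and shows the lowest becomes $s_{i_k}$. Neither of these ingredients appears in your argument.

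Your description of the $|S_k|=2$ sub-case is also mechanically incorrect: after appending $s_{j_k}$ by rule 1, each of $s_{j_k+1},\ldots,s_{n(k)}$ acts by rule 2 (the only non-commuting reflection in play is the previous simple reflection), so the minimum does not stay unchanged — it increases one step at a time from $j_k$ to $n(k)$. And $s_{n(k-1)-1}=s_{n(k)+1}$ does not ``lower'' it; by that point the minimum is already $n(k)$ and rule 3 then preserves it. The endpoint you reach is right, but the reasoning along the way misdescribes what Lemma \ref{lem:reflexions-recurrence} produces. Finally, for claim 2, you should also verify that the reflections $tt'st't$ produced by rule 3 are never simple reflections of small index — otherwise ``no new simple reflection below $i_k$ appears'' is unsupported.
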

\begin{proof}
We use induction on $k$ ; if $k=1$, the result is trivially true since $T_W=\{s_{i_1}\}$ and $n(1)=i_1$. Now suppose $k>1$. By induction the smallest index occuring in $T_{W'}$ where $W'$ is associated to the sequence $i_{k-1}\cdots j_{k-1}\cdots i_1\cdots j_1$ is $n(k-1)$ (in particular there exists $j>n(k-1)$ such that $(n(k-1),j)\in T_{W'}$) and the lowest simple reflection occuring in $T_{W'}$ is $s_{i_{k-1}}$. 

First consider the case $|S_k|=1$, we then have $n(k)=j_k=n(k-1)-1$. We get $T_{j_k\cdot W'}=(T_{W'}\backslash\{(n(k-1),j)\})\cup\{s_{n(k)}\}$. If $(n(k-1), j)$ is simple, then $n(k-1)=i_{k-1}$ and $i_k=j_k$ since $j_k=i_{k-1}-1$ and $j_k\leq i_k<i_{k-1}$ ; in that case we are done. Otherwise, the two first blocks (from the left) of the set $T_{j_k\cdot W'}$ have the form given by figure \ref{figure:forme}, 
\begin{figure}[h!]
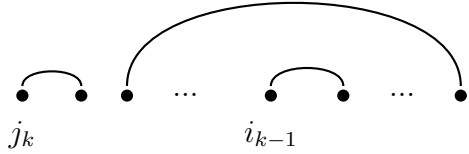

\begin{center}
\vspace{1cm}
\begin{psmatrix}[colsep=0.6,rowsep=0.1]
\pscircle*{0.08} & \pscircle*{0.08} & \pscircle*{0.08} & ... & 
\pscircle*{0.08} & \pscircle*{0.08} & ... & \pscircle*{0.08}\\
$j_k$ & & & & $i_{k-1}$ & & &
\end{psmatrix}
\ncarc[arcangle=90]{1,1}{1,2}
\ncarc[arcangle=90]{1,3}{1,8}
\ncarc[arcangle=90]{1,5}{1,6}
\end{center}
\caption{The two first blocks of the set $T_{j_k\cdot W}$.}
\label{figure:forme}
\end{figure}
where all reflections having in their supports an index in $[j_k+2, i_{k-1}-1]$ must have the other index of their support bigger than or equal to $i_{k-1}+2$ (otherwise $s_{i_{k-1}}$ would not be the lowest simple reflection in $T_{W'}$). Thanks to this property together with lemma \ref{lem:reflexions-recurrence} and the fact that $i_k<i_{k-1}$, applying $i_k\cdots (j_k+1)$ to $j_k\cdot W'$ does not change the support of the corresponding dense set and gives a set whose lowest simple reflection is $s_{i_k}$ (see figure \ref{figure:grossesref} for an illustration: in that case $n(k)=j_k$). 
\begin{figure}[h!]
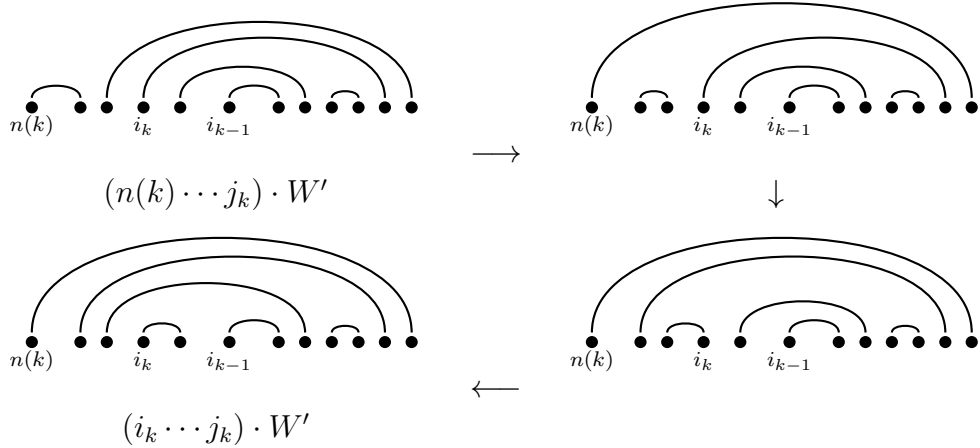

\begin{center}
\vspace{1.2cm}
\begin{tabular}{ccc}
\begin{tabular}{c}
\begin{psmatrix}[colsep=0.35,rowsep=0.1]
\pscircle*{0.08} & \pscircle*{0.08} & \pscircle*{0.08} & \pscircle*{0.08} & 
\pscircle*{0.08} & \pscircle*{0.08} & \pscircle*{0.08} & \pscircle*{0.08} & \pscircle*{0.08} & \pscircle*{0.08} & \pscircle*{0.08} & \pscircle*{0.08}\\
\scriptsize{$n(k)$} & & & \scriptsize{$i_k$} & & \scriptsize{$i_{k-1}$} & & &\\
\end{psmatrix}
\ncarc[arcangle=90]{1,1}{1,2}
\ncarc[arcangle=90]{1,3}{1,12}
\ncarc[arcangle=90]{1,4}{1,11}
\ncarc[arcangle=90]{1,5}{1,8}
\ncarc[arcangle=90]{1,6}{1,7}
\ncarc[arcangle=90]{1,9}{1,10}\\
\\
$(n(k)\cdots j_k)\cdot W'$\\
\end{tabular}
& $\longrightarrow$ & 
\begin{tabular}{c}
\begin{psmatrix}[colsep=0.35,rowsep=0.1]
\pscircle*{0.08} & \pscircle*{0.08} & \pscircle*{0.08} & \pscircle*{0.08} & 
\pscircle*{0.08} & \pscircle*{0.08} & \pscircle*{0.08} & \pscircle*{0.08} & \pscircle*{0.08} & \pscircle*{0.08} & \pscircle*{0.08} & \pscircle*{0.08}\\
\scriptsize{$n(k)$} & & & \scriptsize{$i_k$} & & \scriptsize{$i_{k-1}$} & & &\\
\end{psmatrix}
\ncarc[arcangle=90]{1,1}{1,12}
\ncarc[arcangle=90]{1,2}{1,3}
\ncarc[arcangle=90]{1,4}{1,11}
\ncarc[arcangle=90]{1,5}{1,8}
\ncarc[arcangle=90]{1,6}{1,7}
\ncarc[arcangle=90]{1,9}{1,10}\\
$~$\\
$\downarrow$\\
\end{tabular}\\
& & \\
& & \\
& & \\
\begin{tabular}{c}
\begin{psmatrix}[colsep=0.35,rowsep=0.1]
\pscircle*{0.08} & \pscircle*{0.08} & \pscircle*{0.08} & \pscircle*{0.08} & 
\pscircle*{0.08} & \pscircle*{0.08} & \pscircle*{0.08} & \pscircle*{0.08} & \pscircle*{0.08} & \pscircle*{0.08} & \pscircle*{0.08} & \pscircle*{0.08}\\
\scriptsize{$n(k)$} & & & \scriptsize{$i_k$} & & \scriptsize{$i_{k-1}$} & & &\\
\end{psmatrix}
\ncarc[arcangle=90]{1,1}{1,12}
\ncarc[arcangle=90]{1,2}{1,11}
\ncarc[arcangle=90]{1,4}{1,5}
\ncarc[arcangle=90]{1,3}{1,8}
\ncarc[arcangle=90]{1,6}{1,7}
\ncarc[arcangle=90]{1,9}{1,10}\\
$~$\\
$(i_k\cdots j_k)\cdot W'$\\
\end{tabular}
& $\longleftarrow$ & 
\begin{tabular}{c}
\begin{psmatrix}[colsep=0.35,rowsep=0.1]
\pscircle*{0.08} & \pscircle*{0.08} & \pscircle*{0.08} & \pscircle*{0.08} & 
\pscircle*{0.08} & \pscircle*{0.08} & \pscircle*{0.08} & \pscircle*{0.08} & \pscircle*{0.08} & \pscircle*{0.08} & \pscircle*{0.08} & \pscircle*{0.08}\\
\scriptsize{$n(k)$} & & & \scriptsize{$i_k$} & & \scriptsize{$i_{k-1}$} & & &\\
\end{psmatrix}
\ncarc[arcangle=90]{1,1}{1,12}
\ncarc[arcangle=90]{1,2}{1,11}
\ncarc[arcangle=90]{1,3}{1,4}
\ncarc[arcangle=90]{1,5}{1,8}
\ncarc[arcangle=90]{1,6}{1,7}
\ncarc[arcangle=90]{1,9}{1,10}\\
$~$\\
$~$\\
\end{tabular}
\end{tabular}
\caption{Example of the process of applying the sequence $i_k\cdots (n(k)+1)$ to $(n(k)\cdots j_k)\cdot W'$; in case $|S_k|=1$ we have $n(k)=j_k$.}
\label{figure:grossesref}
\end{center}
\end{figure}

Now suppose $|S_k|=2$ ; applying the sequence $n(k)\cdots(j_k+1)j_k$ to $W'$ we get a variety $W''$ with corresponding set equal to $T_{W'}\cup\{s_{n(k)}\}$ since $n(k-1)=n(k)+2$ is the lowest index in $T_{W'}$. We can then argue exactly as in the first case to get the conclusion (see figure \ref{figure:grossesref}).   
\end{proof}
\begin{prop}\label{prop:gi}
Let $B$ be a generalized intertwined bimodule with associated sequence $i_k\cdots j_k\cdots i_1\cdots j_1$. Then $B$ is indecomposable. More precisely, when writing $B$ in the form 
$$B(k)\otimes_{R_{j_k}} \mathcal{O}(W_{k})\otimes_{\mathcal{O}(X_k)} B(k-1)\otimes\cdots \otimes B(2)\otimes_{R_{j_2}}\mathcal{O}(W_2)\otimes_{R_{i_1}} B(1)$$
where we made the same choice of brackets as in proposition \ref{prop:entrelace}, with $X_\ell$ the variety associated to the subsequence $i_{\ell-1}\cdots j_{\ell-1}\cdots i_1\cdots j_1$ and $W_\ell=X_\ell\cap V_{j_\ell}$, any tensor in $B$ can be written as a sum of elements of the form
$$a\cdot 1\otimes 1\otimes \cdots \otimes 1\otimes 1\cdot p(f_1,\dots, f_{i_1})$$
\noindent where the $\cdot$ holds for the operation of $\bar{R}$ on both sides and $p(f_1,\dots, f_{i_1})$ is a polynomial in $f_1,f_2,\dots, f_{i_1}$.

Moreover if $j+2$ is smaller than or equal to the smallest index in $S_k$, then there exists a polynomial $p(f_1,\dots, f_{i_1})$ such that
$$f_j\cdot 1\otimes 1\otimes\cdots 1\otimes 1=1\otimes 1\otimes\cdots 1\otimes 1\cdot p(f_1,\cdots f_{i_1}).$$
\end{prop}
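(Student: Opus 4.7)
The plan is to prove both assertions (indecomposability and the moreover pushing statement) simultaneously by induction on the rank $k$ of the sequence. In the base case $k=1$ the bimodule $B=B(1)$ is just the rank-one bimodule of Lemma \ref{lem:indec1}, so indecomposability is immediate, while the moreover statement follows from a three-case walk exactly as in Lemma \ref{lem:tech}: either $f_j$ is $s_m$-invariant for every index $m$ appearing in $i_1\cdots j_1$ and can be passed directly to the right, or one uses $f_p+f_{p+1}=0$ in $\mathcal{O}(V_p\cap V_{p+1})$ together with the $s_{p+1}$-invariance of $f_p+f_{p+1}+f_{p+2}$ in $\mathcal{O}(V_p\cap V_{p+2})$ to walk $f_j$ across successive tensor factors, ending with a polynomial in the $f_i$ with $i\leq i_1$.

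For the inductive step I would split off the last block, writing
$$B \cong B(k)\otimes_{R_{j_k}}\mathcal{O}(W_k)\otimes_{\mathcal{O}(X_k)} B'$$
where $B'=B(k-1)\ast\cdots\ast B(1)$. The sequence defining $B'$ retains the intertwining data $S_1,\dots,S_{k-1}$, so $B'$ is generalized intertwined of rank $k-1$ and both conclusions of the proposition hold for $B'$ by induction. Using Lemma \ref{lem:decomposer} on $B(k)$ and the decomposition $\mathcal{O}(W_k)=\mathcal{O}(W_k)^{s_{j_k}}\oplus\mathcal{O}(W_k)^{s_{j_k}}f_{j_k}$ coming from $s_{j_k}$-invariance of $W_k$, every tensor in $B$ becomes a sum of terms of the form $a\cdot 1_k\otimes x\otimes \beta$ with $x\in\{1,f_{j_k}\}$ and $\beta$ a tensor in $B'$. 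Applying the inductive hypothesis to $\beta$ rewrites each such term as $a\cdot 1_k\otimes x\otimes(1\otimes\cdots\otimes 1)\cdot p(f_1,\dots,f_{i_1})$; it then remains to eliminate the factor $x=f_{j_k}$.

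For this last step I would invoke the moreover conclusion of the induction hypothesis applied to $B'$ with $j=j_k$: since $n(k-1)\geq n(k)+1\geq j_k+1$, the condition $j_k+2\leq n(k-1)$ holds except in the boundary case $|S_k|=1$ with $n(k)=j_k$, which forces $n(k-1)=j_k+1$. In that boundary case one treats $f_{j_k}$ directly using the decomposition of $\mathcal{O}(W_k)$ above together with the fact that $s_{j_k}=s_{n(k-1)-1}$ interacts with the support of $T_{W'}$ only through $s_{n(k-1)}$ (by Lemma \ref{lem:indiceminimal}), so that $f_{j_k}\otimes 1_{k-1}$ can be rewritten as $1\otimes(1_{k-1}\cdot q)$ for some polynomial $q$ by the three-case walk of Lemma \ref{lem:tech}. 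The moreover statement at rank $k$ then follows from the same argument, combining commutation (using Lemma \ref{lem:indiceminimal} together with the assumption $j+2\leq n(k)$) with the induction moreover at rank $k-1$.

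I expect the main obstacle to lie in the bookkeeping for the boundary case $|S_k|=1$, and more generally in verifying that every walk of $f$-values rightward across $B'$ produces only $f_i$ with $i\leq i_1$. A careful case split on the size of $S_k$ and on whether $j_k$ equals $n(k-1)-1$ or $n(k-1)-2$ should make the induction go through, but the number of subcases and the need to control which $f_i$ can appear make this the technical heart of the argument.
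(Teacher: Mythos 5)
Your inductive strategy (peel off the top block $B(k)$, apply the proposition to the rank-$(k-1)$ tail $B'$, then push the leftover $f_{j_k}$ rightward through $B'$ using the moreover clause) is a genuinely different organization from the paper, which instead reduces all $\mathcal{O}(W_\ell)$-components to $1$ or $f_{j_\ell}$ in one pass and then walks the $f$'s rightward position by position. The structural idea is sound, but there is a concrete error at its foundation.

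You assert the decomposition $\mathcal{O}(W_k)=\mathcal{O}(W_k)^{s_{j_k}}\oplus\mathcal{O}(W_k)^{s_{j_k}}f_{j_k}$ ``coming from $s_{j_k}$-invariance of $W_k$'' and use it in both the generic step and your boundary case. This invariance fails exactly when $|S_k|=1$. In that case $j_k=n(k-1)-1$, and by Lemma \ref{lem:indiceminimal}(1) the smallest index in $\mathrm{supp}(T_{X_k})$ is $n(k-1)=j_k+1$; so $T_{X_k}$ contains a reflection not commuting with $s_{j_k}$, which by Corollary \ref{cor:ni} means neither $X_k$ nor $W_k=X_k\cap V_{j_k}$ is $s_{j_k}$-invariant. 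The eigenspace decomposition of $\mathcal{O}(W_k)$ is therefore unavailable precisely in the boundary case you flag, and your proposed fix there (a direct three-case walk of $f_{j_k}$ ``using the decomposition of $\mathcal{O}(W_k)$ above'') relies on the same false premise.

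The correct tool when $|S_k|=1$ is Remark \ref{rmq:noninvariant} (together with Corollary \ref{cor:ni}): since $W_k$ is not $s_{j_k}$-invariant, the restriction $R_{j_k}^{s_{j_k}}\twoheadrightarrow\mathcal{O}(W_k)$ is onto, so every element in that component is the restriction of an $s_{j_k}$-invariant function and can be moved wholesale to the left; no $f_{j_k}$-term ever appears. With this repair the boundary case of your induction becomes trivial rather than delicate, and the only place you need to eliminate $x=f_{j_k}$ is $|S_k|=2$, where $j_k+2\le n(k-1)$ does hold and your invocation of the inductive moreover to $B'$ is legitimate. So the proposal is fixable, and once fixed the inductive route is arguably cleaner than the paper's simultaneous analysis, but as written the key structural claim about $\mathcal{O}(W_k)$ is false in the very case you identify as the obstacle.
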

\begin{proof}
We first consider in which case the variety $X_\ell$ is $s_{j_{\ell}}$-invariant ; if $|S_{\ell}|=2$, we have that $j_\ell\leq n(\ell-1)-2$ by definition \ref{def:gi} hence $X_\ell$ is $s_{j_\ell}$-invariant by the first assertion of lemma \ref{lem:indiceminimal} together with proposition \ref{prop:reflexion}. If $|S_\ell|=1$, then $j_{\ell}=n(\ell-1)-1$ by definition \ref{def:gi} hence $X_{\ell}$ is not $s_{j_\ell}$-invariant by the first assertion of lemma \ref{lem:indiceminimal} together with proposition \ref{prop:reflexion}. Therefore in case $|S_\ell|=2$ one can decompose
$$\mathcal{O}(W_{\ell})=\mathcal{O}(W_{\ell})^{s_{j_{\ell}} }\oplus \mathcal{O}(W_{\ell})^{s_{j_{\ell}} } f_{j_{\ell}}|_{W_{\ell}},$$ hence for each $\ell$ such that $|S_\ell|=2$ we can decompose the $\mathcal{O}(W_\ell)$-component of any tensor in $B$ and move the invariant parts to the left in $B(\ell)$ and then in $\mathcal{O}(W_{\ell+1})$ as we did in \ref{prop:entrelace} for the interwtined case. 
In the case where $|S_\ell|=1$, we have seen that $X_\ell$ is not $s_{j_{\ell}}$-invariant. Thanks to corollary \ref{cor:ni} together with remark \ref{rmq:noninvariant}, $R_{j_{\ell}}^{s_{j_{\ell}}}\twoheadrightarrow \mathcal{O}(W_\ell)$, hence the $\mathcal{O}(W_\ell)$ component of any tensor in $B$ can be moved to the left in $B(\ell)$ and then in the $\mathcal{O}(W_{\ell+1})$-component. As a consequence, any tensor $b\in B$ can be written as a sum $\sum_i a_i\cdot t_i$, where $a_i\in\bar{R}$ and $t_i$ are tensors in $B$ with $f_{j_{\ell}}$ or $1$ in the $\mathcal{O}(W_{\ell})$-component for $\ell$ such that $|S_\ell|=2$, with $1$ in $\mathcal{O}(W_{\ell})$-component for $\ell$ such that $|S_\ell|=1$ and with $1$ in the components coming from the bimodules $B(\ell)$. It remains to show that  if $|S_{\ell}|=2$, the $f_{j_{\ell}}$ in the $\mathcal{O}(W_{\ell})$-components can be "moved to the right". 

Now we consider an element $f_j$ in the $\mathcal{O}(W_\ell)$-component of one of the $t_i$, with $j\leq n(\ell)$ as we did at the end of the proof of \ref{lem:tech}. If $|S_{\ell-1}|=1$, then the only index in $S_{\ell-1}$ is $j_{\ell-1}$ and one has $j_{\ell-1}\geq j+2$ since $|S_{\ell}|=2$. In that case, any index occuring in the sequence $i_{\ell-1}\cdots j_{\ell-1}\cdots i_1\cdots j_1$ is distant from $j$ and hence $f_j$ can be moved in the very first component on the right of our tensor product (that is $\mathcal{O}(W_1)=R_{j_1}$). The other case is the case where $|S_{\ell-1}|=2$. Since $i_{\ell-1}>n(\ell)+1$, $f_j$ is $s_{i_{\ell-1}}$-invariant and hence can be moved to the right in $B(\ell-1)$. We then argue exactly as in lemma \ref{lem:tech}, distinguishing the three cases: $j<j_{\ell-1}-1$, $j=j_{\ell-1}-1$ and $j\geq j_{\ell-1}$, to conclude that we can "move" our $f_j$ to the right in the $\mathcal{O}(W_{\ell-1})$-component where we obtain a sum of $f_{j'}$ for $j'\leq j+2$. But since $|S_{\ell}|=2$, $j'\leq n(\ell-1)$. Hence we can inductively "move" the $f_j$'s to the $\mathcal{O}(W_m)$-component with $m<\ell$ as far as $|S_i|=2$ for each $i\in[m, \ell-1]$  obtaining in that component a polynomial in $p(f_1,\cdots, f_{n(m)})$ and if then $|S(m-1)|=1$, we apply the first case to move our polynomial in the very first component on the right of the tensor product (that is $\mathcal{O}(W_1)=R_{j_1}$). Hence we can inductively move any $f_j$ to the right and one obtains in that component polynomials in the $f_i$'s for $i$ smaller than or equal to $n(1)=i_1$. This also shows the last statement since if $j+2$ is less than or equal to $n(k)$, then arguing as above our $f_j$ lying in the very first component on the left of the tensor product can be moved in the $\mathcal{O}(W_k)$-component and one obtains a sum of $f_{j'}$ for $j'$ less than or equal to $j+2\leq n(k)$. 
\end{proof}
\noindent We have all the required tools to prove :
\begin{thm}\label{thm:youpi}
Let $B$ be a fully commutative bimodule. Then $B$ is indecomposable in $\bar{R}-\mathrm{mod}_{\mathbb{Z}}-\bar{R}$. 
\end{thm}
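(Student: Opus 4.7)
The strategy is to deduce the theorem from Proposition \ref{prop:gi} by realizing every fully commutative bimodule as an iterated $\ast$-product of generalized intertwined bimodules whose index supports are pairwise distant.

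First, I would associate to the sequence $\sigma=i_k\cdots j_k\cdots i_1\cdots j_1$ defining $B$ a canonical decomposition $\sigma=\sigma^{(r)}\cdots\sigma^{(1)}$ in which each $\sigma^{(p)}$ is the sequence of a generalized intertwined bimodule $B^{(p)}$ in the sense of Definition \ref{def:gi}, obtained by greedily extending from the right as long as the intertwining condition holds. When that condition first fails at some step $\ell$, i.e. $n(\ell)-1\notin\{i_{\ell+1},\dots,j_{\ell+1}\}$, the fact that $i_{\ell+1}<i_{\ell}$ and $j_{\ell+1}<j_{\ell}$ forces all indices appearing in the subsequent groups of $\sigma$ to be at most $n(\ell)-2$, hence distant from every index in the current block. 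Iterating this procedure gives the claimed decomposition with the supports of distinct $\sigma^{(p)}$'s pairwise distant.

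Next, since distant indices yield commuting bimodules by Theorem \ref{thm:relations}, and since Theorem \ref{thm:asso} guarantees associativity, one obtains a well defined isomorphism $B\cong B^{(r)}\ast B^{(r-1)}\ast\cdots\ast B^{(1)}$. Proposition \ref{prop:gi} applied to each component tells us that any tensor inside $B^{(p)}$ can be rewritten as $a\cdot(1\otimes\cdots\otimes 1)\cdot p(f_j\mid j\leq i^{(p)}_1)$ where $i^{(p)}_1$ is the largest index of $\sigma^{(p)}$, $a\in\bar{R}$ and $p$ is a polynomial. The key step is then to propagate this expression across the whole of $B$. Given a tensor in $B$, I would handle the components from the leftmost to the rightmost: the polynomial on the right of $B^{(p)}$ involves only $f_j$'s with $j$ in the support of $\sigma^{(p)}$ (up to $i^{(p)}_1$), and these indices are distant from every index appearing in $B^{(p-1)}$; consequently each such $f_j$ commutes with every ring of invariants occurring in the internal tensor products of $B^{(p-1)}$, so the polynomial passes freely through $B^{(p-1)}$ and, by the last statement of Proposition \ref{prop:gi} applied to $B^{(p-1)}$, can be absorbed into a polynomial sitting on the far right. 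Iterating, any tensor in $B$ can be written in the form $a\cdot(1\otimes\cdots\otimes 1)\cdot q$ with $a,q\in\bar{R}$.

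In particular $B$ is generated as a graded $(\bar{R},\bar{R})$-bimodule by the degree zero element $1\otimes\cdots\otimes 1$, and the degree zero component of $B$ is one-dimensional. Any idempotent $e\in\mathrm{End}_{\bar{R}-\mathrm{mod}_{\mathbb{Z}}-\bar{R}}(B)$ therefore sends $1\otimes\cdots\otimes 1$ to a scalar multiple of itself, hence is either $0$ or $\mathrm{id}_B$, proving indecomposability. The main obstacle is justifying the propagation across component boundaries: one must verify that the polynomials $p(f_j)$ appearing on the right of one generalized intertwined component really commute through the entire next component and then get absorbed via Proposition \ref{prop:gi}, which requires carefully tracking the indices produced and exploiting the distance gaps in the spirit of Lemma \ref{lem:tech} and the end of the proof of Proposition \ref{prop:gi}.
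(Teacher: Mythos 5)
Your proposal follows the paper's overall strategy: greedy decomposition of the normal form into generalized intertwined blocks, iterated application of Proposition \ref{prop:gi} to exhibit a degree-zero generator, and the observation that a graded bimodule generated by a degree-zero element with one-dimensional degree-zero component is indecomposable. But a central justification is incorrect, and as stated the propagation step does not go through.

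You assert that once the greedy extension fails at step $\ell$ all subsequent indices are at most $n(\ell)-2$, ``hence distant from every index in the current block''. The first part is right; the second is false. When $|S_\ell|=2$ the block $\sigma^{(1)}$ may contain the index $n(\ell)-1$, since $j_\ell$ can be strictly smaller than $n(\ell)$. Concretely, take the normal form $(21)(543)(6)$ of $s_2 s_1 s_5 s_4 s_3 s_6$: this is not itself generalized intertwined (its greedy extension has $n(2)=4$, but $\{1,2\}$ does not contain $3$), so the algorithm produces $\sigma^{(1)}=(543)(6)$ with intertwining indices $\{4,5,6\}$ and $\sigma^{(2)}=(21)$. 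Here $3$ occurs in $\sigma^{(1)}$ and $2$ occurs in $\sigma^{(2)}$, and $|3-2|=1$. So $f_2$ does \emph{not} commute through the tensor factors over $\bar{R}^{s_3}$, $\bar{R}^{s_4}$, etc.\ inside $B^{(1)}$, and the claim that the right-hand polynomial ``passes freely through'' the adjacent block fails.

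What actually makes the propagation work, and what the paper uses, is the arithmetic estimate supplied by the maximality of the greedy blocks: if $n_j$ denotes the largest index of $\sigma^{(j)}$ (for $j\geq 2$) then $n_j + 2$ is less than or equal to the smallest intertwining index of $\sigma^{(j-1)}$. This is exactly the hypothesis of the last statement of Proposition \ref{prop:gi}: it lets one move $f_{n_j}$, and a fortiori any $f_i$ with $i\leq n_j$, across the block $B^{(j-1)}$. Crucially the move is not a commutation; it replaces the element by a different polynomial (via manipulations in the spirit of Lemma \ref{lem:tech}) whose indices remain small enough to repeat the step at the next boundary. Replacing your incorrect distance claim by this maximality estimate turns your outline into the paper's proof.
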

\begin{proof}
We consider the sequence $i_k\cdots j_k\cdots i_1\cdots j_1$ our bimodule is associated to. We consider the biggest index $\ell$ such that the bimodule associated to the subsequence 
$\mathrm{seq_1}=i_\ell\cdots j_\ell\cdots i_1\cdots j_1$ is a generalized intertwined bimodule and write $G(1)$ for the corresponding bimodule. Then one can do the same with the subsequence $i_k\cdots j_k\cdots i_{\ell+1}\cdots j_{\ell+1}$ to obtain a generalized intertwined bimodule $G(2)$ associated to a subsequence $\mathrm{seq_2}$. At the end of the process we obtain a sequence $G(1),\dots, G(m)$ of intertwined bimodules associated to subsequences $\mathrm{seq}_1,\dots,\mathrm{seq}_m$ such that 
$$B\cong G(m)\ast G(m-1)\ast\cdots\ast G(2)\ast G(1)$$
and $\mathrm{seq}=\mathrm{seq}_m\cdots\mathrm{seq}_2\mathrm{seq}_1$. We compute the various $\ast$ products occuring in each of the bimodule $G(i)$ with the same choice of brackets as in propositions \ref{prop:entrelace} and \ref{prop:gi}; we then compute the above product "from the right", i.e. with the following choice of brackets :
$$G(m)\ast (G(m-1)\ast(\cdots (G(3)\ast (G(2)\ast G(1)))\cdots)).$$
By maximality of the rank of the subsequence $i_\ell\cdots j_\ell\cdots i_1\cdots j_1$ defining $G(1)$, $j_{\ell+1}\leq i_{\ell+1}<n(\ell)-1$. But we know from lemma \ref{lem:indiceminimal} that the lowest index in the support of $T_{U_2}$ where $U_2$ is the variety associated to $\mathrm{seq}_1$ is precisely $n(\ell)$. The variety $Z_2$ occuring when computing the $\ast$ product between $G(1)$ and $G(2)$, which is equal to $U_2\cap V_{j_{\ell+1}}$, is then $s_{j_{\ell+1}}$-invariant. Moreover, since $i_{\ell+1}$ is the biggest index occuring in $\mathrm{seq}_2$, one has that
$$T_{W_{\mathrm{seq}_2\mathrm{seq}_1}}=T_{W_{\mathrm{seq}_1}}\cup T_{W_{\mathrm{seq}_2}}$$
and the same holds using induction when replacing $1$ by $i$ for $1<i<m$. Hence our bimodule is isomorphic to 
$$G(m)\otimes_{R_{k_m}} \mathcal{O}(Z_m)\otimes_{\mathcal{O}(U_m)} G(m-1)\otimes\cdots\otimes_{R_{k_2}} \mathcal{O}(Z_2)\otimes_{\mathcal{O}(U_2)} G(1)$$
where $U_j$ is the variety associated to the sequence $\mathrm{seq}_{j-1}\cdots\mathrm{seq}_2 \mathrm{seq}_1$, $k_j$ is the last index of the sequence $\mathrm{seq}_j$ and $Z_j=U_j\cap V_{k_j}$ is $s_{k_j}$-invariant. Now consider any tensor 
$$a_m\otimes_{R_{k_m}} b_m\otimes_{\mathcal{O}(U_m)} a_{m-1}\otimes\cdots\otimes_{R_{k_2}} b_2\otimes_{\mathcal{O}(U_2)} a_1$$
\noindent in the above tensor product with $a_j\in G(j)$, $b_j\in\mathcal{O}(Z_j)$. Since $R_{k_j}\twoheadrightarrow \mathcal{O}(Z_j)$ we can suppose that each $b_i$ equals $1$. Now using proposition \ref{prop:gi} inductively, beginning with $a_1$, we can rewrite our tensor as a sum of tensors of the form 
$$a\cdot 1\otimes_{R_{k_m}} p(f_1,\dots, f_{n_m})\otimes_{\mathcal{O}(U_m)} 1\otimes\cdots\otimes_{R_{k_2}} p(f_1,\dots,f_{n_2})\otimes_{\mathcal{O}(U_2)} 1\cdot p(f_1,\cdots,f_{n_1}),$$
where $n_j$ is the biggest index in the sequence $\mathrm{seq}_j$ (in particular $n_1=i_1$ and $n_2=i_{\ell+1}$. Now each $n_j+2$ is less than or equal to the smallest index in the set of intertwining indices of $\mathrm{seq}_{j-1}$ because this sequence was chosen to be maximal such that the corresponding bimodule is a generelized intertwining bimodule. Hence we can apply the last statement of proposition \ref{prop:gi} inductively, beginning from the left. This concludes
\end{proof}

\subsection{Categorification of the Kazhdan-Lusztig basis}

Notice that the category of finitely generated graded $\bar{R}$-bimodules has the Krull-Schmidt property (see \cite{S}, remark 1.3). Thanks to theorem \ref{thm:youpi}, we can extend the $\ast$ product to direct sums of fully commutative bimodules and their graded shifts by bilinearity. 

\begin{nota}
We write $\mathcal{B}_{\mathrm{TL}_n}$ for the additive monoidal category generated by $\ast$-products of fully commutative bimodules and their shifts and stable by direct sums (and direct summands, but an indecomposable direct summand of a product of shifts of fully commutative bimodules is again a shift of a fully commutative bimodule). Recall that for $w\in\mathcal{W}_c$ a fully commutative element, we write $b_w$ for the corresponding element of the Temperley-Lieb algebra and $B_w$ for the corresponding fully commutative bimodule. 
\end{nota}
\noindent Combining our efforts from the previous sections we get

\begin{thm}[\textbf{Categorification of the Kazhdan-Lusztig basis of the Temperley-Lieb algebra}]\label{thm:fin}
The category $\mathcal{B}_{\mathrm{TL}_n}$ categorifies the Kazhdan-Lusztig basis of the Temperley-Lieb algebra $\mathrm{TL}_n$. More precisely, we have an isomorphism $\mathcal{E}$ of $\mathbb{Z}[\tau, \tau^{-1}]$-algebras
$$
\mathrm{TL}_n\overset{\sim}{\longrightarrow} \left\langle \mathcal{B}_{\mathrm{TL}_n},\oplus,\ast\right\rangle,\\
$$
such that $\mathcal{E}(b_w)=B_w$, $\mathcal{E}[\tau]=\bar{R}[1]$. Here $\left\langle \cdot\right\rangle$ stands for the split Grothendieck group of the additive category (which becomes a ring with $\ast$).
\end{thm}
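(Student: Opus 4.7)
The plan is essentially to assemble the results already proved: define $\mathcal{E}$ on the presentation generators of $\mathrm{TL}_n$ by $\mathcal{E}(b_i)=[B_i]$ and $\mathcal{E}(\tau)=[\bar{R}[1]]$, extended $\mathbb{Z}[\tau,\tau^{-1}]$-linearly and multiplicatively. To see that this is a well-defined ring homomorphism, one applies Theorem \ref{thm:relations}: the three bimodule isomorphisms $B_j\ast B_i\ast B_j\cong B_j$ for $|i-j|=1$, $B_i\ast B_j\cong B_j\ast B_i$ for $|i-j|>1$ and $B_i\ast B_i\cong B_i\oplus B_i[-2]$ pass to the defining Temperley--Lieb relations in the Grothendieck group once $[B_i[-2]]=\tau^{-2}[B_i]$ is used to turn the third isomorphism into $[B_i]\ast[B_i]=(1+\tau^{-2})[B_i]$. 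Compatibility with the $\mathbb{Z}[\tau,\tau^{-1}]$-action is a separate easy check: for a fully commutative $M$ the left action of $\bar{R}$ factors through $\mathcal{O}(V_M^\ell)$, so $\bar{R}\ast M=\mathcal{O}(V_M^\ell)\otimes_{\bar{R}} M\cong M$, and therefore $\bar{R}[1]\ast M\cong M[1]$, i.e.\ $\mathcal{E}(\tau)$ really acts as the grading shift in $\langle\mathcal{B}_{\mathrm{TL}_n},\oplus,\ast\rangle$.

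Surjectivity is immediate from the definition of $\mathcal{B}_{\mathrm{TL}_n}$: every object of this category is a direct sum of shifts $B_w[k]$ of fully commutative bimodules, and each $[B_w[k]]=\tau^k\mathcal{E}(b_w)$ lies in the image. For injectivity one uses the Krull--Schmidt property for finitely generated graded $\bar{R}$-bimodules (cited via \cite{S}, remark $1.3$), which tells us that the split Grothendieck group of $\mathcal{B}_{\mathrm{TL}_n}$ is the free abelian group on isomorphism classes of indecomposables. By Theorem \ref{thm:youpi} each $B_w$ is indecomposable, so each $B_w[k]$ is too. The proof of Theorem \ref{thm:youpi} in fact shows that $B_w$ is generated as a bimodule by a degree $0$ element $1\otimes\cdots\otimes 1$, so the degree $0$ component of $B_w$ is one-dimensional and all other nonzero components live in positive degrees; comparing degrees of the lowest nonzero piece shows that $B_w[k]\cong B_{w'}[k']$ forces $k=k'$, after which Corollary \ref{cor:noniso} forces $w=w'$. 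Therefore $\{[B_w[k]]\mid w\in\mathcal{W}_c,\,k\in\mathbb{Z}\}$ is a $\mathbb{Z}$-basis of $\langle\mathcal{B}_{\mathrm{TL}_n},\oplus\rangle$, making it a free $\mathbb{Z}[\tau,\tau^{-1}]$-module with basis $\{[B_w]\}_{w\in\mathcal{W}_c}$.

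Since the Kazhdan--Lusztig basis $\{b_w\}_{w\in\mathcal{W}_c}$ is a $\mathbb{Z}[\tau,\tau^{-1}]$-basis of $\mathrm{TL}_n$, the map $\mathcal{E}$ sends basis to basis and is thus a $\mathbb{Z}[\tau,\tau^{-1}]$-module isomorphism; combined with being a ring homomorphism, it is a ring isomorphism with $\mathcal{E}(b_w)=[B_w]$. The substantive content has already been absorbed by Theorems \ref{thm:relations} and \ref{thm:youpi}; the only remaining delicate point in the assembly is the rigidity statement $B_w[k]\cong B_{w'}[k']\Rightarrow (w,k)=(w',k')$, which is what I would be most careful about, since it is what guarantees that no cancellations occur in the Grothendieck group beyond the formal identities supplied by Theorem \ref{thm:relations}.
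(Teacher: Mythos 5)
Your proof takes essentially the same approach as the paper's: well-definedness from Theorem \ref{thm:relations}, surjectivity by construction of $\mathcal{B}_{\mathrm{TL}_n}$, and injectivity by combining Krull--Schmidt with indecomposability (Theorem \ref{thm:youpi}) and pairwise non-isomorphism of the $B_w$ (Corollary \ref{cor:noniso}). You are in fact a little more careful than the paper at the one genuinely delicate point: the paper reduces injectivity to ``$w\neq w'\Rightarrow B_w\not\cong B_{w'}$'' without explicitly ruling out isomorphisms between distinct shifts of the same $B_w$, whereas your degree-zero-generator argument for the rigidity statement $B_w[k]\cong B_{w'}[k']\Rightarrow (w,k)=(w',k')$ fills in exactly that gap.
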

\begin{proof}
We know from theorem \ref{thm:relations} that the bimodules $B_w$ satisfy the Temperley-Lieb relations. This shows that we have a surjective morphism of $\mathbb{Z}[\tau, \tau^{-1}]$-algebras. In order to see that this morphism is injective, it suffices to show that if $w\neq w'$ are two fully commutative elements in $\mathcal{W}$, then the corresponding bimodules $B_{w}$ and $B_{w'}$ are nonisomorphic. This has already been proven in \ref{cor:noniso}.
\end{proof}

\end{document}